\documentclass{amsart}
\usepackage{graphicx} 

\usepackage{amsmath,graphicx}
\usepackage{adjustbox}
\usepackage{amssymb}

\usepackage{tikz, tikz-cd}
\usepackage{fullpage}
\usepackage[all]{xy}
\usepackage[margin=1in]{geometry}
\usepackage{amsthm}
\usepackage{amsfonts}
\usepackage{bbm}
\usepackage{comment}
\usepackage{amsmath}
\usepackage{amsthm,diagbox}
\usepackage{bbm}
\usepackage{bm}

\usepackage{array} 
\usepackage{color}
\usepackage[utf8]{inputenc}
\usepackage[english]{babel}
\usepackage{hyperref}
\usepackage{graphics} 
\usepackage{subcaption} 
\usepackage{thm-restate}

\newcommand{\C}{{\mathbb{C}}}

\newcommand{\F}{{\mathbb{F}}}

\newcommand{\Q}{{\mathbb{Q}}}

\newcommand{\R}{{\mathbb{R}}}
\newcommand{\Z}{{\mathbb{Z}}}

\newcommand{\im}{\text{im}}

\newcommand{\Sym}{\text{Sym}}

\newcommand{\T}{\bm{T}}

\newcommand{\FDTC}{\text{FDTC}}

\DeclareMathOperator{\HFL}{HFL}
\DeclareMathOperator{\HFK}{HFK}

\DeclareMathOperator{\HF}{HF}

\DeclareMathOperator{\AKh}{AKh}

\DeclareMathOperator{\CFL}{CFL}

\DeclareMathOperator{\rank}{rank}

\newtheorem{theorem}{Theorem}[section]
\newtheorem{lemma}[theorem]{Lemma}

\newtheorem{corollary}[theorem]{Corollary}

\newtheorem{proposition}[theorem]{Proposition}
\theoremstyle{definition}  
\newtheorem{definition} [theorem] {Definition}

\newtheorem{remark} [theorem] {Remark}
\newtheorem{question} [theorem] {Question}

\theoremstyle{definition}

\usepackage[pagewise]{lineno}

\subjclass{57K18, 	57K20}

\title{Fractional Dehn twist Coefficients and rank bounds for categorified link invariants}
\author{Fraser Binns}
\address{Department of Mathematics, Princeton University, Princeton, NJ, USA}
\email{fb1673@princeton.edu}
\author{Diana Hubbard}
\address{Brooklyn College, New York, NY, USA}
\email{diana.hubbard@brooklyn.cuny.edu}

\date{\today}
\keywords{Fractional Dehn twist coefficients, braids, fibered knots, knot Floer homology, annular Khovanov homology}

\begin{document}

\begin{abstract}
We give two new lower bounds on the rank of categorified link invariants: one on the link Floer homology of fibered links in terms of the fractional Dehn twist coefficients of their monodromies, and another, as a corollary, on the annular Khovanov homology of braid closures in terms of the fractional Dehn twist coefficient of the braid. The most important technical component of the proof is that we determine the behaviour of the link Floer homology of fibered links under adding boundary Dehn twists to their monodromies in the next to top Alexander grading.
\end{abstract}
\maketitle

\section{Introduction}\label{sec:intro}

One of the main problems in the study of knots and links is to understand what topological information a given knot or link invariant encodes. This paper explores that question for two categorified invariants, namely link Floer homology and annular Khovanov homology, by providing rank bounds for these invariants arising from a quantity related to the study of mapping class groups. \emph{Link Floer homology} is a link invariant due to Ozsv\'ath and Szab\'o~\cite{HolomorphicdiskslinkinvariantsandthemultivariableAlexanderpolynomial}, and is a vector space valued invariant of links defined using symplectic topology. \textit{Annular Khovanov homology}, due to Asaeda, Przytycki, and Sikora~\cite{asaeda_categorification_2004}, takes value in the category of triply graded vector spaces. It is a version of \emph{Khovanov homology}, which is due to Khovanov~\cite{khovanov2000categorification}, for links in the thickened annulus. Throughout this paper, we take coefficients in $\F:=\Z/2\Z$.

Throughout this paper we fix $\Sigma$  to be a smooth, connected, oriented surface with non-empty boundary. The mapping class group of $\Sigma$ is its group of orientation-preserving self-diffeomorphisms modulo isotopies that fix every boundary component pointwise. Given a knot or link $L$ in a closed, oriented three-manifold $Y$, there are two ways we can associate an element of a mapping class group to $L$:
\begin{enumerate}
\item If $L$ is fibered in $Y$, meaning that $L$ arises as the binding of an open book decomposition of $Y$, then the monodromy of the open book decomposition $\phi$ is an element of the mapping class group of the page $\Sigma$ of the open book. 
\item If $Y = S^{3}$, then the link $L$ can be represented as the closure of some braid $\beta$ (in fact, there are infinitely many such possible braids). A braid on $n$ strands is an element of the mapping class group of an $n$-punctured disk.
\end{enumerate}

Given an element $\phi$ in the mapping class group of some surface $\Sigma$ with boundary as described above, the \textit{fractional Dehn twist coefficient of $\phi$}, $\FDTC(\phi)$, is a rational number assigned to each boundary component of $\Sigma$ that, informally, measures the amount of twisting $\phi$ effects about that boundary component (see Section~\ref{sec:topprelim} for a formal definition). To denote the FDTC of $\phi$ at a specific boundary component $\partial_{i} \Sigma$, we write $\FDTC(\phi,\partial_i\Sigma$.) 

Our main result for link Floer homology gives a lower bound on the next-to-top term of link Floer homology arising from the fractional Dehn twist coefficient.

\begin{restatable}{theorem}{corHFKrank}\label{corHFKrank}
   Let $\Sigma$ be a  surface with $m$ boundary components that is neither an annulus nor a disk and $\phi:\Sigma\to\Sigma$ be a diffeomorphism which fixes $\partial\Sigma$ pointwise.  Let $Y_{\phi}$ be the $3$-manifold given by the open book decomposition $(\Sigma, \phi)$ with $\partial \Sigma \subset Y_{\phi}$ the fibered link given by the binding. Then:  \begin{align*}
    \rank\big(\widehat{\HFL}(Y_\phi,\partial\Sigma, {[\Sigma]},1-G)\big)\geq \Big\lceil\frac{m}{2}\Big\rceil+\underset{1\leq i\leq m}{\sum}2\max\Big\{\big(\big\lfloor|\FDTC(\phi,\partial_i\Sigma)|\big\rfloor-1\big),0\Big\}.\end{align*}
\end{restatable}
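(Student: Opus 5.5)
The plan is to reduce the rank computation to the behaviour of $\widehat{\HFL}$ in Alexander grading $1-G$ (next-to-top for the collapsed grading $[\Sigma]$) under adding boundary Dehn twists. The abstract advertises that this behaviour is determined in the paper. Write $\phi = \psi \circ \prod_i T_{\partial_i}^{k_i}$, where $T_{\partial_i}$ is the boundary-parallel Dehn twist at $\partial_i\Sigma$. Since $\FDTC(\phi,\partial_i\Sigma)=\FDTC(\psi,\partial_i\Sigma)+k_i$, we can choose integers $k_i$ with $|k_i| = \max\{\lfloor|\FDTC(\phi,\partial_i\Sigma)|\rfloor-1,0\}$, signed like $\FDTC(\phi,\partial_i\Sigma)$. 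Then $\psi$ has $|\FDTC(\psi,\partial_i\Sigma)|<2$ at every boundary component.

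The argument would proceed in three steps.

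\textbf{Step 1 (base bound).} For any monodromy, including $\psi$, we show
\[
\rank \widehat{\HFL}(Y,\partial\Sigma,[\Sigma],1-G)\geq \lceil m/2\rceil .
\]
In the next-to-top grading, $\widehat{\HFL}$ is computed by a Heegaard diagram adapted to the open book, in the style of Honda--Kazez--Mati\'c and Baldwin--Vela-Vidarte: generators are basis arcs with one intersection point moved off the top. I expect the bound to come from a spectral sequence or exact triangle relating this group to Floer homology of $\phi$. Alternatively, it may come from the known fact that for a fibered link the next-to-top group is nonzero and carries an action whose structure forces $\lceil m/2\rceil$ dimensions. One possible source is the multi-Alexander gradings: there are $m$ refined gradings summing to $1-G$, each refined piece of the form $\mathrm{top}-e_i$ is nonzero, and pairing them gives $\lceil m/2\rceil$. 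If the paper has an earlier lemma giving exactly this count, I would invoke it.

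\textbf{Step 2 (twisting formula).} This is the main technical input. We show that adding $T_{\partial_i}^{\pm1}$ increases the rank of the grading $1-G$ summand by exactly $2$ once the FDTC at $\partial_i$ is already of absolute value at least $1$ in the same direction. The idea is that in a diagram adapted to the open book, a boundary twist of an arc meeting $\partial_i$ adds two new intersection points near $\partial_i$ between $\alpha$ and $\beta$ curves in the next-to-top grading. We would then show these two points survive in homology. The survival argument would use the FDTC hypothesis: by the Honda--Kazez--Mati\'c criterion, when $\FDTC\ge 1$ (or $\le -1$) the monodromy is right-veering (respectively left-veering) near $\partial_i$ in a strong sense. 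We can then choose arcs at $\partial_i$ so that $\psi$ sends each to the right of itself. With that choice, no holomorphic bigon can cancel the new generators, since any such disc would have to pass the basepoints near $\partial_i$.

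\textbf{Step 3 (induction).} Since $\psi$ need not have $|\FDTC|\ge1$, we first absorb one twist into $\psi$: set $\psi'=\psi T_{\partial_i}^{\pm1}$, which has $|\FDTC|\ge1$ wherever needed. Step 1 applies to $\psi'$, and Step 2 is then applied $|k_i|$ times at each component. Summing over components gives $2\sum_i|k_i|$ additional rank, which yields the stated inequality. The floor and the $-1$ in the statement exactly account for this one absorbed twist, together with the normalisation $|\FDTC(\psi')|\in[1,2)$.

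The main obstacle is Step 2, specifically proving that the new generators created by the twist are not killed by differentials. I would try first to prove this with the Alexander multi-grading: the new generators lie in refined grading $\mathrm{top}-e_i$, and in an adapted diagram the differential in that refined grading is controlled entirely by local discs near $\partial_i$. Under the veering hypothesis, those local discs can be enumerated explicitly.
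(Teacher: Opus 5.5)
\textbf{Step 2 has a genuine gap.} Your reduction is sound. It is even slightly more economical than the paper's: you normalise so that $|\FDTC(\psi,\partial_i\Sigma)|\in[1,2)$ whenever $k_i\neq0$ and apply the base bound directly to $\psi$. That way you never need the weak ``rank changes by $0$ or $2$'' statement (Proposition~\ref{thm:HFKaddtwists1}) that the paper uses for the first twist. Drop the extra absorbed twist in Step~3, though. With your normalisation you apply the twisting step exactly $|k_i|$ times starting from $\psi$; absorbing one more and then twisting $|k_i|$ times overshoots to $\phi\circ T_{\partial_i}^{\pm1}$.

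Showing that the generators created by the twist are not cancelled by discs yields, at best, some nonzero classes in the more-twisted group. It says nothing about whether the classes of the less-twisted group persist. The two groups are computed from different diagrams: the extra winding of $\beta$ near $\partial_i\Sigma$ also changes the domains seen by the old generators. You have no map between them, and ``exactly $+2$'' needs one.

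The paper gets this map from the surgery exact triangle along a pushoff $\eta$ of $\partial_i\Sigma$ with page framing. The unsurgered and $1$-surgered terms are (after orientation reversal) the bindings for $\phi$ and $\phi\circ\Delta_i$. The $0$-surgery turns $\partial_i\Sigma$ into a split unknot next to a fibered link, so the third term in grading $1-G$ has rank exactly $2$, with explicit generators $\bm{v}_\pm$ (Lemma~\ref{lem:0surgery}). Then ``$+2$'' is equivalent to surjectivity of the triangle map $f_1^*$ out of the $\phi$-term. This is proved by exhibiting triangles from $\bm{u}_+$ to $\bm{v}_+$ and from $\bm{u}_-^3$ to $\bm{v}_-$ (Lemmas~\ref{lem:f1nontrivial} and~\ref{lem:surj}).

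This is where the threshold is really used. Weak left-veering only gives the first triangle, hence only ``$+0$ or $+2$''. The second triangle requires three intersection points $d_1,d_2,d_3$ of $\alpha_1$ and $\beta_1$ near $\partial_i\Sigma$ (the content of the hypothesis on $\phi\circ\Delta_i^2$). It also requires a case analysis showing $\bm{u}_-^3$ is a cycle (Lemma~\ref{lem:cycle}). Your local disc enumeration is roughly that last ingredient; it cannot replace the triangle. Note also that the triangle argument needs $|\partial\Sigma|\geq2$; for $m=1$ the paper instead reads the $+2$ off the immersed curve, using Baldwin--Ni--Sivek.

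\textbf{Step 1 is not actually argued.} The heuristic you offer does not work: if all $m$ refined pieces were known to be nonzero, you would get $m$, not $\lceil m/2\rceil$. The paper only obtains $m$ under extra hypotheses (a rational homology sphere, or a left-veering arc at every component). The paper's mechanism runs as follows.
\begin{enumerate}
\item Choose a basis of arcs in which every boundary component meets an arc moved by $\phi$ (Lemma~\ref{lem:arcs}).
\item After one global orientation reversal, at least $\lceil m/2\rceil$ components carry such an arc sent strictly to the left.
\item For each such $j$, let $\bm{d}_j$ be the generator obtained from the BRAID invariant $\bm{c}$ by replacing $c_j$ with $d_j$. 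It represents a class in grading $1-G$ with $\partial_k^*\bm{d}_j=\delta_{jk}\bm{c}$, where $\partial_k^*$ is the $E_2$ differential of the spectral sequence allowing discs through $w_k$.
\item By Tovstopyat-Nelip, $\bm{c}$ generates the extremal grading, so the $\bm{d}_j$ are independent.
\end{enumerate}
The halving is exactly that single orientation choice. The identity monodromy is handled separately by Lemma~\ref{lem:identitycase}.
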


Here $G:= g(\Sigma)+m-1$, the maximal Alexander grading $A$ in which $\widehat{\HFL}(Y_\phi,\partial\Sigma, {[\Sigma]},A)\big)\neq 0$.  $ {[\Sigma]}$ is the Alexander grading on $\widehat{\HFL}(\partial\Sigma)$ induced by $\Sigma$; see Section~\ref{sec:Heegaard} for details. Note that the lower bound in Theorem~\ref{corHFKrank} is in terms of the floor of the fractional Dehn twist coefficients. This could be written equivalently as a lower bounds in terms of the ``floor" of $\phi$, a coarser measure of the twisting of $\phi$ about components of $\partial\Sigma$. See Section~\ref{sec:HFLintro} for details. In particular, the lower bound in Theorem~\ref{corHFKrank} does not account for more granular information about $\phi$ recorded by the non-integer components of the fractional Dehn twist coefficients.  The most substantial new content of the proof of Theorem~\ref{corHFKrank} is two propositions (Proposition~\ref{thm:HFKaddtwists} and Proposition~\ref{thm:HFKaddtwists1}) that together (almost) characterize the behavior of link Floer homology under adding full boundary twists to $\phi$ in the multi-boundary component case. Theorem~\ref{corHFKrank} fails in the case that $\Sigma$ is an annulus; see Proposition~\ref{prop:annuluscase} and Equation~\ref{eq:annulus2} for a complete computation of the link Floer homology of $\partial \Sigma$ in this case.

As noted in Baldwin's blog~\cite{Baldwinsite}, the fact that there is a lower bound on link Floer homology arising from the fractional Dehn twist coefficient in the single boundary case follows from Hedden and Levine's dual surgery formula~\cite{hedddenlevinedual} and~\cite[Theorem 1.3]{baldwin2022floerveering}. In this paper, we give an equivalent proof in this case using Hanselman-Rasmussen-Watson's reinterpration of the bordered Floer homology of manifolds with torus boundary in terms of immersed curves~\cite{hanselman2016bordered,hanselman2018heegaard} and work of Baldwin, Sivek, and Ni~\cite{baldwin2022floerveering}.

For knots, link Floer homology and knot Floer homology are equivalent. For the link case, as we are using it, the link Floer homology of a link $L$ in a $3$-manifold $Y$ is equivalent to the knot Floer homology of $L$ in $Y$ (i.e. the knot Floer homology of the \emph{knotification} of $L$ in the $3$-manifold $Y\#(S^1\times S^2)^{m-1}$, see~\cite{Holomorphicdisksandknotinvariants} for details), so in the statements of our results, we could have replaced $\widehat{\HFL}$ with $\widehat{\HFK}$. However, since we will be working with the Heegaard diagrams used in the setup of link Floer homology, as opposed to knot Floer homology, we use link Floer homology notation.

The majority of this paper is devoted to proving the result in the multiple boundary component case, where the theory of immersed curves has not yet been developed. Our strategy is to make an argument using an explicit computation of a map in a surgery exact triangle.

As a consequence, using the spectral sequence from annular Khovanov homology of an annular link $L$ to the knot Floer homology of the lift of the braid axis to the double branched cover of $L$ due to Roberts~\cite{roberts_knot_2013} and Grigsby and Wehrli~\cite{grigsby_khovanov_2010}, we show:

\begin{restatable}{theorem}{akh}\label{con:akh}
    Suppose that $\beta$ is an $n$-braid with $n>1$.  If $n$ is odd then \begin{align}\rank(\AKh(\widehat{\beta},n-2))\geq\max\Big\{2\Big\lfloor\Big|\frac{\FDTC(\beta)}{2}\Big|\Big\rfloor,2\Big\}.\end{align}

  \noindent  If $n$ is even then \begin{align}\label{ineq:akh2}
        \rank(\AKh(\widehat{\beta},n-2))\geq \max\{4\lfloor|\FDTC(\beta)|\rfloor-2,2\}.\end{align}
\end{restatable}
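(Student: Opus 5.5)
The plan is to deduce Theorem~\ref{con:akh} from Theorem~\ref{corHFKrank} by way of the Roberts / Grigsby--Wehrli spectral sequence. Take the braid closure $\widehat\beta$ in the thickened annulus and pass to the double branched cover of $S^3$ along $\widehat\beta$. The braid axis $B$ lifts to a fibered link $\widetilde B$ in $\Sigma(\widehat\beta)$. Its page $\widetilde\Sigma$ is the double cover of the disk branched at the $n$ points; its monodromy $\widetilde\beta$ is the lift of $\beta$, viewed as a mapping class of the $n$-punctured disk.

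The spectral sequence of~\cite{roberts_knot_2013,grigsby_khovanov_2010} runs from $\AKh(\widehat\beta)$ to $\widehat{\HFK}$ of this lift, with the annular grading $k$ converging to (twice) the Alexander grading. Since rank can only drop along a spectral sequence, the rank of $\AKh(\widehat\beta,n-2)$ is at least the rank of the next-to-top Alexander grading summand. Using the knotification identification from the introduction, that summand is $\widehat{\HFL}(\Sigma(\widehat\beta),\widetilde B,[\widetilde\Sigma],1-G)$. I will need to check that the annular grading $n-2$ corresponds exactly to Alexander grading $G-1$, where the top annular grading $n$ matches $G$.

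The next step is the topology of the cover, via Riemann--Hurwitz. If $n$ is odd, $\widetilde\Sigma$ has one boundary component and genus $(n-1)/2$. If $n$ is even, it has two boundary components and genus $(n-2)/2$. For $n>1$ the only exceptions are $n=2$, where the cover is an annulus, and (trivially) a disk, which does not occur. Apart from the annulus, $\widetilde\Sigma$ is neither a disk nor an annulus, so Theorem~\ref{corHFKrank} applies.

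Then I relate fractional Dehn twist coefficients. The boundary of $\widetilde\Sigma$ double covers $\partial D_n$.
\begin{itemize}
\item For $n$ odd the boundary covering is connected of degree $2$, so a twist by $c$ downstairs lifts to a twist by $c/2$, giving $\FDTC(\widetilde\beta)=\FDTC(\beta)/2$.
\item For $n$ even each of the two boundary components maps homeomorphically, so $\FDTC(\widetilde\beta,\partial_i)=\FDTC(\beta)$ for $i=1,2$.
\end{itemize}
I would justify this using the definition through lifts to the universal cover of the boundary, or through Nielsen--Thurston representatives, which lift. I should also make sure the lifted boundary Dehn twist convention (the lift of a full twist downstairs versus a half twist) is handled correctly.

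Plugging in: for odd $n$, $m=1$ gives $\rank\ge 1+2\max\{\lfloor|\FDTC(\beta)/2|\rfloor-1,0\}$. For even $n$, $m=2$ gives $\rank\ge 1+4\max\{\lfloor|\FDTC(\beta)|\rfloor-1,0\}=4\lfloor|\FDTC(\beta)|\rfloor-3$ when the floor is positive. These are off by one from the claimed bounds, so there is a parity step. Annular Khovanov homology in a fixed $k$ grading has even total rank in the relevant settings, as does the next-to-top summand, by the $\mathfrak{sl}_2$ action or by symmetry/Euler characteristic considerations. That rounds $2a-1$ up to $2a$ and $4a-3$ up to $4a-2$.

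The lower bound $2$ comes from nontriviality: $\AKh(\widehat\beta,n-2)$ is nonzero with even rank. This can be seen from the Euler characteristic, or from Theorem~\ref{corHFKrank}'s $\lceil m/2\rceil\ge1$ term together with parity.

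The case $n=2$ is handled separately. Here the cover is an annulus, and I would use Proposition~\ref{prop:annuluscase}, or a direct computation of $\AKh$ of $2$-braid closures $\sigma_1^k$ with $\FDTC=k/2$, to verify the bound.

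I expect the main obstacle to be the parity/rounding step together with the grading bookkeeping. I would try to get the parity first from the $\mathfrak{sl}_2$ weight-space structure of $\AKh$ (Grigsby--Licata--Wehrli), which pairs weight $n-2$ with weight $-(n-2)$ but does not obviously give evenness within one grading. Failing that, I would use the fact that the Euler characteristic of each annular grading is computable for braid closures, and combine it with a mod-$2$ symmetry of the $E_\infty$ page.
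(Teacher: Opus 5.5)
Your overall architecture matches the paper's: the Roberts/Grigsby--Wehrli spectral sequence, Riemann--Hurwitz, the Ito--Kawamuro relations between $\FDTC(\beta)$ and the lifted monodromy, the Heegaard Floer rank bounds, and Proposition~\ref{prop:annuluscase} for $n=2$.

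\textbf{The gap.} The step you use to close the off-by-one discrepancy is false, and it is load-bearing. You use it to round $2a-1$ up to $2a$ and $4a-3$ up to $4a-2$, and also to obtain the floor of $2$. But $\rank\AKh(\widehat\beta,n-2)$ need not be even.
\begin{itemize}
\item For the trivial $3$-braid, $\AKh(\widehat\beta)=V^{\otimes 3}$ with $V$ supported in annular gradings $\pm1$, so $\rank\AKh(\widehat\beta,1)=3$.
\item For even $n$, take $\beta=\sigma_1\sigma_2\in B_4$. Its closure is the closure of the $3$-braid $\sigma_1\sigma_2$ together with a disjoint core circle, so $\rank\AKh(\widehat\beta,2)=\rank\AKh(\widehat{\sigma_1\sigma_2},1)+1$.
\item The first term there is even. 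Mod $2$, rank equals the dimension of the chain complex in that annular grading, and resolutions containing an inessential circle contribute evenly. The two remaining resolutions contribute $3+1$.
\end{itemize}
In general, the parity in grading $n-2$ is $n$ plus the number of resolutions consisting of exactly $n-2$ essential circles, and nothing controls that count. The link Floer side is odd here too: the lifted axis for $\sigma_1\sigma_2\in B_4$ is the trefoil summed with the binding of the annular open book with trivial monodromy, whose next-to-top rank is $3$.

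\textbf{Where the two missing $1$'s actually come from.}

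For odd $n$, your check that annular grading $n$ matches $G$ cannot succeed, since $2g(\widetilde\Sigma)=n-1$. The target of the spectral sequence is $\widehat{\HFK}(\widetilde A)\otimes V$ with $V$ in annular gradings $\pm1$. So annular grading $n-2$ receives both Alexander grading $g-1$ and the rank-one top grading. This is Equation~(\ref{eq:oddindex}). Combined with Proposition~\ref{prop:HFKsingleboundarycomponent} and $\FDTC(\widetilde\beta)=\FDTC(\beta)/2$, it gives exactly $2+2\max\{\lfloor|\FDTC(\beta)/2|\rfloor-1,0\}$.

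For even $n$ there is no such tensor factor (Equation~(\ref{eq:evenindex})). Instead, the paper improves the base case $\lceil m/2\rceil=1$ of Theorem~\ref{corHFKrank} to $2$; see Lemma~\ref{lem:HFLbraidlift} and Remark~\ref{rmk:nociels}.
\begin{itemize}
\item A basis arc of $D_n$ (running from $\partial D_n$ to a puncture) that $\beta$ sends strictly to the left lifts to an arc of $\widetilde\Sigma$. This lifted arc joins the two boundary components and is sent strictly to the left at both endpoints.
\item Hence both generators $\bm{d}_1,\bm{d}_2$ from the proof of Lemma~\ref{lem:atleasthalf} are available, with no orientation reversal needed to get them.
\item Lemma~\ref{lem:identitycase} covers the case $\beta=1$.
\end{itemize}
Running the twisting propositions from this base case gives $2+4\max\{\lfloor|\FDTC(\beta)|\rfloor-1,0\}$.
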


Here, $\AKh(L,k)$ indicates the annular grading $k$ summand of the annular Khovanov homology of $L$, with $\Z/2$ coefficients. The first author proved a weaker version of this rank bound, namely that $\beta$ is an $n$-braid then ${\rank(\AKh(\widehat{\beta},n-2);\C)\geq 2}$~\cite[Theorem 3.1]{binns2024closures}. The still weaker result that $\rank(\AKh(\widehat{\beta},n-2;\C))\geq 1$ follows directly from the fact that annular Khovanov homology with complex coefficients admits the structure of an $\mathfrak{sl}_2$-representation~\cite{grigsby_annular_2018}.

In Section \ref{sec:examples} we give some examples showing that the rank bounds in Theorems~\ref{corHFKrank} and~\ref{con:akh} can be arbitrarily bad. We do not have any examples where either are tight. Note that work of Gabai, Kazez, and Roberts \cite{gabai1997problems,KazezRobertsFDTC} shows that the monodromy of every fibered knot in $S^{3}$ has FDTC between $-\frac{1}{2}$ and $\frac{1}{2}$, so fibered knots in $S^{3}$ will not give interesting examples. 

Theorems~\ref{corHFKrank} and~\ref{con:akh} fit into a large body of work about which topological information knot and link Floer homology and annular Khovanov homology encodes. For instance, knot Floer homology detects the genus of a knot~\cite{ozsvath2004holomorphicgenusbounds}, whether a knot in $S^{3}$ is fibered~\cite{ghiggini2008knot, ni2007knot}, whether the monodromy of a fibered knot is right-veering~\cite{baldwin2022floerveering}, and the number of fixed points of the monodromy of a fibered knot~\cite{ni2023note}. Annular Khovanov homology distinguishes braids from other tangles~\cite{grigsby_sutured_2014}, distinguishes the trivial braid closure from other braid closures~\cite{BG}, and can be used to obstruct quasipositivity and detect right-veeringness of braid closures~\cite{grigsby2018annular}.  

In~\cite[Theorem 1 and Equation 1]{HeddenMark}, Hedden and Mark show that the rank of the Heegaard Floer homology of a 3-manifold bounds the absolute value of the FDTC of the monodromy of any of its open book decompositions with connected binding. In this paper we give a superficially different proof of a similar (but not identical) bound, Proposition \ref{prop:versionofheddenmark}.  As a corollary to their theorem, Hedden and Mark show that there is a bound relating the FDTC of an odd-stranded braid to the reduced Khovanov homology of its closure, using a spectral sequence. Our proof strategy for Theorem \ref{con:akh} is analogous in its basic structure to theirs, but we are able to draw conclusions for both odd- and even-stranded braids due to having a link Floer homology result for both fibered knots and links.

\subsection*{Outline} In Section~\ref{sec:topprelim} we review relevant background. In Section~\ref{sec:prelim} we review some aspects of Heegaard Floer homology that will be relevant in Sections~\ref{sec:singlecomponent} and~\ref{sec:HFL}, and define some specific Heegaard diagrams we will work with in those sections. In Section~\ref{sec:singlecomponent} we prove the single boundary component case of Theorem~\ref{corHFKrank} using immersed curve techniques. In Section~\ref{sec:HFL} we extend this to the multi-boundary component case, proving Theorem~\ref{corHFKrank} in full generality. In Section~\ref{sec:akh} we prove Theorem~\ref{con:akh}. We end with some examples and questions in Section~\ref{sec:examples}.

\subsection*{Acknowledgments} The first author would like to thank Shunyu Wan for many helpful conversations concerning a related project. He is also grateful for the program~\cite{hunt2015computing}, which he found helpful throughout the course of this project. Finally, he would like to thank Robert Lipshitz, Gheehyun Nahm, Peter Ozsv\'ath, and Zolt\'an Szab\'o for a number of helpful discussions. Both authors would like to thank John Hubbard for useful conversations about annuli. Some figures were made with the help of ChatGPT. 

The first author was supported by the Simons Grant {\em New structures in low-dimensional topology}. The second author was supported by NSF LEAPS-MPS Grant 2213451 and PSC-CUNY grants 66536-00 54, 67582-00 55, and 68461-00 56.

\section{Topological Background}\label{sec:topprelim}
In this section, we review aspects of mapping class group theory that will be of use in subsequent sections. We also use this opportunity to fix notation and terminology.

\subsection{Fibered Links}

Fix $Y$, a closed oriented $3$-manifold. An \emph{open book decomposition} of $Y$ is a decomposition of $Y$ into the mapping torus of some diffeomorphism $\phi$ on an oriented compact surface with boundary $\Sigma$ along with a union of solid tori (see \cite{etnyre2004lectures}, Definition 2.3, for details). We require that $\phi$ restrict to the identity on $\partial \Sigma$. It is well known that every $3$-manifold admits an open book decomposition. The map $\phi$ is often referred to as the \emph{monodromy} of the open book, $\partial \Sigma$ as the \emph{binding} of the open book, and any copy of $\Sigma$ under the flow of $\phi$ as a \emph{page} of the open book. A knot or link in $Y$ is said to be \emph{fibered} in $Y$ if it can be realized as the binding of an open book decomposition for $Y$.  Examples of fibered knots in $S^{3}$ include the trefoils and the figure-eight knot.

Let $L_i$ be a component of a fibered link, $L$, in a $3$-manifold, $Y$, corresponding to an open book decomposition, $(\Sigma,\phi)$. The core of $-1/n$-surgery on $L_i$, together with the components of $L\setminus L_i$ is  a fibered link in $Y_{1/n}(L_i)$. Here the framing is measured with respect to the framing induced on $L_i$ by $\Sigma$. The link in the surgered manifold corresponds to an open book decomposition $(\Sigma,\Delta_i^n\circ\phi)$ thereof.

\subsection{Right-Veering and Left-Veering Diffeomorphisms}

Let $\Sigma$ be a connected oriented surface with compact boundary. Given two properly embedded arcs $a$ and $b$ on $\Sigma$ both originating from a basepoint $x_{0} \in \partial \Sigma$, we say that $a$ is \emph{weakly to the right} of $b$ if either $a$ is isotopic to $b$ rel endpoints or, after isotoping the arcs rel boundary so that they have minimal intersection, $a$ is to the right of $b$ in a neighborhood of $x_{0}$. Equivalently, $b$ in this scenario is to the left of $a$. Notice that, according to this definition, an arc is both weakly to the right and weakly to the left of itself. If we do not want to allow the case where $a$ and $b$ are isotopic rel endpoints, we say that one is \emph{strictly to the right/left} of the other. These definitions naturally descend to homotopy classes of arcs in $\Sigma$ relative to their boundary. The reader dissatisfied with this definition may instead define notions of right and left by considering geodesic representatives of lifts of $a$ and $b$ to the universal cover of $\Sigma$ endowed with its canonical metric; see, for instance, \cite[Section 3]{feller2023dehn}. 

Now, let $\phi$ be a diffeomorphism on $\Sigma$ that fixes $\partial \Sigma$ pointwise. We say that $\phi$ is \emph{(weakly) right-veering} (resp. \emph{ (weakly) left-veering}) if it sends every properly embedded arc in $\Sigma$ (weakly) to the right (resp. left). The notion of weak right-veeringness is important as it gives a characterization of tight contact structures~\cite{honda2007right}.

\subsection{Fractional Dehn Twist Coefficients}\label{sec:FDTC}

Let $\Sigma$ be a connected oriented surface with compact boundary and let $\phi$ be a diffeomorphism on $\Sigma$ that fixes $\partial \Sigma$ pointwise. Fix a connected boundary component $L_i$ of $\Sigma$. The fractional Dehn twist coefficient $\FDTC(\phi,L_i)$ is a real number that, informally, measures how much $\phi$ ``twists" about $L_i$. This number is generally different for each boundary component of $\Sigma$. In the case where we are studying fibered knots or links, the phrase ``fractional Dehn twist coefficient of the fibered knot or link" refers to the fractional Dehn twist coefficient of the monodromy for the corresponding open book. 

The fractional Dehn twist coefficient (or rather, its reciprocal, the \textit{degeneracy slope}) was originally defined by Gabai and Oertel in \cite{gabai1989essential} for fibered knots in their study of essential laminations of $3$-manifolds. The fractional Dehn twist coefficient was later used to study contact structures on $3$-manifolds in~\cite{honda2007right}. It has since been studied by many others and several different definitions for it appear in the literature; see for instance~\cite{KazezRobertsFDTC,ito2018fractional,malyutin2005twist}. 

We give a formal definition of the fractional Dehn twist coefficient due to Feller, the second author, and Turner in \cite{feller2023dehn}. Fix $x_{0}$ a basepoint on $L_i$. Let $\mathcal{J}$ denote the set of homotopy classes of continuous maps $\gamma : [0,1] \to \Sigma$ that start at $x_{0}$, end anywhere on $\partial \Sigma$, and are not boundary parallel. We restrict the homotopies to fix the endpoints of $\gamma$. 

Fix any element $I$ in $\mathcal{J}$. Let $P_{I}$ denote the set of all diffeomorphisms $\phi$ on $\Sigma$ fixing $\partial \Sigma$ pointwise that send $I$ weakly to the right. Now given two diffeomorphisms $\phi$ and $\psi$ on $\Sigma$ fixing $\partial \Sigma$ pointwise, we write that $\phi \leq \psi$ if $\phi^{-1} \psi \in P_{I}$. Let $\Delta_i$ denote the positive Dehn twist about a curve parallel to $L_i$. Let $\big\lfloor \phi \rfloor = \max \{ k : \Delta_{i}^{k} \leq \phi\}$. Then
$$\FDTC(\phi,L_i) = \displaystyle{\lim_{n \to \infty}} \frac{ \big\lfloor \phi^n \rfloor}{n}. $$

In \cite{feller2023dehn} it is shown that this quantity is a real number that is independent of the conjugacy class of the isotopy class of $\phi$ (rel isotopies that fix the boundary), that it is equivalent to other definitions of the fractional Dehn twist coefficient appearing in the literature, and that it is independent of the choice of $I$. 

It is well-known that if $\FDTC(\phi,L_i) > 0$ then $\phi$ is strictly right-veering at $L_i$ (see, for instance, \cite[Corollary 16]{feller2023dehn}). Kazez and Roberts showed in \cite{KazezRobertsFDTC} (and it is clear from the above definition) that if $\phi$ sends any properly embedded arc starting from a basepoint $x_{0}$ on $L_i$ to the right, then $\FDTC(\phi) > 0$, and the equivalent statement for the left. They also give bounds on the fractional Dehn twist coefficient in terms of intersection numbers of arcs. Of use to us will be their \cite[Proposition 2.9]{KazezRobertsFDTC}, which implies that if $\FDTC(\phi,L_i)>2$,  and $a$ is a properly embedded arc originating at $L_i$, the interiors of $a$ and $\phi(a)$ must intersect at least twice in an annular neighborhood of $L_i$.

In Section~\ref{sec:akh}, we will need to move between knot Floer homology and annular Khovanov homology using a double branched cover construction. Given a closed $n$-braid $\beta$ with braid axis the unknot $U$ in $S^{3}$, the double branched cover of $S^{3}$ branched along $\beta$ is a $3$-manifold with the open book decomposition whose monodromy $\phi$ is the lift of $\beta$, whose pages $\Sigma$ are the double branched covers of the $n$-punctured $D_{n}$, and whose binding is the lift of $U$. In particular, if $n$ is odd, $\Sigma$ has one boundary component, and if $n$ is even, $\Sigma$ has two. In \cite{ito2018fractional}, Ito and Kawamuro analyze the behavior of the fractional Dehn twist coefficient under branched coverings. They show, in particular, that when $n$ is odd, $\frac{1}{2} \FDTC(\beta) = \FDTC(\phi)$, and when $n>2$ is even, $\FDTC(\beta)=\FDTC(\phi,c_{i})$ for both boundary components $c_{1}$ and $c_{2}$ of $\Sigma$.

The $n=2$ case must be analyzed separately. The double branched cover of a $2$-braid is a $3$-manifold with equipped with an open book decomposition with annular pages. Up to isotopy rel boundary, the only diffeomorphisms of the annulus are Dehn twists about a core curve (that is, a boundary parallel curve). For any such diffeomorphism, $\phi$, $\FDTC(\phi,L_1) = \FDTC(\phi,L_2)$, where $L_1$ and $L_2$ are the boundary components of the annulus.

\section{Heegaard Floer Preliminaries}\label{sec:prelim}

In this section we review various aspects of Heegaard Floer homology we will use in later sections. As in the previous section, we use this opportunity to fix notation and conventions.

\subsection{Heegaard Diagrams for Fibered Links}\label{sec:Heegaard}
In this section we discuss how to produce Heegaard diagrams for fibered links. In doing so, we set the notation that we will continue to use in Section~\ref{sec:HFL}. 

\begin{definition}
    A \emph{basis of arcs} for an oriented surface, $\Sigma$, with non-empty boundary is a collection of disjoint, properly embedded arcs in $\Sigma$ whose complement is a disk. 
\end{definition}

 Pick an ordering of the connected components of the boundary of $\Sigma$ and denote them by $\partial_1\Sigma,\partial_2\Sigma,\dots\partial_n\Sigma$. We will typically assume that $\partial_1\Sigma$ intersects exactly one arc, $a_1$, as this will make various aspects of the proofs in Section~\ref{sec:HFL} more straightforward. The number of arcs in a basis of arcs for $\Sigma$ is $N:=2g(\Sigma)+|\partial\Sigma|-1$. We will take our arcs to be oriented, and let $\partial_\pm a_i$ indicate the positive and negative components of $\partial a_i$.
 
We first recall a way of obtaining a Heegaard diagram adapted to an open book, see~\cite[Section 5.2]{OzbagciStipsicz} for details.

\begin{definition}\label{def:HDfor3mnfld}
    Let $(\Sigma,\phi)$ be an open book decomposition of a three-manifold $Y$. A \emph{Heegaard diagram} (for $Y$) \emph{adapted to $(\Sigma,\phi)$} is the Heegaard diagram $(S,\bm{\alpha},\bm{\beta},\bm{z},\bm{w})$ formed as follows:

    \begin{enumerate}
        \item Pick a basis of arcs $\{a_i\}_{1\leq i\leq N}$ for $\Sigma$.
        \item Take another copy of $\Sigma$ with the opposite orientation, $-{\Sigma}$. Consider the arcs $\{\overline{a}_i\}_{1\leq i\leq N}\subset -{\Sigma}$. Consider the arcs $\overline{b}_i:=\phi(\overline{a}_i)$ for $1\leq i\leq N$.
        \item $\mathcal{H}$ has Heegaard surface $S:=\Sigma\cup_{\partial \Sigma}-{\Sigma}$. The $\alpha$-curves are given by $a_i\cup \overline{a}_i$ and the $\beta$-curves are given by a perturbation of $a_i\cup \overline{b}_i$ as shown in Figure~\ref{fig:adaptedheegaarddiagram}.
    \end{enumerate}
\end{definition}

    \begin{figure}[ht]
\centering

   \begin{tikzpicture}


\draw[thick] (0,0) ellipse (2 and 3.5);

\draw[thick] (0,0) ellipse (0.25 and 0.5);

\draw[ thick] (0,2) ellipse (0.5 and 0.25);

\draw[thick] (0,-2) ellipse (0.5 and 0.25);

\draw[red, thick] (0,0) ellipse (1.5 and 3);

\draw[blue, thick]  (-1.6,0)  arc(180:360:1.5 and 3);

\draw[red, thick] (0,0) ellipse (0.5 and 0.8);

\draw[blue, thick]  (-0.6,0)  arc(180:360:0.5 and 0.8);

\draw[red,thick] (0,-1.75) 
.. controls (1.5,-1.75) and (1.5,1.75)..
(0.0,1.75);
\draw[red ,thick, dashed] (0,-1.75) 
.. controls (1,-1.75) and (1,1.75)..
(0.0,1.75);

\draw[blue ,thick, dashed] (0.4,-1.9) 
.. controls (1.6,-1.9) and (1.2,0)..
(1.2,0);

\draw[blue ,thick] (0.4,-1.9) 
.. controls (1,-1.75) and (1,0)..
(1,0);

\end{tikzpicture}
    \caption{A Heegaard diagram adapted to an open book $(\Sigma,\phi)$. It is standard to draw the figure so that $-\Sigma$ is the upper half of the diagram and $\Sigma$ is the lower half.}\label{fig:adaptedheegaarddiagram}
   \end{figure}

We now describe how to obtain a Heegaard diagram adapted to the fibered link $\partial\Sigma$.

\begin{definition}\label{def:HDadaptredtoL}
    Let $L$ be an $n$ component oriented link that is the binding of an open book $(\Sigma,\phi)$. A \emph{Heegaard diagram adapted to $L$} is a pointed Heegaard diagram which can be obtained from a Heegaard diagram adapted to $(\Sigma,\phi)$ by the following process:
    
    \begin{enumerate}
        \item Pick a point $p_i$ on $\partial_i\Sigma\setminus(\bm{\alpha}\cup\bm{\beta})$ for each $i$. Use a finger move to push each intersection between a $\beta$-curve and $\partial_i\Sigma$ over $p_i$, in the direction dictated by the orientation of $\Sigma$, without introducing any new intersections of the $\beta$-curves with each other.
        \item Place basepoints on the diagram as follows: label each $p_{i}$ with a $w_i$ basepoint. Leaving $w_{i}$ and traveling along $\partial_i\Sigma$ in the direction dictated by the orientation of $\Sigma$, stop and place a $z_i$ basepoint when all the $\beta$-curves that intersect $\partial_i \Sigma$ have been passed but none of the $\alpha$-curves have been passed. 
        \item Add the following $\alpha$ and $\beta$-curves to the diagram. For all  $\partial_i\Sigma$ except for $\partial_2\Sigma$, add an $\alpha$-curve, $\widetilde{\alpha}_i$, that is the boundary of a neighborhood of the arc in $\partial_i\Sigma$ from $w_i$ to $z_i$ that is oriented as the boundary of $\Sigma$, and add a $\beta$-curve denoted $\widetilde{\beta}_i$ that is a neighborhood of the arc in $\partial_i\Sigma$ from $z_i$ to $w_i$ that is oriented as the boundary of $\Sigma$. We will sometimes refer to these curves as \emph{necklace curves}.
    \end{enumerate}

\end{definition}

See Figure~\ref{fig:adaptedheegaarddiagramlink} for an example of a Heegaard diagram adapted to $L$. These Heegaard diagrams are used in~\cite{tovstopyat2018transverse}. The choice of $\partial_2\Sigma$ --- as opposed to any of the other boundary components --- is unnecessary, but will prove convenient at various stages in Section~\ref{sec:HFL}.

   \begin{figure}[ht]
\centering

   \begin{tikzpicture}


\draw[ thick]
(-1,-3.5)
.. controls (3,-3.5) and (3,3.5) ..
(-1,3.5)
.. controls (-7.5,3.5) and (-7.5,-3.5) ..
(-1,-3.5);

\draw[red, thick] (0,0) ellipse (1 and 3.0);

\draw[red, thick] (-3,0) ellipse (1.8 and 1.0);

\draw[ draw=black, thick] (0,2) ellipse (0.5 and 0.3);
\draw[draw=black, thick] (0,-2) ellipse (0.5 and 0.3);
\draw[draw=black, thick] (-3.25,0) ellipse (0.3 and 0.5);

\node at (-4,0) {$w_1$};
\node at (-5.1,0) {$z_1$};
\node at (-1.5,0) {$w_2$};
\node at (-2.5,0) {$z_2$};

\draw[blue, thick] (-4.6,0) ellipse (0.8 and 0.5);

\draw[blue, thick]
(-2,0)
.. controls (-2,-1) and (-1.0,-1.0) ..
(0,-1.7);

\draw[blue, dashed, thick]
(0,-1.7) .. controls (1,-1.5) and (1.0,-1.4) ..
(1.7,-1.5);

\draw[blue, thick]
(1.7,-1.5) .. controls (0,-1) and (-1.8,-0.5) ..
(-1.8,0);

\draw[blue, thick]
(-5.7,0) .. controls (-5.7,-2) and (-2.3,-2) ..
(-2.3,0);


\draw[red, thick]
(-3.5,0.2) .. controls (-4.5,1) and (-4.5,-1) ..
(-3.5,-0.2);

\draw[red, thick,dashed]
(-3.5,0.2) .. controls (-4,1) and (-5,1) ..
(-5.9,0.2);

\draw[red, thick,dashed]
(-3.5,-0.2) .. controls (-4,-1) and (-5,-1) ..
(-5.9,-0.2);

\draw[red, thick]
(-5.9,0.2) .. controls (-4.6,1) and (-4.6,-1) ..
(-5.9,-0.2);


\draw[blue, thick]
(-2.2,0)
.. controls (-2.2,-1.5) and (-0.5,-2.5) ..
(0,-2.5)
.. controls (0.1,-2.5) and (0.5,-2.5) ..
(0.6,-2)
.. controls (0.6,-0.7) and (-1.9,-1) .. (-1.9,0);

\draw[red, thick, dashed]
(0,1.7)
.. controls (0.25,0.5) and (0.25,-0.5)..
(0.0,-1.7);
\draw[red, thick]
(0,1.7)
.. controls (-0.25,0.5) and (-0.25,-0.5)..
(0.0,-1.7);

\end{tikzpicture}
    \caption{A Heegaard diagram adapted to the binding of an open book.}\label{fig:adaptedheegaarddiagramlink}
   \end{figure}

 We will write $\mathcal{H}(L)$ for the Heegaard diagram adapted to a fibered link $L$ arising from an implicit basis of arcs $\{a_i\}$ for $\Sigma$. Typically we will make the following additional assumptions about the basis of arcs:\begin{enumerate}
     \item That $a_1$ has one endpoint on $\partial_1\Sigma$ and another on $\partial_2\Sigma$,
     \item That $a_i\cap\partial_1\Sigma=\emptyset$ for $i\neq 1$. 
 \end{enumerate}
 Note that there always exists a basis of arcs that satisfies these conditions. We will work primarily with the case that $a_1$ is sent (strongly) to the left by $\phi$ at $a_1\cap\partial_1\Sigma$.  Unless $a_1$ is fixed by $\phi$, this can be arranged up to mirroring. Note that since the upper half of the Heegaard diagram $\mathcal{H}(L)$ --- i.e. the portion corresponding to $-{\Sigma}$ --- comes equipped with an orientation which disagrees with that of $\-{\Sigma}$, arcs which appear to be sent to the left are actually sent to the right.

 Finally, note that if $(S,\bm{\alpha},\bm{\beta},\bm{z},\bm{w})$ is a Heegaard diagram adapted to the binding of an open book $(\Sigma,\phi)$, then $(S,\bm{\beta},\bm{\alpha},\bm{z},\bm{w})$ is a Heegaard diagram for $-Y$, and that $-Y$ has open book decomposition $(\Sigma,\phi^{-1})$.

\subsection{Heegaard Floer homology}\label{sec:HFLintro}

We now review aspects of Heegaard Floer homology which will be relevant for us in subsequent sections.
Let $L$ be a link in a $3$-manifold $Y$ and $\Sigma$ be a surface with boundary $L$. Let $[\Sigma]$ be the homology class of $\Sigma$ in $H_2(Y,L;\Z)$. Link Floer homology is an invariant of tuples $(Y,L)$ due to Ozsv\'ath and Szab\'o~\cite{HolomorphicdiskslinkinvariantsandthemultivariableAlexanderpolynomial}. We briefly review the construction. Let $(S,\bm{\alpha},\bm{\beta},\bm{z},\bm{w})$ be a Heegaard diagram adapted to $L$. We require $(S,\bm{\alpha},\bm{\beta},\bm{z},\bm{w})$ to be \emph{admissible}; that is, for each \emph{periodic domain} --- a linear combination of components of $S\setminus(\bm{\alpha}\cup\bm{\beta})$ not containing a basepoint with boundary a linear combination of $\alpha$ and $\beta$-curves --- to have a negative coefficient. $\widehat{\CFL}(S,\bm{\alpha},\bm{\beta},\bm{z},\bm{w})$ is freely generated over the field of two elements, $\F$, by certain sets of intersection points between $\bm{\alpha}$ and $\bm{\beta}$ curves, $\bm{x}$. Each $\bm{x}$ is required to contain exactly one point from each $\alpha$-curve and exactly one point from each $\beta$-curve. The vector space $\widehat{\CFL}(S,\bm{\alpha},\bm{\beta},\bm{z},\bm{w})$ is endowed with a differential which counts Maslov index $1$ pseudo-holomorphic curves in an auxiliary symplectic manifold, $\Sym^g(S)$, where $g$ is $g(S)+|L|-1$. It turns out that the chain homotopy type of $\widehat{\CFL}(S,\bm{\alpha},\bm{\beta},\bm{z},\bm{w})$ is independent of the choice of pointed Heegaard diagram, so we denote it by $\widehat{\CFL}(Y,L)$ --- though we will still use $\widehat{\CFL}(S,\bm{\alpha},\bm{\beta},\bm{z},\bm{w})$ when we wish to emphasize that we have a specific Heegaard diagram, or specific generators from a specific Heegaard diagram, in mind.

There is a natural map $p:\Sym^g(\Sigma)\to\Sigma$. Given a map $\phi:\bm{D}\to\Sym^g(\Sigma)$, we let $D(\phi)$ denote the image of $\phi$ under $p$. Let $\{D_i\}$ be the components of the complement of the $\alpha\cup \beta$ in $\Sigma$. We let $n_{D_i}$ denote the multiplicity of the map $p\circ\phi$. These numbers can be used to compute the Maslov index of $\phi$, using a combinatorial formula due to Lipshitz~\cite{Lipshitzcylindrical}. 

For any choice of homology class of Seifert surface for $L$, $\Sigma$, $\widehat{\HFL}(Y,L)$ splits as a direct sum over $\Z$. We will denote the $i$th summand in this decomposition by $\widehat{\HFL}(Y,L,[\Sigma],i)$. This grading is called the \emph{Alexander grading} (with respect to $\Sigma$).  We shall denote it by $A_{[\Sigma]}$. One fact that we will use in later arguments is that for a generator $\bm{x}$ in a Heegaard diagram adapted to a fibered link $L$, $\mathcal{H}(L)$ as above, the Alexander grading $A_{-[\Sigma]}(\bm{x})$ can be determined by the number of intersection points of $\bm{x}$ in $\-{\Sigma}$ minus $G:=\dfrac{|L|-\chi(L)}{2}$~\cite[Lemma 4.2]{tovstopyat2018transverse}. The maximal $ {A_{[\Sigma]}}$ grading in which $\widehat{\HFL}(L)$ is non-trivial is given by $G$~\cite[Theorem 8.4, Remark 8.5]{juhasz2008floer} --- see also ~\cite[Theorem 1.1]{ni2006note} for a less general result. Here $\chi(L)$ is the maximal Euler characteristic representative of $[\Sigma]\in H_2(Y,L)$, while $|L|$ is the number of components of $L$. The maximum non-trivial Alexander grading is of rank one if and only if $L$ is fibered~\cite[Theorem 9.11]{juhasz2008floer}, see also~\cite{ni2007knot,ghiggini2008knot}.

Link Floer homology is invariant under a number of operations relevant to this paper, at least if we ignore the Maslov grading, which we will do for the duration of this paper:
\begin{equation}\label{eq:invariance}
\widehat{\HFL}(Y,L,[\Sigma],i)\cong \widehat{\HFL}(-{Y},-{L},[\Sigma],i)\cong\widehat{\HFL}({Y},{L},[-\Sigma],i)\cong \widehat{\HFL}({Y},{L},[\Sigma],-i).\end{equation}

See~\cite[Section 8]{HolomorphicdiskslinkinvariantsandthemultivariableAlexanderpolynomial}. Here $-Y$ denotes $Y$ with its orientation reversed and $-L$ denotes the image of $L$ in $-Y$. Suppressing the $3$-manifold temporarily, we also have that \begin{equation}\label{eq:inverse}
\widehat{\HFL}(L_\phi,[\Sigma],i)\cong \widehat{\HFL}(L_{\phi^{-1}},[\Sigma],-i),\end{equation} which follows from the behaviour of link Floer homology under interchanging the roles of the $\alpha$ and $\beta$-curves.

\subsection{The BRAID invariant}\label{sec:braidinvt}
A link, $L$, in a $3$-manifold, $Y$, equipped with an open book structure is \emph{braided} if it meets every page transversely with positive intersection. Two braid closures are \emph{braid isotopic} if they are isotopic via a family of braid closures. There is an invariant of braid closures (up to braid isotopy) called the \emph{BRAID invariant}~\cite{baldwin2013equivalence}, due to Baldwin, Vela-Vick, and V\'ertesi. This invariant takes value in the link Floer homology of the mirror of the underlying link in the underlying $3$-manifold with its orientation reversed and is defined as a specific generator of $\widehat{\CFL}(\mathcal{H}(-{L}))$, where here $\mathcal{H}(-{L})$ is an appropriately chosen Heegaard diagram for $-{L}\subset -Y$ --- where $-L$ is the orientation reversal of $L$, and $-Y$ is the orientation reversal of $Y$ --- similar to those discussed in Section~\ref{sec:Heegaard}.

We will specifically be interested in the BRAID invariant of a specific approximation of fibered links as a braid in the corresponding open book $(\Sigma,\phi)$. In this case the relevant Heegaard diagram is exactly that presented in Definition~\ref{def:HDadaptredtoL}, but where we reverse the role of the $\alpha$ and $\beta$ curves, as well as the $z$ and $w$-basepoints; that is, we consider $(S,\bm{\beta},\bm{\alpha},\bm{w},\bm{z})$ as opposed to $(S,\bm{\alpha},\bm{\beta},\bm{z},\bm{w})$. The generator of $\mathcal{H}(-L)$ can be described as follows. Note that the Heegaard diagram adapted to $(\Sigma,\phi)$ --- as in Definition~\ref{def:HDfor3mnfld} --- contains a canonical set of intersection points ${c_i\in a_i\cap b_i\subset \Sigma}$ for $1\leq i\leq N:=2g(\Sigma)+|\partial\Sigma|-1$. These intersection points are fixed under the isotopy used to produce the Heegaard diagram adapted to $\partial\Sigma$ in Definition~\ref{def:HDadaptredtoL}. Thus, by a mild abuse of notation, we may consider these as elements in $\alpha_i\cap\beta_i$ in the Heegaard diagram $\mathcal{H}(-L)$. To this set, we add additional generators $\widetilde{c_j}\in \widetilde{\alpha_j}\cap\widetilde{\beta}_j\subset \Sigma$ for $j=1$,$3\leq j\leq  |\partial\Sigma|$. These intersection points are exactly those directly under the corresponding $z_i$ basepoint; see Figure~\ref{fig:adaptedheegaarddiagramlink}. That is, by definition, the BRAID invariant is the class $\bm{c}(L):=\{c_1,c_2,\dots,c_N,\widetilde{c}_1,\widetilde{c}_3,\widetilde{c}_4,\dots\widetilde{c}_n\}$, where $n:=|\partial\Sigma|$.  Note that $\widetilde{c}_2$ does not appear as there are no $\widetilde{\alpha}_2$ or $\widetilde{\beta}_2$ curves.

 From the previous subsection, we know that if $L$ is fibered then $\widehat{\HFL}\big(-{L},-{Y},{-[\Sigma]},-G\big)\cong\F$.  Tovstopyat-Nelip showed, moreover, that if $L$ is fibered then $\widehat{\HFL}\big(-L,-{Y},{-[\Sigma]},-G\big)$ is generated by the BRAID invariant of the braid realised by an appropriate push off of $L$~\cite{tovstopyat2018transverse}. This generalizes earlier work of Vela-Vick~\cite{VelaVicktransverseinvariant}, who showed a similar result in the single boundary component case.

   \subsection{Maps on Heegaard Floer homology}\label{subsec:maps} Some of our proofs in Section~\ref{sec:HFL} will use pseudoholomorphic triangle-counting maps. To define such maps, one starts with a \emph{Heegaard triple-diagram}; that is a Heegaard surface, $\Sigma$, together with collections of $\alpha$, $\beta$, and $\gamma$-curves. One additionally requires that the Heegaard diagrams satisfy an admissibility assumption. To define this condition, recall that a  \emph{triply-periodic domain} is a linear combination of $\alpha$, $\beta$ and $\gamma$-curves. A pointed Heegaard triple-diagram is \emph{weakly admissible} if each non-trivial triply-periodic domain --- i.e.  a union of components of $\Sigma\setminus(\bm{\alpha}\cup\bm{\beta}\cup\bm{\gamma})$ --- can be written as a sum of doubly-periodic domains that have both positive and negative coefficients. Now recall from, say,~\cite[Section 8]{ozsvath2004holomorphic}, that given a weakly admissible Heegaard triple $(\Sigma,\bm{\alpha},\bm{\beta},\bm{\gamma})$  we have a filtered chain map: $$f:\widehat{\CFL}(\mathcal{H}_{\bm{\alpha},\bm{\beta}})\otimes\widehat{\CFL}(\mathcal{H}_{\bm{\beta},\bm{\gamma}})\to\widehat{\CFL}(\mathcal{H}_{\bm{\alpha},\bm{\gamma}}),$$ which is defined by counting pseudo-holomorphic disks subject to the boundary conditions indicated in Figure~\ref{fig:triangle1}. Here $\mathcal{H}_{\bm{\alpha},\bm{\beta}}$ denotes the Heegaard diagram obtained from $\mathcal{H}_{\bm{\alpha},\bm{\beta},\bm{\gamma}}$ by forgetting the $\gamma$-curves. Note that while the triangle counting maps defined in~\cite[Section 8]{ozsvath2004holomorphic} are for maps between closed $3$-manifolds, the same definition applies in the context of link Floer homology; see~\cite[Section 6, 7]{HolomorphicdiskslinkinvariantsandthemultivariableAlexanderpolynomial}.

\begin{figure}[htbp]
    \centering
    \begin{tikzpicture}
        \coordinate (A) at (0,0);
        \coordinate (B) at (2,0);
        \coordinate (C) at (1,{sqrt(3)});
        
        \draw[red, thick] (A) -- (B) node[midway, below] {$\T_{\bm{\alpha}}$};
        \draw[green, thick] (B) -- (C) node[midway, right] {$\T_{\bm{\gamma}}$};
        \draw[blue, thick] (C) -- (A) node[midway, left] {$\T_{\bm{\beta}}$};
        
        \node[below] at (A) {$\bm{x}$};
        \node[below] at (B) {$f_{\bm{\theta}}(\bm{x})$};
        \node[above] at (C) {$\bm{\theta}$};
    \end{tikzpicture}
    \caption{For $\bm{\theta}$ a cycle in $\widehat{\CFL}(\mathcal{H}_{\bm{\beta},\bm{\gamma}})$, $f_{\bm{\theta}}:\widehat{\CFL}(\mathcal{H}_{\bm{\alpha},\bm{\beta}})\to \widehat{\CFL}(\mathcal{H}_{\bm{\alpha},\bm{\gamma}})$ counts triangles of this form.}
    \label{fig:triangle1}
\end{figure}

Thus, for any cycle $\bm{\theta}$ in $\widehat{\CFL}(\mathcal{H}_{\bm{\beta},\bm{\gamma}})$, we obtain a chain map $f_{\bm{\theta}}:\widehat{\CFL}(\mathcal{H}_{\bm{\alpha},\bm{\beta}})\to \widehat{\CFL}(\mathcal{H}_{\bm{\alpha},\bm{\gamma}})$ given by $\bm{x}\mapsto f( \bm{x}\otimes \bm{\theta})$. We note that because we will typically count triangles after having reversed the roles of $\beta$ and $\alpha$-curves in a given manifold, the triangles we count will look different in figures.

\section{The Single boundary component case}\label{sec:singlecomponent}

In this section, we prove Theorem~\ref{corHFKrank} in the special case that $\Sigma$ has a single boundary component:

\begin{restatable}{proposition}{singleboundary}\label{prop:HFKsingleboundarycomponent}
      Suppose $K$ is a non-trivial fibered knot with minimal genus Seifert surface $\Sigma$ in a {$3$-manifold} $Y$. Then $\rank\big(\widehat{\HFL}(K, {[\Sigma]},g(\Sigma)-1)\big)\geq 1+2\max\big\{\big\lfloor|\FDTC(K)|\rfloor-1,0\big\}$.
\end{restatable}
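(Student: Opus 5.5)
Our plan is to use the immersed curve description of bordered Floer homology of the knot complement, as indicated in the introduction. Let $M=Y\setminus\nu(K)$, and let $\widehat{\HF}(M)$ be the Hanselman--Rasmussen--Watson immersed multicurve in the punctured torus $\partial M\setminus z$, lifted to the infinite cylinder $\overline{T}_M$ (the cover associated to the homological longitude $\lambda$ coming from $\Sigma$). The knot Floer homology of $K$ is recovered by pairing this curve with a vertical line through the punctures. The Alexander grading corresponds to height, so $\widehat{\HFL}(K,[\Sigma],a)$ is the number of intersections of the curve with the vertical line at height $a$. Since $K$ is fibered, the top grading $g=g(\Sigma)$ has rank one. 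Consequently exactly one arc of the curve passes through the strip at height $g$, and this distinguished component $\gamma_0$ wraps around the cylinder.

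The key input is the work of Baldwin, Ni and Sivek~\cite{baldwin2022floerveering}, via Hedden--Levine's dual surgery formula. Pairing the curve with lines of slope $1/n$ computes $\widehat{\HF}$ of $1/n$-surgeries. Changing the monodromy by $\Delta^{n}$ exactly realizes those surgeries, using the dual knot, whose complement is also $M$. So the next-to-top behaviour of $\gamma_0$ near height $g$ records how far $\phi$ is from boundary twists. Concretely, after the curve leaves the top strip, it must run in the same direction for a number of punctures governed by $\lfloor\phi\rfloor$, in the sense of Section~\ref{sec:FDTC}. We would aim to prove the following claim. If $\lfloor|\FDTC(\phi)|\rfloor=k\geq 1$, then the segment of $\gamma_0$ adjacent to the top generator crosses the vertical line at height $g-1$ at least $2k-1$ times. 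The two ends of $\gamma_0$ leaving height $g$ each contribute intersections, and each extra full twist adds a pair.

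We would justify this by induction on $k$ using the twisting relation. Replace $\phi$ by $\Delta^{-1}\circ\phi$, which is possible since $\lfloor\phi\rfloor\geq 1$ when $\FDTC\geq 1$, up to mirroring using \eqref{eq:inverse} and \eqref{eq:invariance}. In immersed-curve language this is a Dehn twist of the curve along the meridian-direction as seen from the dual knot. We then verify that each such twist adds exactly two crossings of $\gamma_0$ with the height-$(g-1)$ line while keeping top rank one. The base bound of rank at least $1$ in grading $g-1$ for a nontrivial fibered knot follows from the fact that $\gamma_0$ cannot be a straight line at height $g$. Otherwise $M$ would be a solid torus-like complement and $K$ trivial, which is the case excluded by hypothesis. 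Equivalently, the Baldwin--Vela-Vick result gives $\widehat{\HFK}$ of rank at least $1$ in the next-to-top grading for nontrivial fibered knots.

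The main obstacle is making the correspondence between the FDTC and the geometry of $\gamma_0$ precise. We need to show that the number of times $\gamma_0$ winds in a fixed direction before turning is at least $\lfloor|\FDTC|\rfloor$. This should follow from~\cite[Theorem 1.3]{baldwin2022floerveering}, which detects right-veeringness from the next-to-top group. We would apply it to $\Delta^{-j}\circ\phi$ for $j<\lfloor\FDTC\rfloor$, all of which remain right-veering. Then we would translate right-veeringness of each twisted monodromy into one additional pair of intersections at height $g-1$, tracking via Hedden--Levine how the dual-knot pairing changes under twisting. Finally, we must argue that these intersections are distinct and not cancelled, which holds because the curve is in minimal position.
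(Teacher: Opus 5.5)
Your overall architecture is the paper's. All the knots $K_{\Delta^{j}\circ\phi}$ have complement $M$, so their $\widehat{\HFL}$ is read off from one immersed curve paired with lines of slope $\mp 1/j$. The Baldwin--Ni--Sivek veering criterion pins down the component $\gamma_0$ through the top generator. The bound is $1$ (from Baldwin--Vela-Vick) plus $2$ for each twist taken while $\FDTC>0$, which is exactly the bookkeeping of the paper's proof via Proposition~\ref{prop:newknotFDTC<0}.

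The gap is in the key step, where all the content lies. The Baldwin--Ni--Sivek criterion concerns a single differential out of the top generator: one spectral sequence, i.e.\ one of the two basepoints flanking the puncture at height $g-\tfrac12$. So it constrains only the arc of $\gamma_0$ leaving the top generator on the corresponding side; call it $\gamma_l$. Moreover, right-veeringness of $\Delta^{-j}\circ\phi$ is monotone in $j$. The conditions for all $j<\lfloor\FDTC(\phi)\rfloor$ therefore collapse to a single slope inequality for $\gamma_l$; they do not give ``one additional pair per twisted monodromy.'' As described, your argument yields on the order of $k$ crossings, all on one arc, which is about half of the claimed $2k-1$. Nothing you invoke forces the other arc $\gamma_r$ to wind at all, and the two arcs really can behave differently (compare Figure~\ref{fig:-1<FDTC<0}, where one arc crosses the vertical line in the strip and the other does not).

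The missing ingredient is orientation reversal. $(-Y,K)$ has monodromy $\phi^{-1}$, and its curve is the reflection of $\gamma$ in the vertical line, which interchanges $\gamma_l$ and $\gamma_r$. Hence $\gamma_r$ is controlled by applying the criterion to $\Delta^{j}\circ\phi^{-1}$, where the relevant input is \emph{non}-right-veeringness. This is what the paper does. Lemma~\ref{lem:immersedcurve|FDTC|<1} determines both arcs of the top component for a base monodromy $\phi'$ with $|\FDTC(\phi')|<1$, using the veering of $\phi'$, of $\Delta^{\pm1}\circ\phi'$ and of $\phi'^{-1}$. Proposition~\ref{prop:newknotFDTC<0} then counts intersections of that fixed curve with the lines of slope $-1/k$ through $(0,g-\tfrac12)$.

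Two smaller points. First, run the induction on total rank rather than on crossings of $\gamma_0$. Baldwin--Vela-Vick only controls total rank, and your alternative base-case argument (that $\gamma_0$ ``cannot be a straight line'') is not valid: for the identity monodromy the top component contributes nothing at height $g-1$, although $K$ is non-trivial. Second, the change of coordinates from $K_\phi$ to $K_{\Delta^{-1}\circ\phi}$ is a twist along the longitude (a horizontal shear, preserving heights), not along the meridian.
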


As noted on Baldwin's blog~\cite{Baldwinsite}, this result can be obtained using Hedden-Levine's dual knot surgery formula~\cite{hedddenlevinedual} in combination with a result of Baldwin-Ni-Sivek~\cite{baldwin2022floerveering}. We give a superficially different proof, appealing to work of Baldwin-Vela-Vick~\cite{baldwin_note_2018} and Hanselman-Rasmussen-Watson's theory of immersed curves~\cite{hanselman2016bordered,hanselman2018heegaard}.

We briefly recall the structure of the immersed curve invariant. The reader may find it helpful to refer to~\cite[Section 4.4]{hanselman2018heegaard} for further details. View $S^1$ as $\R/\Z$ where $\Z$ acts by addition. To each knot $K$ the immersed curve invariant assigns a multi-curve $\gamma$ in the cylinder $S^1\times\R\setminus(\{0\}\times(\Z+\frac{1}{2}))$. Perhaps after a small perturbation of $\gamma$, the knot Floer homology of $K_{1/n}$ --- which we set to be the knot given by the core of $1/n$ surgery on $K$ for the remainder of this paper --- can be recovered from $\gamma$ by taking the vector space freely generated by intersections of $\gamma$ and the the curve $ny=x$ in $\R\times\R$ under the natural quotient to $S^1\times\R$. Each intersection point $x$ occurs at a point with $\R$-coordinate in the range from $m-\frac{1}{2}$ to $m+\frac{1}{2}$ for some $m\in\Z$. The Alexander grading of such an intersection point $x$ is then exactly $n$~\cite[Proposition 56]{hanselman2018heegaard}. The Heegaard Floer homology of $1/n$ surgery on $K$ can be determined similarly, using a  $ny=x+\epsilon$ for an appropriate $\epsilon$ 
 --- for example $\epsilon\in\R\setminus\Q$ --- instead of the curve $ny=x$. The spectral sequence from $\widehat{\HFL}(K_{1/n})$ to $\widehat{\HF}(Y_{1/n}(K))$ can be recovered by counting bigons subject to appropriate boundary conditions --- see~\cite[Section 4.3]{hanselman2018heegaard}.

\begin{proposition}\label{prop:newknotFDTC<0}
   Let $\phi:\Sigma\to\Sigma$ be a diffeomorphism, where $\partial \Sigma$ has a single component and $\Sigma$ is not a disk, and $K_\phi$ be the binding of the corresponding open book. If $\FDTC(\phi)>0$ then:\begin{align}\label{HFK:twists-1}
      \rank\big(\widehat{\HFL}(K_{\Delta\circ\phi}, {[\Sigma]},1-g(\Sigma)) \big)=\rank(\widehat{\HFL}(K_\phi, {[\Sigma]},1-g(\Sigma)))+2.
   \end{align}
On the other hand, if $\FDTC(\phi)= 0$ then $\rank\big(\widehat{\HFL}(K_\phi, {[\Sigma]},1-g(\Sigma))\big)$ is:\begin{align}\label{HFK:twists0}
      \rank\big(\widehat{\HFL}(K_{\Delta\circ\phi}, {[\Sigma]},1-g(\Sigma))\big),\text{ or }   \rank\big(\widehat{\HFL}(K_{\Delta\circ\phi}, {[\Sigma]},1-g(\Sigma))\big)-2.
   \end{align}

\end{proposition}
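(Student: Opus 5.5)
The plan is to work entirely with the Hanselman--Rasmussen--Watson immersed curve $\gamma$ of the complement of $K_\phi$, using the observation from Section~\ref{sec:topprelim}: the binding of $(\Sigma,\Delta\circ\phi)$ is the core of $-1$ surgery on $K_\phi$, with framing measured relative to $\Sigma$. So $K_{\Delta\circ\phi}$ and $K_\phi$ share a complement, and both knot Floer homologies are read off from the same $\gamma$ by intersecting with two lines of slopes differing by one. Equivalently, after a change of parametrization of the boundary torus, the lift of $\gamma$ is sheared by one Dehn twist while the pairing line stays fixed. By the grading rule recalled above, generators in Alexander grading $1-g$ are the intersection points lying in the horizontal band at the relevant height $m=\pm(g-1)$; by the symmetry~\eqref{eq:invariance} we may count at either end. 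I would therefore localise the whole problem to how $\gamma$ crosses the bands at heights $g-1$ and $g$, and determine how the intersection count in the $(g-1)$-band changes under the shear.

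First I would pin down the local shape of $\gamma$ near the top. Fiberedness gives $\widehat{\HFL}(K_\phi,[\Sigma],g)\cong\F$ (Section~\ref{sec:HFLintro}), so exactly one intersection lies in the top band. Hence a single strand of $\gamma$ reaches height $g$: it enters from the $(g-1)$-band, wraps once around the puncture at height $g-\tfrac12$ (or passes beside it), and returns. All other strands stay at height $\le g-1$. The Alexander grading $1-g$ count then splits into two parts. One part is intersections of the lower strands with the pairing line inside the $(g-1)$-band; a shear by one full twist takes a strand segment crossing the band to one crossing it with the same combinatorics, so this part is unchanged. The other part is the contribution of the two ends of the top strand as they enter and leave the $(g-1)$-band. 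Under the shear these two ends each pick up one extra crossing, or lose one, according to which side of the puncture the top strand turns.

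The key input is the link between this turning direction and veering. Baldwin--Vela-Vick~\cite{baldwin_note_2018} and Baldwin--Ni--Sivek~\cite{baldwin2022floerveering} show that right-veeringness of $\phi$ is detected by the behaviour of the top generator relative to the next-to-top grading, which in immersed-curve language is the direction in which the top strand leaves the band at height $g$. If $\FDTC(\phi)>0$ then $\phi$ is strictly right-veering, and so is $\Delta\circ\phi$, since its FDTC is $\FDTC(\phi)+1>0$. So the top strand turns in the direction for which adding a positive twist lengthens both of its ends through the $(g-1)$-band by one crossing each. That gives exactly $+2$, which is~\eqref{HFK:twists-1}. If $\FDTC(\phi)=0$, we know nothing about the turning direction of the strand for $\phi$, but $\Delta\circ\phi$ has FDTC $1$ and is right-veering. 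Thus either the strand already turned the right way, giving a change of $+2$, or it passed the other way and the shear only straightens it, giving a change of $0$. This is~\eqref{HFK:twists0}. The hypothesis that $\Sigma$ is not a disk ensures $g\ge1$, so the top and next-to-top bands are distinct and the top generator is not isolated at height $0$.

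The main obstacle is the second step. I must justify rigorously that the local picture of $\gamma$ near heights $g-1,g$ is as described, including that the curve can be put in a pegboard-tight position in which the shear changes only the top strand's crossings. I must also translate the Baldwin--Ni--Sivek veering criterion precisely into the turning direction of that strand, with correct sign conventions for $\pm1$ surgery and for the orientation of the pairing line. I would first try to do this by computing $\widehat{\HFL}(K_\phi)$ and $\widehat{\HFL}(K_{\Delta\circ\phi})$ in the top two gradings in parallel. This would pair each intersection before the shear with one after, leaving only the two extra bigon-adjacent points at the ends of the top strand. I would check the sign against a basic example such as the trefoil, or a twisted fibered knot with large positive FDTC.
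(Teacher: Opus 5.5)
Your outline is correct and uses the same ingredients as the paper. Both $\widehat{\HFL}(K_\phi)$ and $\widehat{\HFL}(K_{\Delta\circ\phi})$ are read off one immersed curve, fiberedness isolates a single strand hooked over the peg at height $g-\frac12$, and the Baldwin--Ni--Sivek criterion decides how the two arcs leaving the top intersection point sit relative to that peg. What differs is the bookkeeping.

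The paper writes $\phi=\Delta^k\circ\phi'$ with $0\le\FDTC(\phi')<1$. It proves a separate lemma pinning down the slope intervals of the two arcs of $\gamma_{\phi'}$, using the veering of $\phi'$, of $\Delta^{\pm1}\circ\phi'$, and (via the symmetry of $\gamma$) of their inverses. It then compares pairings with lines of slopes $-1/k$ and $-1/(k+1)$. This yields an explicit local picture of $\gamma$ for $|\FDTC|<1$, which the paper reuses in Proposition~\ref{prop:versionofheddenmark}.

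You instead shear once, directly at $\phi$. One shear changes each arc's crossing count in the $(g-1)$-band by exactly one, up or down, whatever its slope. So only the direction in which each arc leans matters, and you avoid both the reduction to small FDTC and the slope estimates.

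When you write it out, three points need fixing.
\begin{itemize}
\item Track the two arcs separately rather than a single ``turning direction''. The arc on the side seen by the Baldwin--Ni--Sivek differential gains a crossing exactly when $\phi$ is weakly right-veering. The other arc gains exactly when $\Delta\circ\phi$ sends some arc strictly to the right. You read this off by applying the criterion to $(\Delta\circ\phi)^{-1}$ and using the symmetry of $\gamma$, as the paper does for $\gamma_r$.
\item This arc-by-arc count gives $1+1$ when $\FDTC(\phi)>0$ and $(\pm1)+1$ when $\FDTC(\phi)=0$. The identity has arcs leaning in opposite directions yet still gives $+2$, so your ``already turned the right way'' dichotomy is not quite the right one.
\item The other components contribute the same count, but not because a shear preserves band-crossing segments. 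It does not; that is precisely what changes the top strand's count. The real reason is that they are tight and disjoint from $\{0\}\times(g-\frac12,\infty)$, so they meet the band only where they pass over the pegs at height $g-\frac32$. Such a passage crosses the upward ray of every pairing line through that peg the same number of times.
\end{itemize}
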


For the statement of the first Lemma of this section, recall that an immersed curve  $\gamma$ can be \emph{pulled tight} (with respect to some quantity $\epsilon$, which we suppress) by isotoping $\gamma$ to a geodesic representative in $S^1\times \R$, which is usually unique. See~\cite[Section 7.1]{hanselman2016bordered} for details. 

\begin{lemma}\label{lem:immersedcurve|FDTC|<1}
    Let $\phi:\Sigma\to\Sigma$ be a self-diffeomorphism of a surface with a single boundary component of genus $g>0$ fixing $\partial \Sigma$ and let $K_\phi$ be the binding of the corresponding open book. Let $\gamma$ denote the immersed curve invariant of $K_\phi$. After pulling tight, $\gamma$ intersects $\{0\}\times(g-\frac{1}{2},\infty)$ at a unique point $c\in\{0\}\times(g-\frac{1}{2},g+\frac{1}{2})$. Moreover, the component of $\gamma$ that intersects $c$, $\gamma_c$, has trivial local system. In the region $S^1\times[g-\frac{3}{2},g+\frac{1}{2}]$, every component of $(\gamma\setminus\gamma_c)\cap S^1\times[g-\frac{3}{2},g+\frac{1}{2}]$ is contained in a neighborhood of $S^1\times\{g-\frac{3}{2}\}$. Finally:

    \begin{enumerate}
        \item\label{item:1}  If $-1<\FDTC(\phi)<0$, then $\gamma_c$ is as shown in Figure~\ref{fig:-1<FDTC<0}. More precisely, the arc emanating from $c$ to the left, $\gamma_l$, and the arc emanating from $c$ to the right, $\gamma_r$, become lines of slope in the interval $(-\infty,-1)$, after pulling tight.
    \item\label{item:2} If $\FDTC(\phi)=0$, then $\gamma_c$ is as shown in Figure~\ref{fig:-1<FDTC<0}, its mirror about the vertical line $\{0\}\times\R$, Figure~\ref{fig:FDTC=0}, or Figure~\ref{fig:FDTC=01}.
     \end{enumerate}
\end{lemma}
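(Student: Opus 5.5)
We will read off the shape of $\gamma$ near the top of the cylinder from what is known about $\widehat{\HFK}(K_\phi)$ in the top two Alexander gradings, together with the surgery interpretation of $K_{1/n}$ described above.

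\textbf{Top of the curve.} First, since $K_\phi$ is fibered of genus $g$, $\widehat{\HFK}(K_\phi,g)\cong\F$ and $\widehat{\HFK}(K_\phi,i)=0$ for $i>g$. Pairing $\gamma$ with the vertical line $\{0\}\times\R$ (the $n=0$ case, i.e.\ $\widehat{\HFK}(K_\phi)$ itself) counts intersections at height $m\pm\frac12$ with Alexander grading $m$. After pulling tight, this intersection count is minimal, so it equals the rank. Hence $\gamma$ meets $\{0\}\times(g-\frac12,\infty)$ in exactly one point $c$, lying at height in $(g-\frac12,g+\frac12)$.

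\textbf{Local system on $\gamma_c$.} A component carrying a nontrivial local system of dimension $k>1$ contributes $k$ to every intersection it has. Rank one at $c$ therefore forces $\gamma_c$ to carry the trivial local system.

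\textbf{Other components near the top.} Every component other than $\gamma_c$ meets $\{0\}\times\R$ only at heights at most $g-\frac12$. A pulled-tight curve in the punctured cylinder that never crosses the vertical line above height $g-\frac12$ can be pushed down into a neighborhood of $S^1\times\{g-\frac32\}$. Indeed, such a piece cannot wrap around the puncture at height $g-\frac12$ without crossing $\{0\}\times(g-\frac12,g+\frac12)$.

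\textbf{Slopes of $\gamma_l$ and $\gamma_r$.} The remaining content is determining how $\gamma_l$ and $\gamma_r$ descend from $c$. Here we use two inputs.
\begin{enumerate}
\item The immersed-curve version of the Baldwin--Ni--Sivek / Baldwin--Vela-Vick characterization of veering in the next-to-top grading, \cite{baldwin2022floerveering,baldwin_note_2018}. This says $\widehat{\HFK}(K_\phi,g-1)$, and the differential from the top generator, detect whether $\phi$ is right-veering. In curve terms, it records on which side of the puncture at height $g-\frac12$ the arcs $\gamma_l$ and $\gamma_r$ pass.
\item The fact that $(\Sigma,\Delta^n\circ\phi)$ is the open book for the core $K_{1/n}$, with $\FDTC(\Delta^n\circ\phi)=\FDTC(\phi)+n$.
\end{enumerate}
Fix a slope $s$ and pair $\gamma$ with lines of slope $1/n$. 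The top generator of $K_{1/n}$ is unique because $K_{1/n}$ is still fibered of genus $g$. Its next-to-top grading is right-veering exactly when $\FDTC(\phi)+n>0$, and left-veering when $\FDTC(\phi)+n<0$. For $-1<\FDTC(\phi)<0$ this holds for every $n\ge1$ on one side and every $n\le0$ on the other. Translating each condition into the position of $\gamma_l$ and $\gamma_r$ relative to the pairing line through $c$ forces, in the limit over $n$, both arcs to have slopes in $(-\infty,-1)$. This gives the picture of Figure~\ref{fig:-1<FDTC<0}.

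When $\FDTC(\phi)=0$ the same analysis pins down everything except the borderline slope. The arcs may then be as in the $(-1,0)$ case, its mirror (from the $\phi^{-1}$ symmetry \eqref{eq:inverse}), or degenerate with an arc running horizontally or around the puncture. These degenerate options are Figures~\ref{fig:FDTC=0} and~\ref{fig:FDTC=01}.

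\textbf{Main obstacle.} The hard step is the precise dictionary between veering of $\Delta^n\circ\phi$ and the slopes of the pulled-tight arcs $\gamma_l,\gamma_r$. It requires care with the bigon counts giving the spectral sequence to $\widehat{\HF}$, and with the choice of $\epsilon$.
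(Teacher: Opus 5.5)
Your outline uses the same ingredients as the paper: uniqueness of $c$ and the trivial local system from fiberedness, then veering detection in the next-to-top grading for twisted monodromies. However, the step you set aside as the ``main obstacle'' is the whole content of items (1) and (2), and the limiting argument you sketch in its place is not how it works.

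The missing dictionary is as follows. By \cite[Remark 1.4]{baldwin2022floerveering}, the $E_2$-page map from Alexander grading $g$ to $g-1$ in the spectral sequence to $\widehat{\HF}$ is nonzero iff the monodromy is not weakly right-veering. In the immersed-curve picture this map counts bigons with a corner at $c$ between $\gamma$ and the pairing line.
\begin{itemize}
\item For $\phi$ itself (vertical line), non-weak right-veering produces such a bigon. Because $\gamma$ cannot wrap around a single marked point \cite[p.~992]{hanselman2018heegaard}, $\gamma_l$ must then cross $\{0\}\times(g-\frac32,g-\frac12)$ and descend with negative slope.
\item For $\Delta\circ\phi$ (line of slope $-1$ through $(0,g-\frac12)$), weak right-veering means there is no bigon, which forces the slope below $-1$.
\item Dually, if $\phi$ is weakly right-veering and $\Delta^{-1}\circ\phi$ is not, the slope lies in $(1,\infty)$.
\end{itemize}
Only $\phi$ and $\Delta^{\pm1}\circ\phi$ are needed, so no limit over $n$ enters. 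Check your sign as well: in the paper $\Delta\circ\phi$ is the core of $-1$ surgery and is read off from the slope $-1$ line. With your convention that $\Delta^n\circ\phi$ pairs with slope $1/n$, the constraint lands on the wrong line. Finally, this argument only controls $\gamma_l$. The paper obtains $\gamma_r$ by running the same argument for $\phi^{-1}$ and using the symmetry of the curve invariant, a step your proposal never sets up; you treat ``both arcs'' at once.

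Second, $\FDTC(\phi)=0$ is not a ``borderline slope'' of the same computation, because it does not determine whether $\phi$ or $\phi^{-1}$ is weakly right-veering. You have to split into cases:
\begin{itemize}
\item both $\phi$ and $\phi^{-1}$ non-weakly right-veering, which gives Figure~\ref{fig:FDTC=0} (the two arcs descend and cross);
\item exactly one of them non-weakly right-veering, which gives Figure~\ref{fig:-1<FDTC<0} or its mirror;
\item both weakly right-veering, i.e.\ $\phi$ isotopic to the identity, which gives Figure~\ref{fig:FDTC=01} (the arcs descend on either side of the marked point).
\end{itemize}
In each case you apply the two criteria above to $\phi$ and to $\phi^{-1}$ separately, using that the $\Delta^{\pm1}$-twisted maps have FDTC $\pm1$. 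No arc ``running horizontally'' appears in any of these, so your description of the degenerate cases does not match what the analysis actually yields.
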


Observe that the $1>\FDTC(\phi)>0$ case could be recovered from the $-1<\FDTC(\phi)<0$ case by using the symmetry properties of $\gamma$. The main technical input for the proof is Baldwin-Ni-Sivek's result that knot Floer homology detects non-weakly right veering monodromies; in particular~\cite[Remark 1.4]{baldwin2022floerveering}. More precisely, the induced map on the $E_2$ page of the spectral sequence from $\widehat{\HFK}(K_\phi, {[\Sigma]},g(K))$ to $\widehat{\HFK}(K_\phi, {[\Sigma]},g(K)-1)$ is non-trivial if and only if $\phi$ is non-weakly right veering.

\begin{figure}[ht]\centering
\begin{subfigure}[t]{0.3\linewidth}\centering
   
\begin{tikzpicture}
    \fill[black] (0, 0) circle (2pt) node[anchor=east] {$ g-\frac{3}{2}$};
    \fill[black] (0, 2) circle (2pt) node[anchor=east] {$g-\frac{1}{2}$};

   \draw[blue, thick]  (0.3,-0.2) .. controls (0.3, 1) and (-0.2, 1) .. (-0.2, 2) .. controls (-0.2, 2.5) and (0.2, 2.5) .. (0.2, 2) .. controls (0.2, 1) and (0.4, 1)  .. (0.4, -0.2);

    \draw[blue, thick]  (-0.2,-0.2)  .. controls (-0.2, 0.2)  and  (0.2, 0.2) .. (0.2, -0.2) ;

    \draw[blue, thick]  (-1,0.15)  -- (1, 0.15) ;
   
   \draw[black, thick] (-1, -1) -- (-1, 3);
    \draw[black, thick] (1, -1) -- (1, 3);

\end{tikzpicture}\caption{}\label{fig:-1<FDTC<0}  
 \end{subfigure}
 \hfill
    \begin{subfigure}[t]{0.3\linewidth}\centering
        
 \begin{tikzpicture}
   \fill[black] (0, 0) circle (2pt) node[anchor=east] {$ g-\frac{3}{2}$};
    \fill[black] (0, 2) circle (2pt) node[anchor=east] {$g-\frac{1}{2}$};

\draw[blue, thick]  (0.3,-0.2) .. controls (0.3, 1) and (-0.2, 1).. (-0.2, 2) .. controls (-0.2, 2.5) and (0.2, 2.5) .. (0.2, 2) .. controls (0.2, 1) and (-0.3, 1) .. (-0.3, -0.2);

 \draw[blue, thick]  (-0.2,-0.2)  .. controls (-0.2, 0.2)  and  (0.2, 0.2) .. (0.2, -0.2) ;

   \draw[black, thick] (-1, -1) -- (-1, 3);
    \draw[black, thick] (1, -1) -- (1, 3);

\end{tikzpicture}\caption{}\label{fig:FDTC=0}
 \end{subfigure}
\hfill \begin{subfigure}[t]{0.3\linewidth}\centering
        
 \begin{tikzpicture}
   \fill[black] (0, 0) circle (2pt) node[anchor=east] {$ g-\frac{3}{2}$};
    \fill[black] (0, 2) circle (2pt) node[anchor=east] {$g-\frac{1}{2}$};

\draw[blue, thick]  (0.3,-0.2)  .. controls (0.3, 1) and (0.2, 1).. (0.2, 2) .. controls (0.2, 2.5) and (-0.2, 2.5) .. (-0.2, 2) .. controls (-0.2, 1) and (-0.3, 1).. (-0.3, -0.2);

 \draw[blue, thick]  (-0.2,-0.2)  .. controls (-0.2, 0.2)  and  (0.2, 0.2) .. (0.2, -0.2) ;

   \draw[black, thick] (-1, -1) -- (-1, 3);
    \draw[black, thick] (1, -1) -- (1, 3);

\end{tikzpicture}\caption{}\label{fig:FDTC=01}
 \end{subfigure}\hfill
\caption{Immersed curves for fibered knots $K_\phi$ with $-1<\FDTC(\phi)\leq0$, as in the statement of Lemma~\ref{lem:immersedcurve|FDTC|<1}.}\label{fig:immersedcurves4}
\end{figure}

\begin{proof}[Proof of Lemma~\ref{lem:immersedcurve|FDTC|<1}]
    Consider the immersed curve $\gamma$ of $K_\phi$.  Since $K_{\phi}$ is fibered, we have that; \begin{align*}\rank\big(\widehat{\HFL}(K_{\phi}, {[\Sigma]},g)\big)=1.\end{align*} 
    
    In particular, $\gamma$ intersects the vertical ray $\{0\}\times(g-\frac{1}{2},\infty)$ exactly once, at some point $c$ in the interval $\{0\}\times (g-\frac{1}{2},g+\frac{1}{2})$ and the component of the immersed curve on which $\gamma$ lies has trivial local system, proving the first part of the Lemma. The claims about the components of $\gamma\setminus\gamma_c$ follow from the fact that they cannot intersect the ray $\{0\}\times[g-\frac{1}{2},\infty)$, and that $\gamma$ is pulled tight.

    We first indicate the behavior of $\gamma$ after it leaves $c$ to the left along the arc $\gamma_l$. We will investigate $\gamma_l$'s behavior in two cases:\begin{enumerate}
        \item\label{case1} $\phi$ is non-weakly right-veering but $\Delta\circ\phi$ is weakly right veering. 
      \item\label{case2} $\phi$ is weakly right-veering but $\Delta^{-1}\circ\phi$ is non-weakly right veering.
    \end{enumerate}
    
We begin with Case~\ref{case1}. Let $Y$ be the $3$-manifold corresponding to the abstract open book $(\Sigma,\phi)$ and suppose that $\phi$ is non-weakly right veering. The map $\widehat{\HFK}(K_\phi, {[\Sigma]},g)\to\widehat{\HFK}(K_\phi, {[\Sigma]},g-1)$ induced by the spectral sequence from $\widehat{\HFK}(K_\phi)$ to $\widehat{\HF}(Y)$ is non-trivial by~\cite[Remark 1.4]{baldwin2022floerveering}. It follows that $\gamma$ contains a sub-arc, $\gamma_l$, in a small neighborhood of ${S^1\times[g+\frac{1}{2},g-\frac{3}{2}]}$ described as follows: First, $\gamma_l$ extends to the left from the intersection point $c$, intersects the line segment $\{0\}\times(g-\frac{1}{2},g-\frac{3}{2})$. Then, since $\gamma_l$ cannot intersect the $\{0\}\times(g-\frac{1}{2},\infty)$ again and $\gamma$ cannot contain components that wrap around a basepoint by a result of Hanselman-Rasmussen-Watson~\cite[P.992]{hanselman2018heegaard}, $\gamma_l$ proceeds downwards past $S^1\times\{g-\frac{3}{2}\}$, possibly after wrapping some number of times around the $S^1$ factor, along a line of slope in the range $(-\infty, 0)$. See Figure~\ref{fig:-1<FDTC}.

    \begin{figure}[ht]
\centering

 \begin{tikzpicture}

   \draw[blue, thick]  (1,1.5) .. controls (0.5,1.75) and (-0.2, 1.6) .. (-0.2, 2) .. controls (-0.2, 2.5) and (0.2, 2.5) .. (0.2, 2);
    \draw[blue, thick]  (-1,1.5) -- (1, 0.5);
    \draw[blue, thick]  (-1,0.5) -- (0.7, -0.1 );
    \draw[red, thick]  (0,3) -- (0, 0 );
     \draw[green, thick]  (-1,3) -- (1, 1 );
      \draw[green, thick]  (-1,1) -- (0, 0 );


   \draw[black, thick] (-1, -1) -- (-1, 3);
    \draw[black, thick] (1, -1) -- (1, 3);

      \fill[blue] (0, 2.4) circle (1pt) node[anchor=west] {$\textcolor{black}{c}$};
    
    \fill[black] (0, 0) circle (2pt) node[anchor=east] {$ g-\frac{3}{2}$};
    \fill[black] (0, 2) circle (2pt) node[anchor=east] {$g-\frac{1}{2}$};

\end{tikzpicture}
  
\caption{Here $\gamma$ --- shown in blue is the immersed curve of the binding of a non right-veering open book. $\gamma_l$ must form a bigon with the red curve and cannot form a bigon with the green curve. It follows that it must be of slope in the interval $(-\infty,-1)$, after pulling tight.}\label{fig:-1<FDTC}  
\end{figure}

    Suppose additionally that $\Delta\circ\phi$ is weakly right-veering. Recall that $\widehat{\HFL}(K_{\Delta\circ\phi})$ can be recovered from $\gamma$ by intersecting with the line of slope $-1$ through $(0,g-\frac{1}{2})$, which we call $L_{-1}$. Since $L_{-1}$ and $\gamma$ cannot form a bigon --- by another application of~\cite[Remark 1.4]{baldwin2022floerveering} --- it follows that $\gamma_l$ must have slope in the interval $(-\infty,-1)$, as in Figure~\ref{fig:-1<FDTC<0}.

We now proceed to Case~\ref{case2}. Suppose that $\phi$ is weakly right-veering. By another application of~\cite[Remark 1.4]{baldwin2022floerveering}, the induced map $\widehat{\HFL}(K_\phi, {[\Sigma]},g)$ to $\widehat{\HFL}(K_\phi, {[\Sigma]},g-1)$ is trivial. It follows that $\gamma$ contains a sub-arc, $\gamma_l$, in a neighborhood of ${S^1\times[g-\frac{1}{2},g+\frac{1}{2}]}$ described as follows: $\gamma_l$ extends to the left from the intersection point $c$, and then, since the arc cannot intersect the $\{0\}\times(g-\frac{1}{2},\infty)$ again and cannot form a bigon with the line $\{0\}\times\R$, and the immersed curve invariant cannot contain components that wrap around a basepoint by~\cite[P.992]{hanselman2018heegaard},  $\gamma_l$ proceeds downwards past $S^1\times\{g-\frac{3}{2}\}$, possibly after wrapping some number of times around the $S^1$ factor, along a line of positive slope. See Figure~\ref{fig:FDTC>0}.

    \begin{figure}[ht]
\centering

 \begin{tikzpicture}


   \draw[blue, thick]  (0.2,2) .. controls (0.2,2.5) and (-0.2,2.5) .. (-0.2, 2) .. controls (-0.2,1.5) and (-0.5,0.9) .. (-1,0.7);
    \draw[blue, thick]  (1,0.7) -- (0.3,-0.2 );
   
    \draw[red, thick]  (0,3) -- (0, 0 );
    
     \draw[green, thick]  (1,3) -- (-1, 1 );
      \draw[green, thick]  (1,1) -- (0, 0 );


   \draw[black, thick] (-1, -1) -- (-1, 3);
    \draw[black, thick] (1, -1) -- (1, 3);

      \fill[blue] (0, 2.4) circle (1pt) node[anchor=west] {$\textcolor{black}{c}$};
    
    \fill[black] (0, 0) circle (2pt) node[anchor=east] {$ g-\frac{3}{2}$};
    \fill[black] (0, 2) circle (2pt) node[anchor=east] {$g-\frac{1}{2}$};

\end{tikzpicture}
  
\caption{Here $\gamma$ --- shown in blue is the immersed curve of the binding of a right-veering open book. $\gamma_l$ cannot form a bigon with the red curve and must form a bigon with the green curve.}\label{fig:FDTC>0}  
\end{figure}

    Suppose additionally that $\Delta^{-1}\circ\phi$ is non-weakly right-veering. Recall that $\widehat{\HFL}(K_{\Delta^{-1}\circ\phi})$ can be recovered from $\gamma$ by intersecting with the line of slope $1$ through $(0,g-\frac{1}{2})$, which we call $L_{1}$. Since $L_{1}$ and $\gamma$ must form a bigon, it follows that $\gamma_l$ must have slope in the interval $(1,\infty)$.

   Having treated these Cases~\ref{case1} and~\ref{case2}, we can conclude the proof by identifying which cases can correspond to each possible value of the fractional Dehn twist coefficient. If $-1<\FDTC(\phi)<0$ then $\phi$ is non- right veering, $\Delta\circ\phi$ is strongly right veering, $\phi^{-1}$ is strongly right veering, and $\phi^{-1}\circ \Delta^{-1}$ is non- right veering. 
  
   If $\FDTC(\phi)=0$ then either:
   \begin{itemize}
       \item $\phi$ and $\phi^{-1}$ are both non-weakly right-veering while $\Delta\circ\phi$ and $\Delta\circ\phi^{-1}$ are both weakly right veering --- i.e. $\phi$ sends at least one arc strictly to the left, and at least one arc is sent strictly to the right.
        \item $\phi$ is  non-weakly right-veering while $\phi^{-1}$ is weakly right-veering --- i.e. $\phi$ sends at least one arc strictly to the left, and there is no arc which is sent strictly to the right.
          \item $\phi$ is  weakly right-veering and $\Delta^{-1}\circ\phi$ is non-weakly right-veering, while $\phi^{-1}$ is non-weakly right-veering and $\Delta\circ\phi^{-1}$ is weakly right-veering --- i.e. $\phi$ sends at least one arc  strictly to the right, and there is no arc which is sent strictly to the left.
          \item $\phi$ and $\phi^{-1}$ are both weakly right veering --- i.e. $\phi$ is the identity --- so that $\Delta^{-1}\circ\phi$ and $\Delta^{-1}\circ \phi^{-1}$ are both non-weakly right-veering.

   \end{itemize}
  
   Parts~\ref{item:1} and~\ref{item:2} of the Lemma now follow from the above analysis, which allows us to determine $\gamma_l$, as well as the symmetry properties of the immersed curve invariant and knot Floer homology, which allow us to determine $\gamma_r$ --- the component of $\gamma$ emanating from $c$ to the right --- from $\phi^{-1}$.  \end{proof}

\begin{remark}
A similar proof shows that if $-1-m<\FDTC(\phi)< -m$ if and only if the slope of $\gamma_l$ is in the range $(-\frac{1}{m+1},-\frac{1}{m})$. Thus, if $K$ is a fibered knot, the slope of the relevant part of the immersed curve --- which can alternately be interpreted as a measure of twisting in the region $S^1\times[g-\frac{1}{2},g+\frac{1}{2}]$ --- encodes information about the fractional Dehn twist of $\phi$. 
\end{remark}

\begin{remark}
Observe that Figure~\ref{fig:FDTC=01} occurs if and only if $\phi$ is the identity. Thus, the immersed curve in the region $S^1\times(g-\frac{3}{2},\infty)$ determines the entire immersed curve in this case. The entire immersed curve is computed in~\cite[Section 1.4]{hanselman2018heegaard}.
\end{remark}

\begin{proof}[Proof of Proposition~\ref{prop:newknotFDTC<0}]
Let $\phi:\Sigma\to\Sigma$ be a diffeomorphism with $\FDTC(\phi)\geq 0$. Observe that $\phi$ can be written as $(\Delta)^k\circ\phi'$ where $0\leq \FDTC(\phi')<1$. Recall that $K_\phi$ is the core of $-1/k$-surgery on the binding of the open book $\phi':\Sigma\to\Sigma$. The result now follows from Lemma~\ref{lem:immersedcurve|FDTC|<1}, observing that $\widehat{\HFL}(K_{(\Delta)^k\circ\phi'})$ and $\widehat{\HFL}(K_{(\Delta)^{k+1}\circ\phi'})$ can be computed from the intersection number of $\gamma_{\phi'}$ --- the immersed curve invariant for $K_{\phi'}$ --- with lines through $(0,g-\frac{1}{2})$ of slopes $-\dfrac{1}{k}$ and $\dfrac{-1}{k+1}$ respectively, in the region $S^1\times[g-\frac{3}{2},g-\frac{1}{2}]$. 

Note that for $k=0$, the rank stays the same if we are in the cases shown in Figure~\ref{fig:-1<FDTC<0}, or Figure~\ref{fig:FDTC=0}, but increases by two if we are in the cases shown in Figure~\ref{fig:FDTC=01}, or the reflection of the case shown in Figure~\ref{fig:-1<FDTC<0} about a vertical line. For $k>0$ the rank always increases by two.
\end{proof}

We can now prove Proposition~\ref{prop:HFKsingleboundarycomponent}. 
\begin{proof}[Proof of Proposition~\ref{prop:HFKsingleboundarycomponent}]
  Suppose $K$ is as in the statement of the Proposition. Let $\phi$ be the monodromy of $K$.  It suffices to prove the case in which $\FDTC(\phi)\geq 0$; the $\FDTC(\phi)< 0$ case follows from the symmetry properties of knot Floer homology. Observe that we can write $\phi:\Sigma\to\Sigma$ as $\Delta^n\phi'$ where $\phi':\Sigma\to\Sigma$ has $0\leq \FDTC(\phi')< 1$, and $n=\lfloor|\FDTC(\phi)|\rfloor$. Let $K'$ be the knot with monodromy $\phi'$. Observe that $\rank(\widehat{\HFL}(K', {[\Sigma]},g(\Sigma)-1))\geq 1$ by~\cite[Theorem 1.1]{baldwin_note_2018}. It then follows from at least $n-1$ applications of Equation~\ref{HFK:twists-1} from Proposition~\ref{prop:newknotFDTC<0} and at most one application of Equation~\ref{HFK:twists0} from Proposition~\ref{prop:newknotFDTC<0} that $\rank(\widehat{\HFL}(K, {[\Sigma]},g(\Sigma)-1))\geq 1+\max\{2(n-1),0\}$, as desired.\end{proof}

We can use the same techniques to quickly prove results related to work of Hedden-Mark~\cite{HeddenMark}. More specifically, Hedden-Mark showed that for any fixed $3$-manifold, $Y$, there is a uniform bound on the fractional Dehn twist coefficient of any fibered knot $K$ in $Y$~\cite[Theorem 1]{HeddenMark}.
\begin{proposition}\label{prop:versionofheddenmark}
    Suppose that $Y$ admits an open book decomposition $(\Sigma,\phi)$ where $\Sigma$ has a single boundary component. If $g(\Sigma)\geq 1$ then $$\rank(\widehat{\HF}(Y))\geq \max\{2\big\lfloor|\FDTC(\phi)|\rfloor-2,1\}.$$ If $g(\Sigma)>1$ then $${\rank(\widehat{\HF}(Y))\geq \max\{4\big\lfloor|\FDTC(\phi)|\rfloor-4},1\}.$$
\end{proposition}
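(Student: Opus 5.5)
We use the immersed curve picture together with the surgery description of $\widehat{\HF}$. By the symmetry of Equation~\eqref{eq:invariance}, passing to $-Y$ replaces $\phi$ by $\phi^{-1}$ and does not change $\rank(\widehat{\HF}(Y))$. So we may assume $\FDTC(\phi)\geq 0$. Write $\phi=\Delta^n\circ\phi'$ with $n=\lfloor\FDTC(\phi)\rfloor$ and $0\leq\FDTC(\phi')<1$. Then $Y$ is $1/n$-type surgery on the binding $K_{\phi'}$ of $(\Sigma,\phi')$ inside $Y_{\phi'}$, with framing measured relative to the page. Consequently $\widehat{\HF}(Y)$ is the Floer homology of the immersed curve $\gamma$ of $K_{\phi'}$ with a line $L$ of slope $-1/n$, perturbed off the lattice by some $\epsilon\in\R\setminus\Q$. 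The rank is at least the minimal geometric intersection number of $\gamma$ with $L$ in the punctured cylinder. The lower bound $1$ is immediate, since $\widehat{\HF}$ of a closed $3$-manifold is never zero (its Euler characteristic considerations in the appropriate $\text{Spin}^c$, or simply the existence of a spectral sequence from a nonzero $\widehat{\HFL}$ with surviving top class). So the content lies in the $n\geq 2$ bounds.

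First, use Lemma~\ref{lem:immersedcurve|FDTC|<1} applied to $\phi'$, or its mirror in the cases $0<\FDTC(\phi')<1$ and $\FDTC(\phi')=0$. This shows that the component $\gamma_c$ through $c\in\{0\}\times(g-\tfrac12,g+\tfrac12)$ has trivial local system. Its two arcs $\gamma_l,\gamma_r$ leave $c$ and descend past height $g-\tfrac32$ with slopes of absolute value at least $1$, i.e.\ steeper than $L$. Second, lift to the universal cover $\R\times\R$ (punctured at $\Z\times(\Z+\tfrac12)$). The line $L$ through height near $g-\tfrac12$ crosses the strip $[g-\tfrac32,g+\tfrac12]$ horizontally over a distance of about $n$ copies of the fundamental domain. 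Each translate of the lifted arc $\gamma_l$ or $\gamma_r$ running from above height $g-\tfrac12$ down past $g-\tfrac32$ must cross $L$. These crossings cannot be removed by homotopy, because each such translate is separated from $L$ by punctures at heights $g-\tfrac12$ and $g-\tfrac32$. Counting translates shows that $L$ meets the part of $\gamma_c$ lying in $S^1\times[g-\tfrac32,g+\tfrac12]$ essentially at least $2(n-1)$ times, which gives $2\lfloor\FDTC\rfloor-2$. This mirrors the count in the proof of Proposition~\ref{prop:newknotFDTC<0}, except that it is now done with the perturbed line rather than the line through the puncture.

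For $g(\Sigma)>1$ we would use the symmetry $\gamma\mapsto-\gamma$ (the hyperelliptic involution of the punctured cylinder), which reflects heights $h\mapsto -h$. Since $g>1$, the strips $[g-\tfrac32,g+\tfrac12]$ and $[-g-\tfrac12,-g+\tfrac32]$ are disjoint. The symmetric copy of $\gamma_c$ near height $-g$ therefore contributes another $2(n-1)$ essential intersections with $L$, disjoint from the first batch, giving $4n-4$. When $g=1$ the two strips overlap, so the two batches may coincide, and this is why only the weaker bound is claimed there.

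The main obstacle is the essentialness of the intersections: we must make sure that minimal-position intersections of $\gamma$ with $L$ are really counted by rank (trivial local system on $\gamma_c$ plus pulled-tight geodesic representatives make the Floer complex have vanishing differential). We must also make sure that no cancellation occurs between the two symmetric batches or with other components of $\gamma$. I would handle this by putting both curves in pulled-tight position and invoking the fact that intersections of geodesics in the punctured plane are minimal.
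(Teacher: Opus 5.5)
Your argument is the paper's own proof. The only differences are that the paper normalizes to $\FDTC(\phi)\le 0$ while you take $\FDTC(\phi)\ge 0$, and that you spell out the intersection count in more detail. Otherwise the steps coincide: strip off $n=\lfloor|\FDTC(\phi)|\rfloor$ boundary twists, read off $\gamma_c$ near height $g$ from Lemma~\ref{lem:immersedcurve|FDTC|<1}, count at least $2(n-1)$ forced crossings of the perturbed surgery line with the steep arcs in $S^1\times[g-\frac32,g-\frac12]$, and double the count for $g>1$ using the $180^\circ$ rotation symmetry.

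The one weak spot is your justification of the bound $1$. Euler characteristic arguments give nothing when $b_1(Y)>0$. The top class of $\widehat{\HFK}$ also need not survive the spectral sequence: by Baldwin--Ni--Sivek it is killed whenever $\phi$ is not weakly right-veering. So either cite the standard non-vanishing of $\widehat{\HF}$, or use the paper's observation that $\gamma$ has a component homotopic, through the punctures, to the horizontal line, which meets every line of slope $\pm 1/n$.
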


\begin{proof}

We first give an immersed curve based proof that $\rank(\widehat{\HF}(Y))\geq 1$. Let $\gamma$ be the immersed curve of $\partial\Sigma$. Recall that the immersed curve of $\partial\Sigma$ contains a component that is homotopic to $\partial \Sigma$ --- i.e. a line of slope zero in our parameterization --- where here we allow our homotopy to pass through the basepoints~\cite[Corollary 6.6]{hanselman2016bordered}. It follows that the geometric intersection number of the line of slope $1/n$ with $\gamma$ is at least the geometric intersection number of the line of slope $1/n$ with the horizontal line, i.e at least one, as desired.

Since $\FDTC(\phi)=0$ when $g(\Sigma)=0$ --- so that the Proposition holds in this case --- it remains to show that if $g(\Sigma)\geq 1$ then $\rank(\widehat{\HF}(Y))\geq 2\big\lfloor|\FDTC(\phi)|\rfloor-2$ and that if $g(\Sigma)>1$ then $${\rank(\widehat{\HF}(Y))\geq 4\big\lfloor|\FDTC(\phi)|\rfloor-4}.$$ Since $\rank(\widehat{\HF}(Y))=\rank(\widehat{\HF}(-{Y}))$, it suffices to show this up to reversing the orientation of the underlying $3$-manifold. Perhaps after reversing the orientation of $Y$, we may assume that $\FDTC(\phi)\leq 0$. Set $n:=\lfloor|\FDTC(\phi)|\rfloor$. Let $K$ be the binding of $(\Sigma,\phi)$. Consider the immersed curve for $K_{-1/n}$. Since $-1<\FDTC(\Delta^n\circ\phi)\leq 0$, we can apply Lemma~\ref{lem:immersedcurve|FDTC|<1} to determine the immersed curve of $K_{-1/n}$ in the neighborhood $S^1\times(g-\frac{3}{2},\infty)$. Using the immersed curve formula for the Heegaard Floer homology of $1/n$ surgery on $K_{-1/n}$ we see that for $n\neq 0$;\begin{align*}\rank(\widehat{\HF}(Y))=\rank\Big(\widehat{\HF}(Y_{1/n}(K_{-1/n}))\Big)\geq  2(n-1)=2(\big\lfloor|\FDTC(\phi)|\big\rfloor-1).\end{align*} The $n=0$ case is vacuously true. Note that the lower bound comes from the curve shown in Figure~\ref{fig:-1<FDTC<0}. The other possible curves give the stronger rank bound $\rank(\widehat{\HF}(Y))\geq 2(\big\lfloor|\FDTC(\phi)|\big\rfloor)$.

If $g(\Sigma)>1$, then the symmetry properties of the immersed curve invariant dictate that in the neighborhood $S^1\times(-\infty,\frac{3}{2}-g)$ the immersed curve is given by rotating the 
immersed curve in $S^1\times(g-\frac{3}{2},\infty)$ by $180^\circ$. The rank bound then follows as before. 
\end{proof}

\section{The multi-boundary component case}\label{sec:HFL}

In this section, we determine the behavior of the rank of the link Floer homology of fibered links in the next to bottom Alexander grading under adding Dehn twists. 

Let $\Delta_i$ denote a positive Dehn twist about the $i$th boundary component of $\partial\Sigma$.  Let $V$ be a rank two vector space. Our main technical results are the following two propositions:

\begin{proposition}\label{thm:HFKaddtwists}
Let $\Sigma$ be a surface with $|\partial\Sigma|\geq 2$.   Suppose that $\phi:\Sigma\to\Sigma$ is a diffeomorphism of $\Sigma$ such that $\phi\circ\Delta_i^2$ sends some arc $a$ with exactly one endpoint on $\partial_i\Sigma$ weakly to the left at $\partial_i\Sigma\cap a$. Then\begin{align*}  
 \widehat{\HFL}(L_{\phi}, {[\Sigma]},1-G)\cong \widehat{\HFL}(L_{\phi\circ\Delta_i}, {[\Sigma]},1-G)\oplus V.\end{align*}
\end{proposition}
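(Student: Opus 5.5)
The plan is to realize adding a boundary twist as a surgery on the $i$th binding component and to compute the relevant map in a surgery exact triangle explicitly. This is the strategy announced in the introduction.

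\textbf{Setup.} Recall from Section~\ref{sec:topprelim} that $L_{\phi\circ\Delta_i}$ is obtained from $L_\phi$ by a $\pm1$ surgery on $L_i$, framed relative to the page, followed by taking the core. I would relabel the boundary components so that $\partial_i\Sigma=\partial_1\Sigma$. I would choose the basis of arcs $\{a_j\}$ so that $a_1=a$, so that $a_1$ runs from $\partial_1\Sigma$ to $\partial_2\Sigma$, and so that no other arc meets $\partial_1\Sigma$. This is possible because $a$ has exactly one endpoint on $\partial_1\Sigma$ and $|\partial\Sigma|\geq 2$.

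\textbf{Triple diagram.} I would work in the diagrams $\mathcal{H}(-L)$ of Section~\ref{sec:braidinvt}, with the roles of $\alpha$ and $\beta$ swapped. Generators in the Alexander grading adjacent to the extremal one have exactly one point in the $-\Sigma$ half, by \cite[Lemma 4.2]{tovstopyat2018transverse}. I would form a Heegaard triple by taking $\bm{\gamma}$ to be the $\bm{\beta}$ curves with $\beta_1$ replaced by its image under one further twist along $\partial_1\Sigma$ near the necklace. The third vertex of the triangle would then be obtained by the $0$/$\infty$-type resolution, giving a link whose next-to-extremal group is computable.

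\textbf{Exact triangle.} Restricting to this Alexander grading, the surgery exact triangle becomes
\[
\widehat{\HFL}(L_{\phi\circ\Delta_1},1-G)\to\widehat{\HFL}(L_\phi,1-G)\to W\to\cdots,
\]
where $W$ is read off from a diagram where the twisting region localizes near $\partial_1\Sigma$. I expect $W\cong V$, generated by the two intersection points of $\widetilde\beta_1$-type curves adjacent to $c_1$. This uses the fact that $a_1$ is the only arc meeting $\partial_1\Sigma$.

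\textbf{Key step.} The crux is to show that the connecting map between $W$ and the $L_{\phi\circ\Delta_1}$ term vanishes, so that the triangle splits as the asserted direct sum. For this I would count the triangles with corner $\bm{\theta}$ the top generator of $\widehat{\CFL}(\mathcal{H}_{\bm\beta,\bm\gamma})$, starting from the generators $\bm{c}(L)$ with $c_1$ replaced by points of $a_1\cap\phi\Delta_1^2(\overline a_1)$ near $\partial_1\Sigma$. The hypothesis that $\phi\circ\Delta_1^2$ sends $a_1$ weakly to the left at $\partial_1\Sigma$ will be used, after passing to the upper half where orientations are reversed, to show that every candidate domain would need a negative multiplicity or would cover a $z$ or $w$ basepoint. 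The argument would parallel how right-veeringness kills the map in \cite{baldwin2022floerveering}.

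\textbf{Main obstacle.} The hardest part is this triangle count: controlling all Maslov index $0$ triangles, verifying admissibility, and handling the boundary case where the arc is only weakly to the left, i.e.\ isotopic to itself. I would try first to reduce to domains supported in an annular neighborhood of $\partial_1\Sigma$ by a multiplicity argument at the basepoints. Finally, the rank count, two extra generators, then yields the isomorphism over $\F$.
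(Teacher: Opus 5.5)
There is a genuine gap: your key step tries to rule out holomorphic triangles, but the proof needs a non-vanishing statement that exhibits them.

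Your reduction is right in outline. Over $\F$, the splitting is equivalent to surjectivity of $\widehat{\HFL}(L_\phi,1-G)\to W$ in the triangle. Equivalently, the connecting map out of $W$ vanishes, or $\widehat{\HFL}(L_{\phi\circ\Delta_1},1-G)\to\widehat{\HFL}(L_\phi,1-G)$ is injective. Your key step proves none of these.

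The triangles you propose to count start at generators of the $L_\phi$ complex, in a triple where $\bm\gamma$ is the once-more-twisted $\bm\beta$. So they compute a map between the $L_\phi$ and $L_{\phi\circ\Delta_1}$ terms, not the connecting map out of $W$. Showing that every candidate domain is obstructed would show that a map \emph{out of} the $L_\phi$ term vanishes on those generators, which is the wrong direction. Since neither $\widehat{\HFL}(L_\phi,1-G)$ nor $\widehat{\HFL}(L_{\phi\circ\Delta_1},1-G)$ is known, injectivity cannot be checked by a local obstruction argument. The only term you can control is $W$, so you must exhibit two cycles in the $L_\phi$ complex whose images span $W$.

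Relatedly, you assert $W\cong V$ without proof. The paper (Lemma~\ref{lem:0surgery}) proves it as follows. After $0$-surgery on the page-framed push-off of $\partial_1\Sigma$, $L_1$ becomes a split unknot while $L\setminus L_1$ stays fibered. It then applies the K\"unneth formula and Tovstopyat-Nelip's nonvanishing of the BRAID invariant. Finally it tracks explicit generators $\bm v_\pm$ through handleslides and a destabilization.

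The missing idea is how the hypothesis enters. Because $\phi\circ\Delta_1^2$ sends $a_1$ weakly left, in a minimal-position diagram for $\phi$ the arcs $\overline a_1$ and $\phi(\overline a_1)$ meet at least three times near $\partial_1\Sigma$, at points $d_1,d_2,d_3$. Your $a_1\cap\phi\Delta_1^2(\overline a_1)$ is the wrong pair of curves.

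The point $d_1$ gives a cycle $\bm u_+$ with $f_1^*[\bm u_+]=[\bm v_+]$ via a single triangle (Lemma~\ref{lem:f1nontrivial}). This needs only one twist and yields only the weaker Proposition~\ref{thm:HFKaddtwists1}. The point $d_3$ gives the generator $\bm u_-^3$, and the heart of the proof is two further facts:
\begin{enumerate}
\item $\bm u_-^3$ is a cycle (Lemma~\ref{lem:cycle}). Here a pair of annular domains cancels by a parity count of conformal structures, and another case is excluded by Lipshitz's index formula.
\item A unique triangle, with multiplicities $3,2,1$ in the strips between $x,d_1,d_2,d_3$, gives $f_1(\bm u_-^3)\in\{\bm v_-,\bm v_++\bm v_-\}$ (Lemma~\ref{lem:surj}).
\end{enumerate}

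So the hypothesis is used to \emph{produce} holomorphic polygons, not to rule them out. Your Baldwin--Ni--Sivek analogy points the same way: the monodromy here is left-veering at $\partial_1\Sigma$, which is the regime where such maps are nonzero. Your outline also has no mechanism distinguishing $\Delta_1^2$ from $\Delta_1$. As written, it could not separate Proposition~\ref{thm:HFKaddtwists} from Proposition~\ref{thm:HFKaddtwists1}.
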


Recall here that $G=g(\Sigma)+m-1$. We have suppressed the underlying $3$-manifold in our notation for link Floer homology, and are abusing the notation by letting $\Sigma$ denote distinct Seifert surfaces for distinct knots in distinct manifolds.

\begin{proposition}\label{thm:HFKaddtwists1}
Let $\Sigma$ be a surface with $|\partial\Sigma|\geq 2$.   Suppose that $\phi:\Sigma\to\Sigma$ is a diffeomorphism of $\Sigma$ such that $\phi\circ\Delta_i$ sends some arc $a$ with exactly one endpoint on $\partial_i\Sigma$ weakly to the left at $\partial_i\Sigma\cap a$. Then either \begin{align*}  
 \widehat{\HFL}(L_{\phi}, {[\Sigma]},1-G)\cong \widehat{\HFL}(L_{\phi\circ\Delta_i}, {[\Sigma]},1-G)\oplus V,\end{align*}
\noindent or 
\begin{align*}  
 \widehat{\HFL}(L_{\phi}, {[\Sigma]},1-G)\cong \widehat{\HFL}(L_{\phi\circ\Delta_i}, {[\Sigma]},1-G).\end{align*}
\end{proposition}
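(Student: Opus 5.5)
We will prove Proposition~\ref{thm:HFKaddtwists1} by the same surgery exact triangle used for Proposition~\ref{thm:HFKaddtwists}, stopping at the point where the weaker hypothesis no longer gives surjectivity.

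\textbf{Setup.} Relabel so that $i=1$, and choose a basis of arcs $\{a_j\}$ as in Section~\ref{sec:Heegaard}. Take $a_1=a$, with one endpoint on $\partial_1\Sigma$ and the other on $\partial_2\Sigma$, and with no other arc meeting $\partial_1\Sigma$. Work with the $\alpha/\beta$-reversed diagrams $\mathcal{H}(-L)$ of Section~\ref{sec:braidinvt}, so that the grading $1-G$ corresponds to generators with exactly one intersection point in the $-\Sigma$ half (by \cite[Lemma 4.2]{tovstopyat2018transverse}, after the symmetries in Equations~\ref{eq:invariance} and~\ref{eq:inverse}).

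\textbf{The triple diagram.} Adding a boundary twist $\Delta_1$ to the monodromy changes only the curve $\overline{b}_1=\phi(\overline{a}_1)$ near $\partial_1\Sigma$. Let $\bm{\gamma}$ be the $\beta$-curves for $\phi\circ\Delta_1$, and let $\bm{\delta}$ be the curves for the $\infty$-filling, in which $\beta_1$ is replaced by a meridian of $L_1$ near $w_1,z_1$. This gives a Heegaard quadruple, and hence the link-Floer surgery exact triangle relating $L_\phi$, $L_{\phi\circ\Delta_1}$ and the $\infty$-filled link. In Alexander grading $1-G$, the $\infty$ term is a small, explicitly computable piece. Local generators near $\partial_1\Sigma$ (the two new intersection points created by the twist in the annular collar, combined with the top generator of the remaining curves) give that piece rank two, i.e.\ it is $V$. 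The triangle therefore gives a long exact sequence
\[
\widehat{\HFL}(L_{\phi\circ\Delta_1},1-G)\xrightarrow{f}\widehat{\HFL}(L_\phi,1-G)\xrightarrow{g}V\xrightarrow{h}\cdots,
\]
where the connecting map $h$ lands in grading $1-G$ of $L_{\phi\circ\Delta_1}$. The first step is to justify that the Alexander grading is preserved by all three maps and that only generators with at most one point in $-\Sigma$ contribute. I would do this with the multiplicity argument at $z_1,w_1$ together with Lipshitz's index formula.

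\textbf{Dichotomy.} The rank of $g$ is $0$, $1$ or $2$. Since $V$ is rank two and the sequence is exact, the rank of $\widehat{\HFL}(L_\phi,1-G)$ equals the rank of $\widehat{\HFL}(L_{\phi\circ\Delta_1},1-G)$ plus $2\,\mathrm{rank}(g)-2$. Under $\phi\circ\Delta_1^2$ weakly to the left, Proposition~\ref{thm:HFKaddtwists} showed that $g$ is onto. Under only $\phi\circ\Delta_1$ weakly to the left, I would show two things, which leave exactly the two isomorphisms claimed.

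\begin{enumerate}
\item One of the two basis elements of $V$ is always hit. The triangle from the BRAID-type generator, with $c_1$ replaced by the leftmost intersection of $\overline{b}_1$ with $\overline{a}_1$ near $\partial_1\Sigma$, is a small embedded triangle in the collar. Because $\phi\circ\Delta_1$ sends $a_1$ weakly left, this triangle contains no basepoint and no competing domain of index $0$.
\item The two basis elements of $V$ are interchanged up to symmetry by the conjugation/orientation reversal (Equation~\ref{eq:inverse} together with swapping the roles of $\alpha$ and $\beta$). Hence the image of $g$ is either one-dimensional spanned by a symmetric class, which I would argue is impossible by a parity/Euler characteristic count, or all of $V$. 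This rules out $\mathrm{rank}(g)=1$, so $\mathrm{rank}(g)\in\{0,2\}$.
\end{enumerate}

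If $\mathrm{rank}(g)=2$ we get the $\oplus V$ case. If $\mathrm{rank}(g)=0$, then $f$ is surjective, and the connecting map from $V$ injects into the grading $1-G$ group of $L_{\phi\circ\Delta_1}$. Counting ranks gives equality, i.e.\ the second case.

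\textbf{Main obstacle.} The hardest step is excluding $\mathrm{rank}(g)=1$. My first attempt would use the Euler characteristic: the graded Euler characteristic of $\widehat{\HFL}$ in grading $1-G$ is the next-to-top coefficient of the (multivariable) Alexander polynomial. Its change under $\Delta_1$ can be computed from the Alexander/Torres formula for $1/n$-surgery, and it changes by $0$ or by $\pm 2$ in a way consistent only with rank changes in $\{0,2\}$. If parity alone does not suffice, I would fall back on the explicit domain analysis in the collar of $\partial_1\Sigma$. There I would show that the two triangle counts into $V$ are equal, since both are governed by the same first intersection of $\phi(a_1)$ with $a_1$.
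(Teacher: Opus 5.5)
Your proposal has a genuine error in the final case analysis, and the step you call the main obstacle is aimed at a false statement. Your formula $\rank\widehat{\HFL}(L_\phi,1-G)=\rank\widehat{\HFL}(L_{\phi\circ\Delta_1},1-G)+2\rank(g)-2$ is correct, and it reads as follows:
\begin{itemize}
\item $\rank(g)=1$ is exactly the second alternative (an isomorphism);
\item $\rank(g)=2$ is the $\oplus V$ alternative;
\item $\rank(g)=0$ is a \emph{drop} by two, not equality as you assert. If $g=0$ then $f$ is onto and $h$ maps $V$ isomorphically onto $\ker f$, so $\widehat{\HFL}(L_{\phi\circ\Delta_1},1-G)\cong\widehat{\HFL}(L_\phi,1-G)\oplus V$.
\end{itemize}
So $\rank(g)\in\{0,2\}$ would not give the statement. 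Worse, your step (1) already gives $g\neq 0$, so steps (1) and (2) together would force the $\oplus V$ alternative in every case. That is false. Take $\Sigma$ the annulus and $\phi=\Delta_1^{-1}$. Then $\phi\circ\Delta_1$ is the identity, which fixes the spanning arc and hence sends it weakly to the left, so the hypothesis holds. Here $L_\phi$ is a Hopf link, whose link Floer homology has rank $2$ in Alexander grading $0$. Meanwhile $L_{\mathrm{id}}\subset S^1\times S^2$ already has rank at least $2$ in grading $0$ by Lemma~\ref{lem:identitycase} (exactly $2$ by Proposition~\ref{prop:annuluscase}). So $g$ has rank exactly one in this example. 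No symmetry or Euler characteristic argument can exclude $\rank(g)=1$, and step (2) should be discarded.

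The statement only needs your step (1): $g\neq 0$ gives $\rank(g)\in\{1,2\}$, which is precisely the two alternatives. This is how the paper argues. Lemma~\ref{lem:0surgery} shows the third term has rank two in grading $1-G$, with explicit cycle generators $\bm{v}_\pm$. Lemma~\ref{lem:f1nontrivial} shows the triangle map sends $[\bm{u}_+]$ to $[\bm{v}_+]$ via a domain shown to be unique. Here $\bm{u}_+$ is the BRAID generator with $c_1$ replaced by the point $d_1\in\alpha_1\cap\beta_1$ lying in $-\Sigma$, essentially your step (1) generator.

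To carry out step (1) you also need to set up the third term correctly. It is not an ``$\infty$-filling'' with $\beta_1$ replaced by a meridian of $L_1$. It is $0$-surgery, with page framing, on a push-off $\eta$ of $\partial_1\Sigma$, so $\beta_1$ is replaced by a parallel copy $\gamma_1$ of $\partial_1\Sigma$ in $-\Sigma$. In that manifold $L_1$ becomes a split unknot and $L\setminus L_1$ is fibered with capped-off page. The rank-two computation then comes from fiberedness, the K\"unneth formula, and nonvanishing of the BRAID invariant. Exhibiting two local generators, as you propose, does not compute the homology.
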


Note that these statements play the same role as Proposition~\ref{prop:newknotFDTC<0};  while we have stated the results in this section in terms of isomorphisms of vector spaces, as opposed to ranks of vector spaces in the previous section, this distinction is purely cosmetic, since we are ignoring other gradings in this paper. We prove Proposition~\ref{thm:HFKaddtwists}  using the following surgery exact triangle. This triangle is essentially that given in~\cite[Theorem 8.2]{Holomorphicdisksandknotinvariants} translated into our context, where, in particular, we are not ``knotifying" our links.

\begin{lemma}\label{lem:generalexact}
    
If $\mathcal{L}$ is a link in a $3$-manifold $Z$ and $\eta$ is a framed knot in the complement of $\mathcal{L}$ then there is an exact triangle:

\begin{equation}\label{eq:generaltriangle}
\begin{tikzcd}
   \widehat{\HFL}(Z_1(\eta),\mathcal{L}) \ar[rr,"f_3^*"]&& \widehat{\HFL}(Z,\mathcal{L})\ar[dl,"f_1^*"]\\&\ar[ul,"f_2^*"] \widehat{\HFL}(Z_{0}(\eta),\mathcal{L})&
 \end{tikzcd}
\end{equation}
Moreover, $f_1^*$ is induced by a chain level count of pseudo-holomorphic triangles and if $\eta$ is in the complement of a Seifert surface for $\mathcal{L}$ then the exact triangle splits over Alexander gradings.
\end{lemma}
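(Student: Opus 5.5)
The plan is to obtain the triangle from the Ozsváth--Szabó surgery exact triangle for knot Floer homology~\cite[Theorem 8.2]{Holomorphicdisksandknotinvariants}, applied to the knotification of $\mathcal{L}$. Write $\kappa(\mathcal{L})\subset Z\#(S^1\times S^2)^{|\mathcal{L}|-1}$ for the knotification. Since $\eta$ lies in the complement of $\mathcal{L}$, we may choose the connected-sum spheres and the bands used in knotifying to miss a neighbourhood of $\eta$. Then $\eta$ becomes a framed knot in the complement of $\kappa(\mathcal{L})$, and surgery on $\eta$ commutes with knotification:
\[
\kappa(\mathcal{L}\subset Z_r(\eta))=\kappa(\mathcal{L}\subset Z)_r(\eta),\qquad r\in\{0,1,\infty\}.
\]
The cited theorem gives an exact triangle on $\widehat{\HFK}$ relating $\infty$-, $0$- and $1$-surgery on a framed knot disjoint from the knot under consideration. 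Since $\widehat{\HFL}$ of a link agrees with $\widehat{\HFK}$ of its knotification, as recalled in the introduction, this is exactly Triangle~\eqref{eq:generaltriangle}.

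For the chain-level description I would redo the argument directly on a pointed multi-basepoint diagram, without knotifying, as the remark preceding the lemma suggests. Take a Heegaard diagram $(S,\bm{\alpha},\bm{\beta},\bm{z},\bm{w})$ for $(Z,\mathcal{L})$ in which $\eta$ is isotopic to a meridian-dual curve, so that $\eta$ appears as a framing arc lying on the torus region of $\beta_g$ and away from all basepoints. Let $\bm{\gamma}$ be obtained from $\bm{\beta}$ by replacing $\beta_g$ with a curve of framing $0$, and let $\bm{\delta}$ be obtained by replacing it with a curve of framing $1$; the other curves are small Hamiltonian translates. Then $(S,\bm{\alpha},\bm{\gamma})$ and $(S,\bm{\alpha},\bm{\delta})$ represent $\mathcal{L}$ in $Z_0(\eta)$ and $Z_1(\eta)$. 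The three maps are defined by counting triangles against the top-graded cycles $\bm{\theta}_{\beta\gamma}$, $\bm{\theta}_{\gamma\delta}$ and $\bm{\theta}_{\delta\beta}$. In particular $f_1$ is a triangle count, which gives the ``moreover'' clause.

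Exactness then follows from the standard argument~\cite[Section 9]{ozsvath2004holomorphic}. Compositions are nullhomotopic by quadrilateral counts, and the triple composites are quasi-isomorphisms because of the small-triangle filtration. The hypotheses needed are unchanged in the presence of extra basepoints, since every domain involved avoids the local genus-one region containing $\beta_g$, $\gamma_g$ and $\delta_g$ and so has zero multiplicity at all $z_i$ and $w_i$. I expect the main obstacle to be admissibility together with this basepoint bookkeeping: one must check that winding $\beta_g$, $\gamma_g$ and $\delta_g$ makes every triply and quadruply periodic domain admissible while keeping the relevant small triangles basepoint-free. If this direct check proves awkward, I would fall back on the knotification reduction of the first paragraph and use it only for exactness.

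For the splitting, suppose $\eta$ is disjoint from a Seifert surface $F$ for $\mathcal{L}$. Then $F$ survives in each surgered manifold, so it defines an Alexander grading on all three groups. Every triangle or rectangle class $\psi$ counted has boundary in $\bm{\alpha}$ and in curves meeting $F$ identically, since $\beta_g$, $\gamma_g$ and $\delta_g$ lie in a neighbourhood of $\eta$, which misses $F$. The Alexander grading change is $n_{\bm{z}}(\psi)-n_{\bm{w}}(\psi)$ plus a term given by pairing the periodic part of $\psi$ with $F$, and this pairing vanishes because $F\cap\eta=\emptyset$. Hence the maps preserve $A_{[F]}$, and the triangle splits over Alexander gradings.
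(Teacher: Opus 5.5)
Your proposal is correct and is essentially the paper's argument: the paper's sketch is exactly your second paragraph, rerunning the Ozsv\'ath--Szab\'o three-manifold proof on a multi-pointed diagram and observing that every map is filtered with respect to the extra link basepoints. Your knotification paragraph is the existing literature version, which the paper sets aside because it needs $f_1^*$ as a triangle count on the multi-pointed diagram itself; two phrasings are worth tidying. The exactness criterion is that $f_{i+2}\circ H_i+H_{i+1}\circ f_i$ is a quasi-isomorphism, whereas the literal triple composite is nullhomotopic. The extra basepoints are harmless because the small polygons driving that argument lie in the basepoint-free genus-one region, not because the domains avoid that region.
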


Here we are abusing notation by allowing $\mathcal{L}$ to denote links in three distinct manifolds. This result is well known to experts, but we were unable to find a specific reference in the literature --- for example the proof of the corresponding fact is only proven for ``knotified" links in~\cite[Theorem 8.2]{Holomorphicdisksandknotinvariants} --- so we include a proof sketch here for the sake of completeness. This essentially amounts to restating the proof of~\cite[Theorem 8.2]{Holomorphicdisksandknotinvariants}.
\begin{proof}[Proof Sketch]
A version of this surgery exact triangle was proven for 3-manifolds in~\cite[Section 9]{OSHoldisks3propapp}. The proof of the three manifold case generalizes to the link case since all of the maps used in the proof in the $3$-manifold case are filtered with respect to the filtration induced by the extra basepoints encoding the link $L$.
\end{proof}

We consider now a special case of Lemma~\ref{lem:generalexact}. We consider the Heegaard diagram $(S,\bm{\beta},\bm{\alpha},\bm{z},\bm{w})$ for $(-Y,-L)$ as constructed in Section~\ref{sec:Heegaard} from an open book decomposition $(\Sigma,\phi)$ for a manifold $Y$. We take:\begin{enumerate}
    \item $Z$ to be $-{Y}$.
    
    \item ${\eta}$ to be the framed knot in $-{Y}$ consisting of a push-off of the $1$st boundary component of $\partial\Sigma$ into the $\beta$-handlebody with framing induced by $-\Sigma$ .
    
    \item $\mathcal{L}\subset Z$ to be $-L_\phi\subset-{Y}$, the boundary of $-\Sigma$ in $-{Y}$. Consequently $\mathcal{L}\subset Z_1$ is $-L_{\phi\circ\Delta_1}\subset(-{Y})_1({\eta})$, while $\mathcal{L}\subset Z_0$ we denote by $-{L}\subset (-{Y})_0({\eta})$. 
\end{enumerate}

    To see why ${\mathcal{L} \subset Z_{1}}$ is $-L_{\phi\circ\Delta_1}$: first observe that the manifold with open book decomposition $(\phi \circ \Delta_1, \Sigma)$ is homeomorphic to $Y_{-1}(\eta)$. By reversing the orientation, this gives us that $-({Y_{-1}(\eta)})$ is homeomorphic to the orientation reversal of the open book decomposition $(\phi \circ \Delta_1, \Sigma)$ whose binding is $-{L_{\phi\circ\Delta_1}}$. But $-(Y_{-1}(\eta))$ is homeomorphic to $(-{Y})_{1}({\eta})$, giving us the desired relationship.

   Restricting to Alexander grading $1-G$ (recalling that $G=g(\Sigma)+|\partial\Sigma|-1$) the exact triangle~(\ref{eq:generaltriangle}) then reduces to:

\begin{equation}\label{eq:exactneg}
\begin{tikzcd}[column sep= 0cm]
    \widehat{\HFL}((-{Y})_{1}({\eta}),-{L_{\phi\circ\Delta_1}},{-[\Sigma]},1-G)\ar[rr,"f_3^*"]&& \widehat{\HFL}(-{Y},-{L_{\phi}},{-[\Sigma]},1-G)\ar[dl,"f_1^*"]\\& \widehat{\HFL}((-{Y})_{0}({\eta}),-{L},{-[\Sigma]},1-G)\ar[ul,"f_2^*"]&
\end{tikzcd}
\end{equation}

\noindent Here again $f_1^*$ is induced by counts of pseudo-holomorphic triangles.

Our strategy for proving Proposition~\ref{thm:HFKaddtwists} consists of two steps. First we show that $$\widehat{\HFL}(-{Y}_{0}({\eta}),-{L},{-[\Sigma]},1-G)\cong V$$ and is generated by classes that can be explicitly found in a specially chosen Heegaard diagram; see Lemma~\ref{lem:0surgery}. Then, under appropriate hypotheses on arcs as in the statement of Proposition~\ref{thm:HFKaddtwists}, we show that $f_1^*$ is surjective by finding generators of $\widehat{\HFL}(-{Y},-{L_{\phi}},{-[\Sigma]},1-G)$ that map to the two generators of $\widehat{\HFL}(-{Y}_{0}({\eta}),-{L},{-[\Sigma]},1-G)$. This implies that $f_{3}^*$ is injective and since $\im f_{3}^* = \ker f_{1}^*$, this concludes the proof. The proof of Proposition~\ref{thm:HFKaddtwists1} is similar, but we only show that $f_1^*$ is non-trivial.

\subsection{Capping off Boundary components} We first investigate $\widehat{\HFL}(-{Y_{0}(\eta)},-{L})$. $(-{Y_{0}(\eta)},-{L})$ admits a Heegaard diagram $\mathcal{H}_0(-{L}):=(S,\bm{\gamma},\bm{\alpha},\bm{z},\bm{w})$ as shown in Figure~\ref{fig:cleanedsurgery}. Specifically we may take a Heegaard diagram $(S,\bm{\beta},\bm{\alpha},\bm{z},\bm{w})$ adapted to the binding of the open book $(\Sigma,\phi^{-1})$ which is a Heegaard diagram for $-{L}$ in $-{Y}$ --- and any collection of arcs $\{a_i\}$ such that only $a_1$ intersects $\partial_1\Sigma$ and does so at exactly one point. See Section~\ref{sec:Heegaard} for more details. Note that $\beta_i$ and $\widetilde{\beta}_j$ denote the beta-curves. Recall too that $\widetilde{\beta}_i, \widetilde{\alpha}_i$ denote the necklace curves.

 \begin{figure}[ht]
\centering

   \begin{tikzpicture}


\draw[ thick]
(-1,-3.5)
.. controls (3,-3.5) and (3,3.5) ..
(-1,3.5)
.. controls (-7,3.5) and (-7,-3.5) ..
(-1,-3.5);

\draw[red, thick] (0,0) ellipse (1 and 3.0);

\draw[red, thick] (-3,0) ellipse (1.8 and 1.0);

\draw[ draw=black, thick] (0,2) ellipse (0.5 and 0.3);
\draw[draw=black, thick] (0,-2) ellipse (0.5 and 0.3);
\draw[draw=black, thick] (-3.25,0) ellipse (0.3 and 0.5);

\node at (-4,0) {$w_1$};
\node at (-5.1,0) {$z_1$};
\node at (-1.5,0) {$w_2$};
\node at (-2.5,0) {$z_2$};

\node at (-4.5,1.5) {$\gamma_1$};

\node at (-3,1.2) {$\alpha_1$};

\draw[green, thick] (-4.6,0) ellipse (0.8 and 0.5);

\draw[green, thick]
(-2,0)
.. controls (-2,-1) and (-1.0,-1.0) ..
(0,-1.7);

\draw[green, dashed, thick]
(0,-1.7) .. controls (1,-1.5) and (1.0,-1.4) ..
(1.7,-1.5);

\draw[green, thick]
(1.7,-1.5) .. controls (0,-1) and (-1.8,-0.5) ..
(-1.8,0);

\draw[green, thick]
(-3.5,0.3) .. controls (-4,1.5) and (-5,1.5) ..
(-5.3,1.2);

\draw[green, dashed, thick]
(-3.5,0.3) .. controls (-4,1) and (-5,1) ..
(-5.3,1.2);


\draw[red, thick]
(-3.5,0.2) .. controls (-4.5,1) and (-4.5,-1) ..
(-3.5,-0.2);

\draw[red, thick,dashed]
(-3.5,0.2) .. controls (-4,0.8) and (-5,0.8) ..
(-5.5,0.2);

\draw[red, thick,dashed]
(-3.5,-0.2) .. controls (-4,-0.8) and (-5,-0.8) ..
(-5.5,-0.2);

\draw[red, thick]
(-5.5,0.2) .. controls (-4.7,1) and (-4.7,-1) ..
(-5.5,-0.2);


\draw[green, thick]
(-2.2,0)
.. controls (-2.2,-1.5) and (-0.5,-2.5) ..
(0,-2.5)
.. controls (0.1,-2.5) and (0.5,-2.5) ..
(0.6,-2)
.. controls (0.6,-0.7) and (-1.9,-1) .. (-1.9,0);

\draw[red, thick, dashed]
(0,1.7)
.. controls (0.25,0.5) and (0.25,-0.5)..
(0.0,-1.7);
\draw[red, thick]
(0,1.7)
.. controls (-0.25,0.5) and (-0.25,-0.5)..
(0.0,-1.7);

\node at (-3.8,1.1) {$x$};
\filldraw[black] (-3.8,0.85) circle (2pt);

\node at (-3.9,-0.65) {$v_+$};
\filldraw[black] (-4,-0.35) circle (2pt);
\node at (-5,-0.7) {$v_-$};
\filldraw[black] (-5.1,-0.4) circle (2pt);

\end{tikzpicture}
    \caption{ $\mathcal{H}_0(-{L})$, a Heegaard diagram for $-{L}$ in the orientation reversal of $0$-surgery on a component of the binding of a multi-component fibered link. The green curves are the $\gamma$ curves and the red curves are the $\alpha$ curves.}\label{fig:cleanedsurgery}
   \end{figure}

We replace $\beta_1$ with a new curve, which we denote $\gamma_1$, given by a push-off of $\partial_1\Sigma$ into $-{\Sigma}$. The other $\beta$ and $\widetilde{\beta}$ curves we leave unchanged, but we re-label them as $\gamma$ and $\widetilde{\gamma}$ curves. This yields the desired Heegaard diagram.

Let $c_i'$ denote the intersections points between the $\gamma_i$ and $\alpha_i$ curves for $i>1$ and $\widetilde{c}_i'$ denote the intersections between the $\widetilde{\gamma}_i$  and the $\widetilde{\alpha}_i$ curves that correspond to the BRAID invariant --- see Section~\ref{sec:braidinvt}. Let $v_\pm$ denote the two intersection points between $\widetilde{\gamma}_1$ and $\widetilde{\alpha}_1$ as shown in Figure~\ref{fig:cleanedsurgery}. Let $x$ denote the unique element in $\gamma_1\cap\alpha_1$. Set $\bm{v}_\pm:=\{v_\pm, x,c_2',\dots c_N',\widetilde{c}_3',\widetilde{c}_4'\dots \widetilde{c}_{n}'\}\in\widehat{\CFL}(\mathcal{H}_0(-{L}))$.  Note that $\widetilde{c}_2'$ does not appear in this expression because in the Heegaard diagram for $L$ we have chosen, the second boundary component of $\Sigma$ does not have necklace curves, so that $\widetilde{c}_2'$ is not even defined.  Here $n:=|\partial\Sigma|$ and $N:=2g(\Sigma)+n-1$. We will show that these classes are non-trivial in homology. 

\begin{remark}\label{rem:notation}
The proof of Lemma \ref{lem:0surgery} and later arguments in Section \ref{subsec:exacttriangle} will involve several different Heegaard diagrams for different parts of the exact triangle (\ref{eq:exactneg}), and at many points these diagrams will have to be superimposed. In order for later arguments to be easier to parse, we will use the prime notation $c_{i}'$ and $\widetilde{c_{i}}'$ for the intersection points corresponding to the BRAID invariant whenever we are working with any of the Heegaard diagrams associated to the $0$-surgery manifold in Lemma \ref{lem:0surgery}. While simplifying exposition further on, this will unfortunately result in different points on the same picture having the same label in one part of Lemma \ref{lem:0surgery}. We will alert the reader when that happens.
\end{remark}

   \begin{figure}[ht]\centering

        \centering
        \begin{tikzpicture}[scale=0.7]


\draw[red,thick] (0,0) circle(3);

\draw[gray,thick, dashed] (0,0) circle(1);
\draw[gray,thick,dashed] (0,0) circle(5);

\draw[thick, green] (160:1) -- (160:5);

\draw[thick, green] (-20:1) -- (-20:5);
\draw[thick, green] (50:1) -- (50:5);
\draw[thick, green] (70:1) -- (70:5);

\draw[thick, green] (175:1) -- (175:5);
\draw[thick, green] (185:1) -- (185:5);

\draw[gray] (-5,0)--(-1,0);
\draw[gray] (5,0)--(1,0);

 \node at (-4,0) {\( z_1 \)};
   \node at(-2,0) {\( w_1 \)};

    \node at(2.5,0) {\( z_2 \)};
     \node at(160:4.5) {\( \gamma_1 \)};
       \node at(185:4.5) {\( \widetilde{\gamma}_1 \)};\

         \node at(270:2.5) {\( \alpha_1 \)};


    \draw[->, cyan, thick] (50:3.2) arc[start angle=50, end angle=159, radius=3.2];
      \draw[->, cyan, thick] (70:2.8) arc[start angle=70, end angle=159, radius=2.8];

     \draw[->, cyan, thick] (-20:3.3) arc[start angle=-20, end angle=-199, radius=3.3];

            \draw[->, cyan, thick] (175:2.6) arc[start angle=175, end angle=159, radius=2.6];
             \draw[->, cyan, thick] (185:2.8) arc[start angle=185, end angle=159, radius=2.8];

\end{tikzpicture}

\caption{A neighborhood of $\alpha_1$ in $\mathcal{H}_0(-{L})$, a Heegaard diagram for $-{L}$ in the orientation reversal of $0$-surgery on a component of the binding of a multi-component fibered link. The cyan arrows indicate some of the handleslides needed to obtain $\mathcal{H}_0'(-{L})$ in the proof of Lemma~\ref{lem:0surgery}. The lower half of the annulus lies in $\Sigma$, the upper half in $-\Sigma$.}~\label{fig:stabilizedfiberedsplitunknot} 
\end{figure}

\begin{lemma}\label{lem:0surgery}
    $\widehat{\HFL}(\mathcal{H}_0(-{L}), {-[\Sigma]},1-G)\cong \F\langle\bm{v}_+,\bm{v}_-\rangle$.
\end{lemma}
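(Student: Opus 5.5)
We will identify $(-Y_0(\eta),-L)$ topologically and then read off the answer from a simplified Heegaard diagram. Zero-surgery on a push-off of $\partial_1\Sigma$ with page framing caps off the first boundary component of the page. The resulting link $-L$ in $-Y_0(\eta)$ is, up to a connected sum with $S^1\times S^2$ in which the first component becomes a split unknot-type component, the binding of the open book on the capped surface $\Sigma'=\Sigma\cup_{\partial_1\Sigma}D^2$ (with $\partial_1$ component split off). The claim is that the next-to-bottom Alexander grading is rank two, generated by the two necklace-type generators differing only at $\widetilde{\gamma}_1\cap\widetilde{\alpha}_1$. Rather than argue abstractly, we work directly with the diagram $\mathcal{H}_0(-L)$ of Figure~\ref{fig:cleanedsurgery}.

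\textbf{Step 1: destabilize near $\alpha_1$.} Since only $a_1$ meets $\partial_1\Sigma$, the curve $\gamma_1$ meets $\alpha_1$ in the single point $x$. We then perform the handleslides indicated by the cyan arrows in Figure~\ref{fig:stabilizedfiberedsplitunknot}: slide every other $\gamma$-curve (and $\widetilde{\gamma}_1$) crossing $\alpha_1$ over $\gamma_1$ so that they no longer meet $\alpha_1$, keeping clear of the basepoints. This gives a diagram $\mathcal{H}_0'(-L)$ in which $\alpha_1$ meets only $\gamma_1$, once. These slides avoid $z_1,w_1$ (the arcs run through regions free of basepoints, as the figure shows), so they induce isomorphisms on $\widehat{\HFL}$ preserving the Alexander grading. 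The generators $\bm v_\pm$ correspond to the evident generators of $\mathcal{H}_0'$ because $x,v_\pm,c_i',\widetilde c_i'$ are unaffected away from the slide region. Destabilizing along the pair $(\alpha_1,\gamma_1)$ then yields a diagram splitting as two pieces. The first is a genus-zero piece containing $\widetilde\alpha_1,\widetilde\gamma_1,z_1,w_1$, a diagram of an unknot component in $S^1\times S^2$ (or an unlinked component) whose $\widetilde{\alpha}_1\cap\widetilde{\gamma}_1=\{v_+,v_-\}$ with no differential between them, since the two bigons contain $z_1$ and $w_1$ respectively. The second is the diagram adapted to the binding of $(\Sigma',\phi^{-1}|)$ with one fewer arc.

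\textbf{Step 2: Künneth and top-grading input.} The chain complex then factors as a tensor product, $\F\langle v_+,v_-\rangle\otimes\widehat{\CFL}(\text{binding of }\Sigma')$, with Alexander gradings adding. We then use the grading formula of \cite[Lemma 4.2]{tovstopyat2018transverse}: generators in grading $1-G$ are exactly those with the minimal number of intersection points in $-\Sigma$ plus one. Equivalently, $1-G$ is the extremal grading of the capped binding, shifted by the contribution $v_\pm$ (both in the same Alexander grading, as they are separated from each other only by the $\widetilde\gamma_1$-side containing no $z$ or $w$ difference in $[\Sigma]$ pairing). By Tovstopyat-Nelip's result recalled in Section~\ref{sec:braidinvt}, that extremal grading of the fibered binding of $\Sigma'$ is $\F$, generated by the BRAID class $\{c_2',\dots,\widetilde c_n'\}$. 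Tensoring gives $\F\langle\bm v_+,\bm v_-\rangle$.

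\textbf{Main obstacle.} The main obstacle is the grading bookkeeping: we must check that the capped open book is still fibered with the correct $G'$, and that the shift caused by capping and by the split component places exactly this tensor product in grading $1-G$ rather than in $-G$. The degenerate case where $\Sigma'$ is a disk, with $m=2$ and genus $0$, also needs care. We would verify the grading by directly counting intersection points in $-\Sigma$ for $\bm v_\pm$, $x$ contributing one point upstairs, and check that no other generator has that count after destabilization.
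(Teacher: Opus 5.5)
Your overall strategy matches the paper's. You slide the $\gamma$-curves off $\alpha_1$ over $\gamma_1$, destabilize, and view $-L$ as the fibered link $-L\setminus L_1$ (page $\Sigma_0$, the capped surface) plus a split unknotted component. You then apply K\"unneth and use Tovstopyat-Nelip/Vela-Vick to identify the extremal grading of $-L\setminus L_1$ with $\F$, generated by the BRAID class.

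The grading bookkeeping you single out as the main obstacle is routine. We have $G'=g(\Sigma)+(n-1)-1=G-1$ and $[\Sigma]=[\Sigma_0\cup D_1]$ in $-Y_0(\eta)$, and the unlink factor sits in grading $0$, so the product lands in $-G'=1-G$. Separately, the rank-two statement for the split piece should come from the K\"unneth formula for the two-component unlink, as in the paper. ``One basepoint in each of two bigons'' is not the right mechanism: in the paper's picture $z_1,w_1$ both lie in the lens, and the two basepoint-free crescents cancel mod $2$.

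The genuine gap is the transfer back to $\mathcal{H}_0(-L)$. The lemma is about the specific chains $\bm v_\pm$ in that diagram, and that is exactly how it is used later: Lemmas~\ref{lem:f1nontrivial} and~\ref{lem:surj} show $f_1$ hits $\bm v_\pm$. You only compute the destabilized diagram and assert that $\bm v_\pm$ ``correspond to the evident generators'' because $x,v_\pm,c_i',\widetilde c_i'$ are ``unaffected away from the slide region.'' That inference is not valid, for two reasons:
\begin{itemize}
\item The isomorphisms induced by handleslides and isotopies are holomorphic-triangle counts, not identity maps on intersection points.
\item $\widetilde\gamma_1$ is itself one of the slid curves. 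In $\mathcal{H}_0(-L)$ the necklace $\widetilde\gamma_1$ meets $\widetilde\alpha_1$ in four points, whereas after the slides $\widetilde\gamma_1'$ meets $\widetilde\alpha_1'$ in only two. So $v_\pm$ are not literally preserved.
\end{itemize}

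What is needed is the paper's argument. Stabilize back to $\mathcal{H}_0''(-L)$, and write the isomorphism $f$ to $\mathcal{H}_0(-L)$ as a composite of triangle maps. Then show that every $\bm y$ with $\langle f(\bm v'_\pm),\bm y\rangle\neq 0$ is $\bm v_+$ or $\bm v_-$:
\begin{itemize}
\item the basepoints adjacent to each $c_i'$ and $\widetilde c_j'$ force the small triangles there (Figure~\ref{fig:standardtraingles});
\item $x$ is the only point of $\gamma_1$ on an $\alpha$-curve;
\item the other two points of $\widetilde\gamma_1\cap\widetilde\alpha_1$ give generators in the wrong Alexander grading.
\end{itemize}
Since $f$ is an isomorphism onto a rank-two group, $f(\bm v'_+)$ and $f(\bm v'_-)$ are then independent cycles spanning $\F\langle\bm v_+,\bm v_-\rangle$. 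This forces $\bm v_\pm$ to be cycles whose classes form a basis. Without this step you have only shown that the group has rank two.
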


 For the proof we appeal to work of Vela-Vick~\cite{VelaVicktransverseinvariant} and Tovstopyat-Nelip~\cite{tovstopyat2018transverse} showing that the transverse invariants of transverse approximations of the bindings of open books are non-trivial in link Floer homology.

\begin{proof} In this proof it will be useful to specify which link a Heegaard diagram is for: for instance, $\mathcal{H}_{0}$ will be referred to as $\mathcal{H}_{0}(-{L})$. We first produce a Heegaard diagram $\mathcal{H}'_0(-{L})=(S',\bm{\gamma'},\bm{\alpha'},\bm{z},\bm{w})$ for $-{L}$ viewed as a link in $-{Y}_0({\eta})$, as illustrated in Figure~\ref{fig:fiberedsplitunknot1}. To do so, observe that $-{L\setminus L_1}$ is fibered in $-{Y}_0({\eta})$ and $-{L_1}$ is a split unknotted component. Here $L_1$ is the component of $L$ corresponding to the basepoints $z_1$ and $w_1$. $\mathcal{H}'_0(-{L})$ is obtained from $\mathcal{H}_0(-{L})$, which is defined before the statement of the lemma, as follows: for each intersection point $\gamma_i\cap\alpha_1\cap-{\Sigma}$, perform a handleslide of $\gamma_i$ over $\gamma_1$ in a neighborhood of $\alpha_1\cap-{\Sigma}$. Recall here that $-{\Sigma}$ is the ``upper half" of $S'$. Likewise, for each intersection point $(\gamma_i\cup\widetilde{\gamma}_1)\cap\alpha_1\cap\Sigma$ perform a handleslide of $\gamma_i$ over $\gamma_1$ in a neighborhood of $\alpha_1\cap\Sigma$. See Figure~\ref{fig:stabilizedfiberedsplitunknot}. Note that at this stage we have a stabilized Heegaard diagram, $\mathcal{H}_0''(-{L})$ since $\alpha_1$ and $\gamma_1$ intersect once and do not intersect any of the other curves.  $\mathcal{H}'_0(-{L})$ is then defined as the Heegaard diagram obtained by destabilizing $\mathcal{H}_0''(-{L})$ with $\bm{\alpha'}$ and $\bm{\gamma'}$ the natural images of $\bm{\alpha}\setminus\{\alpha_1\}$ and $\bm{\gamma}\setminus\{\gamma_1  \}$ in $\mathcal{H}_{0}(-{L})$ under this procedure.

\begin{figure}[ht]\centering

   \begin{tikzpicture}

\draw[thick] (-1,0) ellipse (3 and 3);

\draw[red, thick] (0,0) ellipse (1 and 2.5);

\draw[ draw=black, thick] (0,1.5) ellipse (0.5 and 0.3);
\draw[draw=black, thick] (0,-1.5) ellipse (0.5 and 0.3);

\node at (-3,0.25) {$z_1$};
\node at (-3,-0.25) {$w_1$};
\node at (-1.3,0) {$w_2$};
\node at (-2.2,0) {$z_2$};

\node at (-2.5,-0.6) {$\tilde{\gamma}_1'$};
\node at (-3.7,-0.6) {$\tilde{\alpha}_1'$};

\draw[green, thick] (-3,0) circle (0.5);
\draw[red] (-3.1,0) circle (0.5);

\draw[green, thick]
(-1.8,0)
.. controls (-1.8,-2) and (-1.0,-1.0) ..
(0,-1.2);

\draw[green, dashed, thick]
(0,-1.2) .. controls (1,-1) and (1.0,-1) ..
(1.8,-1);

\draw[green, thick]
(1.8,-1) .. controls (0,0) and (-1.6,-1) ..
(-1.6,0);


\draw[green, thick]
(-2,0)
.. controls (-2,-2) and (-0.5,-2) ..
(0,-2)
.. controls (0.1,-2) and (0.5,-2) ..
(0.6,-1.5)
.. controls (0.6,0) and (-1.7,-1.5) .. (-1.7,0);

\draw[red, thick, dashed]
(0,1.2)
.. controls (0.25,1) and (0.25,-1)..
(0.0,-1.2);

\draw[red, thick]
(0,1.2)
.. controls (-0.25,1) and (-0.25,-1)..
(0.0,-1.2);

\node at (-3,0.8) {$v_+'$};
\filldraw[black] (-3,0.5) circle (2pt);

\node at (-3,-0.8) {$v_-'$};
\filldraw[black] (-3,-0.5) circle (2pt);

\end{tikzpicture}

\caption{A Heegaard diagrams, $\mathcal{H}_0'(-L)$, for the split sum of a fibered link and an unknot. The portions of the $\gamma$ curves in $-\Sigma$ are not shown. }\label{fig:fiberedsplitunknot1}
\end{figure}

Note that we are assuming, without loss of generality, that $L_2$ is the component of $L\setminus L_1$ such that the corresponding boundary component of $\Sigma$ does not have ``necklace" curves. Let $\widetilde{\alpha}_1',\widetilde{\gamma}_1'$ be the null-homologous alpha and gamma-curves enclosing the basepoints $z_1$ and $w_1$ in $S'$, the Heegaard surface from $\mathcal{H}_0'(-{L})$, see Figure \ref{fig:fiberedsplitunknot1}. By an abuse of notation, as outlined in Remark \ref{rem:notation}, we let $c_i'$ denote the intersection points $\alpha_i'\cap\gamma_i'$ for $i> 1$ and $\widetilde{c}_i'$ denote the intersection points in $\widetilde{\alpha}_i'\cap\widetilde{\gamma}_i'$ for $i>2$. These intersections points correspond to the BRAID invariant for $L\setminus L_1$.

   Now consider only $L\setminus L_1$.  Observe that $\mathcal{H}_0'(-{L\setminus L_1}):=(S',\bm{\gamma'}\setminus\{\widetilde{\gamma}_1'\},\bm{\alpha'}\setminus\{\widetilde{\alpha}_1'\},\bm{z}\setminus\{z_1\},\bm{w}\setminus\{w_1\})$ is a Heegaard diagram adapted to $-{L\setminus L_1}$. Write $S'=\Sigma_0\cup-{\Sigma}_0$, with $\Sigma_0$ corresponding to the lower half of the Heegaard surface $S$ (in Figure~\ref{fig:fiberedsplitunknot1}), $-{\Sigma}_0$ corresponding to the upper part of the Heegaard surface $S'$ and $-{L\setminus L_1}$ isotopic to $\partial\Sigma_0=\partial(-{\Sigma}_0)$. Note that $\Sigma_0$ is a Seifert surface for $-L\setminus L_1$ in $-({Y_0}(\eta))$. Indeed, it is a minimum genus Seifert surface, since it is a page in the open book corresponding to the fibered knot $-{L\setminus L_1}$. We now proceed to find explicit generators for $\widehat{\HFL}(\mathcal{H}'_0(-{L}),[-\Sigma],1-G))$. We have two cases according to whether $\Sigma_0$ is a disk or it is not. 
   
   If $\Sigma_0$ is not a disk then it follows from~\cite[Theorem 1]{VelaVicktransverseinvariant} in the case that $L\setminus L_1$ has one component and from~\cite[Theorem 1.1]{tovstopyat2018transverse} in general that the BRAID invariant of an appropriate transverse approximation of the binding of an open book is non-trivial --- i.e. that  $\{c_2',c_3'\dots c_{N}', \widetilde{c}_3', \widetilde{c}_4'\dots \widetilde{c}_{n}'\}$ --- is non-trivial in $\widehat{\HFL}(\mathcal{H}_0'(-{L\setminus L_1}))$. Indeed, it follows from~\cite[Theorem 6.2]{cavallo2017invariant} that the BRAID invariant of such an approximation has $ A_{[\Sigma_0]}$ grading given by $\dfrac{\mathit{sl}(L\setminus L_1)+(n-1)}{2}$, where $n$ is the number of components of $L$ and $\mathit{sl}(L\setminus L_1)$ is the self-linking number of the transverse approximation of $L\setminus L_1$. The self-linking number of a transverse approximation of the binding of an open book is given by $\mathit{sl}(L\setminus L_1)=-\chi(L\setminus L_1)=2g(\Sigma_0)+|L\setminus L_1|-2$, since $\Sigma_0$ is a page of the open book where $\chi(L\setminus L_1)$ is the maximal Euler characteristic of a surface bounded by $L\setminus L_1$.  It follows that the Alexander grading of the BRAID invariant of a transverse approximation of $L\setminus L_1$, viewed as an element of $\widehat{\CFL}(\mathcal{H}_0'(-{L\setminus L_1}))$, has  $ A_{[-\Sigma_0]}$-grading given by
   \begin{align*}\dfrac{1-n-\mathit{sl}(L\setminus L_1)}{2}&=\dfrac{1-n+\chi(\Sigma_0)}{2}\\&=\dfrac{4-2n-2g(\Sigma_0)}{2}\\&=2-n-g(\Sigma)\\&=1-G.\end{align*}

   Since $-{L}$ contains $-{L_{1}}$ as a split unknotted component, one can obtain $-{L}$ by performing a connected sum of $-{L\setminus L_{1}}$ with a two-component unlink. Using the chain level K\"unneth formula~\cite[Theorem 11.1]{HolomorphicdiskslinkinvariantsandthemultivariableAlexanderpolynomial}, we can compute the link Floer homology of the connect sum of $-{L\setminus L_1}$ with a two component unlink $U_1\sqcup U_2$ bounding two disks $D_1\sqcup D_2$. Assume, without loss of generality, that the connect sum operation is between the components $-{L_2}\subset -{L\setminus L_1}$ and $U_2$. Note that $[\Sigma]=[\Sigma_0\cup D_1]\in H_1(-({Y_0}(\eta)),-{L})$, where we view $\Sigma$ as a surface in $-({Y_0}(\eta))$. We have that 
   \begin{align*}\widehat{\CFL}(\mathcal{H}'_0(-{L}),{-[\Sigma]},1-G)&\cong\widehat{\CFL}(\mathcal{H}'_0(-{L}),{-[\Sigma_0\cup D_1]},1-G)\\&\cong \widehat{\CFL}(\mathcal{H}'_0(-{L\setminus L_{1}}),{-[\Sigma_0]},1-G)\otimes \widehat{\CFL}(U_1\sqcup U_2,{-[D_1\sqcup D_2]},0)\\&\cong\widehat{\CFL}(\mathcal{H}'_0(-{L\setminus L_{1}}),{-[\Sigma_0]},1-G)\otimes V\end{align*} 
 where $V$ is a rank two vector space supported in Alexander grading zero, and $\widehat{\CFL}(\mathcal{H}'_0(-{L\setminus L_{1}}),{-[\Sigma_0]},1-G)$ is of rank one, since $L\setminus L_{1}$ is fibered and $\widehat{\CFL}(\mathcal{H}'_0(-{L\setminus L_{1}}),{-[\Sigma_0]})$ has minimal grading $1-G$. Indeed, from the proof of~\cite[Theorem 11.1]{HolomorphicdiskslinkinvariantsandthemultivariableAlexanderpolynomial}, we see that $\widehat{\HFL}(\mathcal{H}'_0(-{L}),[-\Sigma],1-G))$ is generated by $\{v_\pm',c_2',c_3',\dots c_{N}',\widetilde{c}_3'\dots \widetilde{c}_{n}'\}$. Here $v_\pm'$ are the two intersection points between $\widetilde{\gamma}_1'$ and $ \widetilde{\alpha}_1'$ as shown in Figure~\ref{fig:fiberedsplitunknot1}. In the case that $\Sigma_0$ is a disk, we still have that $\widehat{\HFL}(\mathcal{H}'_0(-{L}),[-\Sigma],1-G))$ is generated by $\{v_\pm'\}$, which can be seen by inspection.

   Having found our explicit generators for $\widehat{\HFL}(\mathcal{H}'_0(-{L}),[-\Sigma],1-G))$, we proceed on with the proof. To do so, we now go in reverse from ${\mathcal{H}_{0}'(-{L})}$ to ${\mathcal{H}_{0}(-{L})}$. Stabilize ${\mathcal{H}_0'(-{L})}$ near the basepoint $w_2$ on the second boundary component of $-{\Sigma}$ by adding new curves $\alpha_1'$ and $\gamma_1'$ as shown in Figure~\ref{fig:stabilizedfiberedsplitunknot} to obtain a Heegaard diagram ${\mathcal{H}''_0(-{L})=(S,\bm{\gamma'}\cup \{\gamma_1'\},\bm{\alpha}'\cup\{\alpha_1'\},\bm{z},\bm{w})}$ . The generators of $\widehat{\HFL}(\mathcal{H}_0''(-{L}), {[\Sigma]},1-G)$ are $${\bm{v}_\pm':=\{x,v_\pm',c_2',c_3',\dots c_{N}',\widetilde{c}_3',\dots\widetilde{c}_{n}'\}},$$ where $x$ is the unique element of $\alpha_1'\cap\gamma_1'$ and $N:=2g(\Sigma)+|\partial\Sigma|-1$. Finally, perform isotopies and handleslides as indicated by the arrows in Figure~\ref{fig:handleslide1} to obtain the Heegaard diagram $\mathcal{H}_0(-{L})$ for $-{L}$. There is an isomorphism
$$f:\widehat{\HFL}(\mathcal{H}_0''(-{L}),{-[\Sigma]},1-G)\to \widehat{\HFL}(\mathcal{H}_0(-{L}),{-[\Sigma]},1-G)$$ 

\noindent given by a sequence of maps induced by a sequence of isotopies and handleslides; i.e. counts of pseudo-holomorphic triangles; see~\cite[Section 7.2, Section 9]{ozsvath2004holomorphic}. Only the gamma curves change during these isotopies and handleslides, so to simplify exposition for the remainder of this argument we denote the set of alpha curves in both Heegaard diagrams by a general $\bm{\alpha}$. Care must be taken to guarantee that the relevant Heegaard triple diagrams are weakly admissible, but this can be checked by observing that every $\alpha,\gamma$ and $\gamma'$-curve has a region with a basepoint in it on either side.

   \begin{figure}[h!]
    \centering

    \begin{subfigure}{0.45\linewidth}
        \centering
        \begin{tikzpicture}[scale=0.5]

\draw[black,dashed] (-3,-2.5) --(-3,4.5);
\draw[black,dashed] (3,-2.5) --(3,4.5);

\draw[green,thick] (0,1) circle(2);
\draw[red,thick] (0,0) circle(2);

\draw[green, thick] (-3,4) -- (3,4);
\draw[gray] (-3,0) -- (3,0);

\draw[red, thick] (2.5,-2.5) -- (2.5,4.5);

  \filldraw[black] (1,0) circle (2pt);
   \filldraw[black] (-1,0) circle (2pt);


      \draw[->, cyan, thick] (0.1,-1) -- (0.1,4);

       \draw[->, cyan, thick] (-0.1,3) -- (-0.1,4);


         \node at (1,4.5) {\( \gamma_1'\)};

           \node at (-2,2.5) {\( \widetilde{\gamma}_1'\)};
          
           \node at (-2,-2) {\( \widetilde{\alpha}_1'\)};

            \node at (2.5,5) {\( \alpha_1'\)};

\end{tikzpicture}
        \caption{}\label{fig:handleslide1}
    \end{subfigure}
    \hfill
    \begin{subfigure}{0.45\linewidth}
        \centering
        \begin{tikzpicture}[scale=0.5]

\draw[black,dashed] (-3,-2.5) --(-3,4.5);
\draw[black,dashed] (3,-2.5) --(3,4.5);

\draw[red,thick] (0,0) circle(2);

\draw[red, thick] (2.5,-2.5) -- (2.5,4.5);

\draw[green, thick] (-3,4) -- (3,4);

\draw[gray] (-3,0) -- (3,0);

\draw[green, thick] (-3,-0.5) -- (-1,-0.5) arc[start angle=-90, end angle=90, radius=0.5] --(-3,0.5);

\draw[green, thick] (3,-0.5) -- (1,-0.5) arc[start angle=270, end angle=90, radius=0.5] --(3,0.5);

  \filldraw[black] (1,0) circle (2pt);
   \filldraw[black] (-1,0) circle (2pt);
 

\end{tikzpicture}
        \caption{}\label{subfig:postslide}
    \end{subfigure}

      \caption{Neighborhoods of $\partial_1(\Sigma)$ --- shown in gray --- in $\mathcal{H}_0''(-L)$ (Figure~\ref{fig:handleslide1}) and in $\mathcal{H}_0(-L)$ (Figure~\ref{subfig:postslide}). Note that the left and right dashed edges of each subfigure are identified. The blue arrows indicate two handleslides we use to turn $\widetilde{\gamma}_1'$ into the necklace curve $\widetilde{\gamma}_1$. The handleslide corresponding to the short arrow is performed first.}  \label{fig:choker1}
\end{figure}

Consider a basis element $\bm{y}\in\T_{\bm{\alpha}}\cap\T_{\bm{\gamma}}$ with $\langle f(\bm{v}_\pm'),\bm{y}\rangle\neq 0$. To parse the notation $\langle-,-\rangle$, recall that if $\bm{a}$ is an element of $\T_{\bm{\alpha}}\cap\T_{\bm{\gamma}}$, then $f(\bm{a})$ is a linear combination of elements $\bm{y}\in\T_{\bm{\alpha}}\cap\T_{\bm{\gamma}}$ by definition. With this in mind, $\langle f(\bm{v}_\pm'),\bm{y}\rangle$ denotes the coefficient of $\bm{y}$ in $f(\bm{v}_\pm')$.
The convenient positioning of the basepoints near each pair of intersection points $c_i'$ for $i\geq 2$ or $\widetilde{c}_i'$ for $i\neq 1$ --- as in Figure~\ref{fig:standardtraingles}  --- implies that $c_2',c_3',\dots c_{N}',\widetilde{c}_3',\dots \widetilde{c}_{n}'\in\bm{y}$ --- see Figure~\ref{fig:standardtraingles}. In slightly more detail, since $f$ is the composition of maps induced by an operation --- either a handleslide or an isotopy --- we can write it as $f=f_1^*\circ f_2^*\circ\dots\circ f_l^*$ for some $l$. Each map $f_i^*$ is induced by a count of pseudo-holomorphic triangles in a Heegaard triple with underlying Heegaard diagram containing $\bm{\alpha}$, the pre-operation $\gamma$-curves and the post-operation $\gamma$-curves. Since the two types of $\gamma$-curves are almost identical in a neighborhood of the points playing the role of the $c_j'$ or $\widetilde{c}_j'$ intersection points --- as shown in Figure~\ref{fig:standardtraingles} --- the argument follows.
Moreover, the fact that $x$ is the unique element of $\gamma_1\cap\bm{\alpha}$ implies that $x\in\bm{y}$. Here we are abusing notation by allowing $x$ to also denote the unique element in $\gamma_1\cap\bm{\alpha}$; see Figure~\ref{fig:uniquetriangle}.

 \begin{figure}[ht]
\centering

       \centering
\begin{tikzpicture}

\fill[orange!30] (1,0) -- (3,0) -- (3,-2) -- (1,0);

\draw[dashed] (0,-3) rectangle (4,1);

\draw[blue, thick] (0,0) -- (4,0);

\draw[red, thick] (3,1) -- (3,-3);

\draw[green, thick] (0,1) -- (4,-3);

     \filldraw[black] (1,0) circle (2pt);
    \node at (1,-0.3) {\( \theta_i \)};

    \filldraw[black] (3,0) circle (2pt);
    \node at (3.2,-0.2) {\( c_i' \)};

      \filldraw[black] (3,-2) circle (2pt);
    \node at (2.8,-2) {\( c_i' \)};


\node at (0.25,0.25) {\( A\)};

\node at (2,0.25) {\( B\)};

\node at (3.25,0.25) {\( 0\)};

\node at (0.5,-1) {\( 0\)};

\node at (2.5,-1) {\( C\)};

\node at (3.25,-1) {\( D\)};

\node at (3.25,-2.7) {\( E\)};

\end{tikzpicture}

    \caption{If we have domains of an admissible Heegaard diagram as shown here and $c_i'\in\bm{c}'$, $\theta_i\in\bm{\theta}$ then the shadow of any pseudo-holomorphic disk contributing to $f_{\bm{\theta}}(\bm{c}')$ has multiplicity $1$ in domain $C$ and $0$ in $A$, $B$ and $D$. Here $\bm{\theta}$ is a canonical intersection point in $\T_{\bm{\gamma}}\cap\T_{\bm{\gamma}'}$, see~\cite[Section 9]{ozsvath2004holomorphic} for details.
    }\label{fig:standardtraingles}
   \end{figure}

It remains to determine the intersection points on $\widetilde{\gamma}_1\cap \widetilde{\alpha}_1$ in $\bm{y}$. There are four such intersection points, two of which are ${v}_\pm$. Note that $\bm{v}_\pm$ are of the correct Alexander grading since there are pseudo-holomorphic triangles contributing to $f$ from $\bm{v}_\pm'\to\bm{v}_\pm$. The remaining two intersection points yield generators of the incorrect Alexander grading since they are connected to $\bm{v}_\pm$ by pseudo-holomorphic disks containing either $z_1$ or $w_1$, say by~\cite[Lemma 3.10]{HolomorphicdiskslinkinvariantsandthemultivariableAlexanderpolynomial}.  Note, moreover, that the only $\Z/2$-linear combinations of $\{v_\pm,x,c_2',c_3',\dots c_{N}',\widetilde{c}_3',\dots \widetilde{c}_{n}'\}$ are $\bm{v}_\pm$ and $\bm{v}_++\bm{v}_-$. It then follows that $\bm{v}_\pm$ are generators, as desired. 
\end{proof}

\subsection{Analysing the exact triangle}\label{subsec:exacttriangle}
 We can now proceed to the proof of Proposition~\ref{thm:HFKaddtwists}. We want to define a Heegaard triple diagram $\mathcal{H}_{\bm{\alpha},\bm{\beta},\bm{\gamma}}$ for computing the map $f_1^*$ from the exact triangle in Equation~(\ref{eq:exactneg}). Recall from the previous subsection that we already have Heegaard diagrams $\mathcal{H}$ for $(-{Y},-{L_\phi})$, and $\mathcal{H}_0(-{L})$ for $(-{Y}_0(\eta),-{L})$ with homeomorphic Heegaard surfaces, which we are denoting $S$. Taking $S$ as our Heegaard surface for $\mathcal{H}_{\bm{\alpha},\bm{\beta},\bm{\gamma}}$, we can let $\bm{\alpha}$ denote the $\alpha$-curves for both $(-Y_0(\eta),-L)$ and $(-Y_{\phi},-L)$. Likewise, $\beta$-curves will be curves in the Heegaard diagram $\mathcal{H}:(S,\bm{\beta},\bm{\alpha},\bm{w},\bm{z})$ for $(-Y_{\phi},-L)$, where $(S,\bm{\alpha},\bm{\beta},\bm{z},\bm{w})$ is adapted to $(\Sigma,\phi)$ and $\gamma$ will be the (non-alpha) curves in the Heegaard diagram $\mathcal{H}_0(-{L})$ for $(-{Y}_0(\eta),-L)$,  as in the preceding subsection. Let $c_i$ be the intersection point in $\alpha_i\cap\beta_i$ and $\widetilde{c}_j$ be the intersection point in $\widetilde{\alpha}_j\cap\widetilde{\beta}_j$ contributing to the BRAID invariant of $L$. Let $u_\pm$ indicate the two points labeled as such in Figure~\ref{fig:uniquetriangle}. Let $x$ denote the unique intersection point on $\alpha_1\cap\gamma_1$ from the Heegaard diagram $\mathcal{H}_0(-{L})$.

If $\phi$ sends an arc $a_1$ with exactly one intersection point with $\partial_1\Sigma$ weakly to the left then this will be witnessed by an intersection point $d_1\in\alpha_1\cap\beta_1\cap-{\Sigma}$. Note that that this is true even if $a_1$ is fixed by $\phi$ since when constructing our Heegaard diagram $\beta_1$ is obtained from an appropriate perturbation of $\phi(a_1)$. See Figure~\ref{fig:uniquetriangle}, where the arc appears to be going to the right since we are working in $-{\Sigma}$ rather than $\Sigma$. Consequently we have generators $\bm{u}_\pm:=\{u_\pm,d_1,c_2,\dots c_{N},\widetilde{c}_3,\dots\widetilde{c}_{n}\}$ of $\mathcal{H}(-L)$. The generators $\bm{v}_{\pm}=\{v_\pm,x,c_2',\dots c_{N}',\widetilde{c}_3',\dots\widetilde{c}_{n}'\}$ are the same as they were in the previous section; see Figure~\ref{fig:cleanedsurgery}. 

    \begin{figure}[ht]
\centering
\begin{tikzpicture}

\fill[orange!30]
  (0.5,9) arc[start angle=180, end angle=90, radius=0.5] -- 
  (5,9.5) -- 
  (5,9) -- 
  (0.5,9) -- 
  cycle;

\fill[orange!30]
 (7,2) --
  (7,3) --  (9,3) --(9,2.5) arc[start angle=0, end angle=-90, radius=0.5] -- 
  (8.5,2) -- 
  (7,2) -- 
  cycle;

\draw[dashed] (0,0) rectangle (10,10);

\draw[red, thick] (5,0) -- (5,10);

\draw[green, thick] (0,9) -- (10,9);


 \draw[blue, thick] (6.5,0) arc[start angle=0, end angle=90, radius=0.5];
 \draw[blue, thick] (6,0.5) -- (1,0.5);
  \draw[blue, thick] (1,0.5) arc[start angle=270, end angle=180, radius=0.5];
 \draw[blue, thick] (0.5,1) -- (0.5,9);
   \draw[blue, thick] (0.5,9) arc[start angle=180, end angle=90, radius=0.5];
 
  \draw[blue, thick] (1,0.5) arc[start angle=270, end angle=180, radius=0.5];
  \draw[blue, thick] (6,9.5) --  (1,9.5);
 \draw[blue, thick] (6,9.5) arc[start angle=270, end angle=360, radius=0.5];


\draw[red, thick] (0,5) -- (3,5);
 \draw[red, thick] (3,5) arc[start angle=90, end angle=-90, radius=1];
\draw[red, thick] (0,3) -- (3,3);


\draw[red, thick] (10,5) -- (7,5);
 \draw[red, thick] (7,5) arc[start angle=90, end angle=270, radius=1];
\draw[red, thick] (10,3) -- (7,3);


\draw[green, thick]  (1.5,2) -- (8.5,2);

\draw[green, thick]  (8.5,2) arc[start angle=270, end angle=360, radius=0.5];

\draw[green, thick]  (9,2.5) -- (9,7.5);

\draw[green, thick]  (9,7.5) arc[start angle=0, end angle=90, radius=0.5];
\draw[green, thick]  (1.5,8) -- (8.5,8);

\draw[green, thick]  (1.5,8) arc[start angle=90, end angle=180, radius=0.5];

\draw[green, thick]  (1,7.5) -- (1,2.5);

\draw[green, thick]  (1,2.5) arc[start angle=180, end angle=270, radius=0.5];


\draw[blue, thick]  (3,2) arc[start angle=180, end angle=270, radius=1];
\draw[blue, thick]  (4,1) -- (6,1);
\draw[blue, thick]  (6,1) arc[start angle=270, end angle=360, radius=1];

\draw[blue, thick]  (7,6) -- (7,2);

\draw[blue, thick]  (7,6) arc[start angle=0, end angle=90, radius=1];
\draw[blue, thick]  (4,7) -- (6,7);

\draw[blue, thick]  (4,7) arc[start angle=90, end angle=180, radius=1];
\draw[blue, thick]  (3,6) -- (3,2);

  \filldraw[black] (5,0.5) circle (2pt);
    \node at (4.8,0.3) {\( {c}_1 \)};

      \filldraw[black] (5,9.5) circle (2pt);
    \node at (5.2,9.3) {\( {d}_1 \)};

          \filldraw[black] (5,9) circle (2pt);
    \node at (5.2,8.7) {\( x \)};

          \filldraw[black] (0.5,9) circle (2pt);
    \node at (0.7,8.7) {\( \theta_1 \)};

          \filldraw[black] (7,2) circle (2pt);
    \node at (7.2,1.7) {\( \widetilde{\theta}_1 \)};

      \filldraw[black] (9,3) circle (2pt);
    \node at (9.3,2.8) {\( v_+ \)};
      \filldraw[black] (1,3) circle (2pt);
    \node at (1.3,2.8) {\( v_- \)};

      \filldraw[black] (7,3) circle (2pt);
    \node at (7.3,2.8) {\( u_+ \)};
      \filldraw[black] (3,3) circle (2pt);
    \node at (3.3,2.8) {\( u_- \)};


       \node at (4.5,9.8) {\( A\)};

 \node at (5.5,9.8) {\( B\)};

    \node at (7.5,9.8) {\( A\)};

   \node at (0.2,8.5) {\( C\)};
   \node at (3.5,9.2) {\( E\)};
   \node at (7.5,8.5) {\( C\)};
    
    \node at (2.5,8.5) {\( D\)};

       \node at (4.5,7.5) {\( L\)};

 \node at (5.5,7.5) {\( M\)};

     \node at (0.2,4) {\( H\)};
     \node at (2.5,4) {\( G\)};
    \node at (3.5,4) {\( w_1\)};

   \node at (6.5,4) {\( z_1\)};

 \node at (4.5,4) {\( I\)};

 \node at (5.5,4) {\( J\)};
     \node at (7.5,4) {\( N\)};
      \node at (9.5,4) {\( H\)};

   \node at (2.5,2.2) {\( F\)};
    \node at (7.5,2.2) {\( P\)};

         \node at (2.5,1.2) {\( K\)};

          \node at (4.5,1.2) {\( Q\)};

 \node at (5.5,1.2) {\( R\)};

  \node at (0.4,0.4) {\( 0\)};
  \node at (9.4,0.4) {\( 0\)};


    \node at (9,9.1) {\( \gamma_1\)};
        \node at (9,8.1) {\(\widetilde{ \gamma}_1\)};

         \node at (2.7,6.6) {\(\widetilde{ \beta}_1\)};
            \node at (2,5.4) {\(\widetilde{ \alpha}_1\)};

              \node at (4.5,5.4) {\({ \alpha}_1\)};
              
              \node at (6.6,0.5) {\({ \beta}_1\)};

\end{tikzpicture}
          \caption{A neighborhood of $\partial_1\Sigma$. The orange domains show the shadow of a pseudoholomorphic disk contributing to the map $f_1^*$ in the proof of Proposition~\ref{thm:HFKaddtwists}. The lower left and lower right domains have multiplicity zero because they contain the basepoint $z_2$. }\label{fig:uniquetriangle}
   \end{figure}

   The proof of Proposition~\ref{thm:HFKaddtwists1} is substantially easier than that of Proposition~\ref{thm:HFKaddtwists}, so we give its proof first. For this we use the following Lemma.

\begin{lemma}\label{lem:f1nontrivial}
    Suppose $\phi$ sends an arc with exactly one endpoint abutting to $\partial_1\Sigma$ weakly to the left. Then $f_1^*([\bm{u}_+]) =[\bm{v}_+]$ so that, in particular, $f_1^*$ is non-trivial.
\end{lemma}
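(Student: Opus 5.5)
We compute $f_1^*$ on the chain level with the triple diagram $\mathcal{H}_{\bm{\alpha},\bm{\beta},\bm{\gamma}}$ and the canonical top-graded cycle $\bm{\theta}=\{\theta_1,\widetilde{\theta}_1,\theta_2,\dots,\theta_N,\widetilde{\theta}_3,\dots,\widetilde{\theta}_n\}\in\T_{\bm{\beta}}\cap\T_{\bm{\gamma}}$. Away from a neighborhood of $\partial_1\Sigma$, the $\beta$- and $\gamma$-curves are small Hamiltonian translates of one another. The plan is to show that $\langle f_{\bm{\theta}}(\bm{u}_+),\bm{y}\rangle$ is nonzero only for $\bm{y}=\bm{v}_+$ among generators in Alexander grading $1-G$, and that the coefficient of $\bm{v}_+$ is $1$. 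The first step is to check that $\bm{u}_+$ is a cycle representing a class. In the worst case we only need a chain-level statement, followed by the observation that $f_1^*$ maps into a group where $\bm{v}_+$ is a nonzero class, by Lemma~\ref{lem:0surgery}. We can also argue that $\partial\bm{u}_+$ involves only domains crossing basepoints, using the same local-basepoint argument as for the BRAID invariant together with $d_1$ sitting next to $z_1,w_1$ and $A,B$.

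The second step is to localize. For each $i\geq 2$, and for each necklace pair $j\geq 3$, the local picture near $c_i,\theta_i$ is exactly Figure~\ref{fig:standardtraingles}. Admissibility, together with the basepoints adjacent to these corners, forces every contributing triangle to use the small triangle $C$ there and to output $c_i'$ (respectively $\widetilde{c}_j'$). As in the proof of Lemma~\ref{lem:0surgery}, these corners therefore contribute a factor of $1$. Since $\gamma_1\cap\bm{\alpha}=\{x\}$, every output also contains $x$.

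The remaining content lies in the neighborhood of $\partial_1\Sigma$ shown in Figure~\ref{fig:uniquetriangle}. There we must show that the only domain with Maslov index $0$, nonnegative multiplicities, zero multiplicity at all basepoints ($z_1,w_1,z_2$ and the corner regions marked $0$), and corners at $u_+,d_1$ (inputs), $\theta_1,\widetilde{\theta}_1$ (the $\bm{\theta}$ corners) and $x,v_\pm$ (outputs) is the union of the two orange regions. The piece $E$ is a triangle $d_1\to x$ with corner $\theta_1$. The piece near $P$ is a triangle $u_+\to v_+$ with corner $\widetilde{\theta}_1$. Each is an embedded triangle, so it has a unique holomorphic representative mod $2$, which gives coefficient $1$ on $\bm{v}_+$.

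To rule out other domains, we write the multiplicities of the regions $A,\dots,R$ as unknowns and impose the corner (boundary) conditions. The basepoint regions force the multiplicities of $G,I,J,N$ and the regions adjacent to them to vanish, so any output $v_-$ would require a domain that crosses $w_1$ or $z_1$. Because $\beta_1$ meets $\alpha_1$ in $-\Sigma$ only at points corresponding to the leftward turn, and $d_1$ is the outermost such point, no larger region starting at $d_1$ avoids $B$, $A$ or the basepoints. This case analysis of local multiplicities is the main obstacle. The delicate part is that $\beta_1$ may wind globally through $\Sigma$, so a contributing domain could a priori leave the pictured neighborhood. We handle this by noting that any such domain must cross $\partial_1\Sigma$ near $w_1$ or $z_1$, or enter a region containing $z_2$. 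Once the domain is pinned down, we conclude that $f_{\bm{\theta}}(\bm{u}_+)=\bm{v}_+$ plus terms in other Alexander gradings, or plus boundaries. Hence $f_1^*([\bm{u}_+])=[\bm{v}_+]\neq 0$ by Lemma~\ref{lem:0surgery}.
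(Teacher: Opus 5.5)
Your proposal follows essentially the same route as the paper. You verify that $\bm{u}_+$ is a cycle, use the basepoint configuration of Figure~\ref{fig:standardtraingles} to freeze every corner away from $\partial_1\Sigma$, use $\gamma_1\cap\bm{\alpha}=\{x\}$, and identify the remaining domain as $E\cup P$, counted once by the Riemann mapping theorem. The paper also applies the local argument at $u_+$, which forces the small triangle $P$ and the output $v_+$ directly, in place of your separate exclusion of $v_-$. The one slip is your ``worst case'' fallback: without $\partial\bm{u}_+=0$ the class $[\bm{u}_+]$ is undefined, so a chain-level identity says nothing about $f_1^*$ and the cycle check is genuinely required. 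Its justification also should not rest on $d_1$ being near $z_1,w_1$, because $d_1$ lies in $-\Sigma$ on the far side of $\gamma_1$ from $\partial_1\Sigma$.
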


\begin{proof}

Consider the Heegaard diagrams $\mathcal{H}=(S,\bm{\beta},\bm{\alpha},\bm{z},\bm{w})$ for $(-{Y},-{L})$ and $\mathcal{H}_0(-{L})=(S,\bm{\gamma},\bm{\alpha},\bm{z},\bm{w})$ for $(-{Y}_{0}(\eta),-{L})$. Recall that $f_1^*$ is defined by a count of appropriate pseudo-holomorphic triangles in $\Sym^g(S)$ with boundary contained in the Lagrangian submanifolds $\T_\alpha\cup\T_\beta\cup\T_\beta$, where $\alpha$, $\beta$ and $\gamma$ are viewed as (appropriate perturbations) of curves  $S$. In particular, the shadows of these triangles in $S$ are required to have vertices on $\theta_i$ for $i\geq 1$, $\widetilde{\theta_j}$ for $j\geq 3$, and  $\widetilde{\theta_1}$ --- the unique intersection point between $\widetilde{\alpha_1}$ and $\widetilde{\gamma}_1$.  We first need to check that $\mathcal{H}_{\bm{\gamma},\bm{\beta},\bm{\alpha}}$ is weakly admissible. This can be seen as follows. First note that each $\alpha$-curve has a $z$ basepoint on both sides. Ignoring the $\alpha$-curves each $\beta$-curve has a $z$ basepoint on either side. Finally then each $\gamma$ curve has a basepoint on either side.

Returning to the main proof, we observe that $\bm{u}_+$ is a cycle in $\widehat{\CFL}(\mathcal{H})$. To see this, first observe that any outgoing pseudoholomorphic disk from $\bm{u}_+$ must be constant on every intersection point $c_j$ and $\widetilde{c}_k$ for all $j$ and $k$, since two of the adjacent elementary domains have multiplicity zero. This reduces computing $\partial{\bm{u}_+}$ to counting bigons formed by $\widetilde{\beta}_1$ and $\widetilde{\alpha}_1$ with a vertex at $u_+$ but we can see that there are no such bigons by inspection of Figure~\ref{fig:uniquetriangle}, where we may ignore the $\gamma$-curves.

We now claim that $f_1^*(\bm{u}_+)=\bm{v}_+$. Consider the local behaviour of the shadow of pseudoholomorphic disk $\psi$ contributing to $f_1$ near $p\in\bm{u}_+$ with $p\neq d_1$. Due to the positions of the basepoints near $p$, we have a local picture as in Figure~\ref{fig:standardtraingles}. The positioning of the basepoints implies that the multiplicity in region $C$ is one, while the multiplicities in region $A$, $B$ $D$ and $E$ are $0$ in Figure~\ref{fig:standardtraingles}. This is the same argument as used in the proof of~\cite[Proposition 3.7]{honda2006contact}. This determines the behavior of $\partial\psi$ on all $\alpha, \beta$ and $\gamma$-curves other than $\alpha_1,\beta_1,\gamma_1$. We now check that the shadow of $\psi$ in $S$, in a neighborhood of $\partial_1(\Sigma)$, must be the regions $P$ and $E$ shown in Figure~\ref{fig:uniquetriangle}. Indeed, $P$ must be in the shadow of $\psi$ by the argument above. Consequently, the multiplicity of every region except for $A$, $B$, $E$ and $P$ in Figure~\ref{fig:uniquetriangle} must be zero. Consider the remaining component of the shadow of $\psi$. It must have a corner on $x$ since $x$ is the unique intersection point of $\gamma_1$ and any $\alpha$ curve. Note that $\partial \psi$ must go vertically downwards from $d_1$ to $x$ and immediately stop at $x$ since otherwise it would pass through regions with multiplicity zero on both sides further down in the diagram. Observe that there is an arc connecting $A$ and a region with multiplicity zero that doesn't intersect any boundary components of $\psi$ by leaving the top of Figure~\ref{fig:uniquetriangle} and coming back around the bottom. So the multiplicity of $A$ is zero, and thus so is $B$. Thus we conclude that the shadow of $\psi$ contains $E$. This determines the homotopy class of $\psi$. This homotopy class has a unique holomorphic representative by the Riemann mapping theorem, concluding the proof.\end{proof}
We can now prove the remaining proposition advertised at the start of this section.

\begin{proof}[Proof of Proposition~\ref{thm:HFKaddtwists1}]
      Since $f_1^*$ is non-trivial by Lemma~\ref{lem:f1nontrivial} and ${\rank(\widehat{\HFL}(-{Y}_0,-{L},{-[\Sigma]},1-G))=2}$ by Lemma~\ref{lem:0surgery}, it follows from the exact triangle~(\ref{eq:exactneg}) 
 that $\rank(\widehat{\HFL}(-{Y},-{L_\phi}, {-[\Sigma]},1-G))$ is either $\rank(\widehat{\HFL}((-{Y})_1,-{L_{\Delta_1\circ\phi}}, {-[\Sigma]},-G))$
or 
$\rank(\widehat{\HFL}((-{Y})_1,-{L_{\Delta_1\circ\phi}}, {-[\Sigma]},-G))+2$. The result follows by applications of symmetry properties of $\widehat{\HFL}$; see Equation~\ref{eq:invariance}.
\end{proof}

 Before proceeding to the proof of Proposition~\ref{thm:HFKaddtwists}, we require the following Lemma.

   \begin{lemma}
       Suppose that $L$ is a fibered link in $Y$ with monodromy $\phi$, where $\phi\circ\Delta_1^2$ is weakly left-veering on $\partial_1\Sigma$. There is a minimal position Heegaard diagram for $\overline{a}_1$ and $\overline{b}_1$ adapted to $L$ such that $\overline{a}_1$ and $\overline{b}_1$ intersect at least three times in an annular neighborhood of $\partial \Sigma_1$.
   \end{lemma}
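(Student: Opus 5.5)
The plan is to reduce the statement to the Kazez--Roberts intersection bound recalled in Section~\ref{sec:FDTC}, and then to check that the perturbation used to build $\mathcal{H}(L)$ in Definition~\ref{def:HDadaptredtoL} preserves these intersections.

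First, extract twisting from the hypothesis. Since $\phi\circ\Delta_1^2$ sends the arc $a_1$ (the one arc meeting $\partial_1\Sigma$) weakly to the left at $\partial_1\Sigma$, the definition of $\lfloor\cdot\rfloor$ in Section~\ref{sec:FDTC} gives $\phi$ twisting about $\partial_1\Sigma$ by at least two full left twists relative to $a_1$. We aim for the following statement: in an annular collar $A$ of $\partial_1\Sigma$, the arc $\phi(a_1)$ winds at least twice around the core (in the negative direction) before leaving $A$, relative to $a_1$. Concretely, we isotope $\phi$ rel boundary so that on $A$ it agrees with $\Delta_1^{-2}$ composed with a map sending $a_1\cap A$ weakly left. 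If a cleaner route is needed, we pass to inverses: then $\FDTC(\phi^{-1},\partial_1\Sigma)\geq 2$ whenever the hypothesis forces $\FDTC\leq -2$. In that case Kazez--Roberts \cite[Proposition 2.9]{KazezRobertsFDTC}, in the form quoted in Section~\ref{sec:FDTC}, gives at least two interior intersections of $a_1$ and $\phi(a_1)$ in $A$.

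We do not, however, want to rely on $\FDTC$ directly, because the hypothesis is only weak left-veering of $\phi\circ\Delta_1^2$, so $\FDTC$ could equal $-2$ exactly. Instead we argue in the universal cover, or equivalently by lifting to the annular cover associated to $\partial_1\Sigma$. Write $\phi(a_1)=\Delta_1^{-2}(\psi(a_1))$, where $\psi(a_1)$ is weakly left of $a_1$. Applying $\Delta_1^{-2}$ makes the image cross the radial arc $a_1\cap A$ twice in the interior. Because $\psi(a_1)$ already lies weakly to the left, these crossings cannot be cancelled. We check this with the bigon criterion: any bigon between $a_1$ and $\phi(a_1)$ would lift to the annular cover and contradict the ordering of endpoints on the circle at infinity.

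Next, pass to the Heegaard diagram. The $\beta$-curve is $a_1\cup\overline b_1$, perturbed by pushing its endpoints along $\partial\Sigma$ in the direction of the orientation. In $-\Sigma$ this perturbation adds one further intersection between $\overline a_1$ and $\overline b_1$ near $\partial_1\Sigma$; this extra point is exactly the point $d_1$ discussed before Lemma~\ref{lem:f1nontrivial}. The finger move over $p_1$ does not remove any intersections with $\overline a_1$. This yields two interior intersections plus the perturbation intersection, for at least three in the annular neighborhood.

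Minimal position is then achieved by removing bigons away from $A$ only, using the bigon criterion. The intersections in $A$ survive because they are essential, as shown in the lift.

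The main obstacle is the essentiality claim. We must show that the three intersections in the collar cannot be removed by an isotopy of $\overline b_1$, particularly in the boundary case where $\psi$ fixes $a_1$. We would handle this by making the winding explicit: in a collar parametrized as $S^1\times[0,1]$, choose $\overline b_1$ to be a spiral of total angle at least $4\pi$ plus the perturbation, and verify minimality by the lifted-endpoint count.
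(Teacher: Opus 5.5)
Your proposal is correct and is essentially the paper's argument. Writing $\phi=\Delta_1^{-2}\circ(\Delta_1^{2}\circ\phi)$, one intersection near $\partial_1\Sigma$ (the point $d_1$) is forced by the displaced endpoint of $\overline{b}_1$ together with the weak left-veering of $\Delta_1^{2}\circ\phi$, and two more come from the two extra left-handed boundary twists, which cannot be cancelled because they spiral in the veering direction (the paper just asserts minimality, whereas you sketch a lift/bigon-criterion check; the FDTC/Kazez--Roberts detour is unnecessary and can be dropped). One bookkeeping caveat: if $\Delta_1^{2}\circ\phi$ fixes $a_1$, the unperturbed arcs $a_1$ and $\phi(a_1)=\Delta_1^{-2}(a_1)$ share both endpoints and meet only once in their interiors, so instead of counting two twist crossings before perturbing, you should twist the already-perturbed arc $\overline{b}_1$, which already meets $\overline{a}_1$ at $d_1$, as your last paragraph and the paper do.
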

Here by a minimal position Heegaard diagram for $\overline{a}_1$ and $\overline{b}_1$ adapted to $L$, we mean a Heegaard diagram as in Definition~\ref{def:HDadaptredtoL}, such that $\overline{a}_1$ and $\overline{b}_1$ do not cobound a bigon. See Figure~\ref{fig:} for an example of such a Heegaard diagram. Observe that $\overline{b}_1$ --- shown in blue --- appears to be sent to the right --- but it actually sent to the left, since the Figure shows $-\Sigma$.
   \begin{proof}
       Consider a minimal position diagram for the fibered link with monodromy $\Delta_1^2\circ\phi:\Sigma\to\Sigma$. Since $\overline{a}_1\cap\partial_1\Sigma$ is to the left of $\overline{b}_1\cap \partial_1\Sigma$, and $\overline{b}_1$ is weakly to the left of $\overline{a}_1$, there is at least one intersection point between $\overline{a}_1$ and $\overline{b}_1$ in some annular neighborhood of $\partial_1\Sigma$. Observe that one can obtain a minimal position diagram for $\phi$ by adding two left handed Dehn twists to $\beta_1$ parallel to $\partial_1\Sigma$. This adds the desired pair of intersection points.
   \end{proof}
   
   It follows from the Lemma that if $\Delta_1^2\circ \phi$ sends $a_1$ weakly to the left then there are second and third intersection points $d_2,d_3\in\alpha_1\cap\beta_1$ directly above $d_1$ in $\mathcal{H}$, as shown in Figure~\ref{fig:}. Set $\bm{u}^3_-:=\{u_-,d_3,c_2,\dots, c_N,\widetilde{c}_3,\widetilde{c}_4,\dots,\widetilde{c}_n\}$.

       \begin{figure}[ht]
\centering
   
\begin{tikzpicture}[scale=0.7]

\draw[dashed] (0,0) rectangle (10,12.5);

  \draw[red, thick] (5,0) -- (5,12.5);


 \draw[blue, thick] (6.5,0) arc[start angle=0, end angle=90, radius=0.5];
 \draw[blue, thick] (6,0.5) -- (1,0.5);
  \draw[blue, thick] (1,0.5) arc[start angle=270, end angle=180, radius=0.5];
 \draw[blue, thick] (0.5,1) -- (0.5,9);
   \draw[blue, thick] (0.5,9) arc[start angle=180, end angle=90, radius=0.5];
 
  \draw[blue, thick] (1,0.5) arc[start angle=270, end angle=180, radius=0.5];
  \draw[blue, thick] (6,9.5) -- (1,9.5);
 \draw[blue, thick] (6,9.5) arc[start angle=270, end angle=360, radius=0.5];
 \draw[blue, thick] (6.5,10) arc[start angle=180, end angle=90, radius=0.5];
  \draw[blue, thick] (7,10.5) -- (10,10.5);
    \draw[blue, thick] (0,10.5) -- (5.5,10.5);
     \draw[blue, thick] (5.5,10.5) arc[start angle=270, end angle=360, radius=0.5];
         \draw[blue, thick] (6,11) arc[start angle=180, end angle=90, radius=0.5];
           \draw[blue, thick] (6.5,11.5) -- (10,11.5);
             \draw[blue, thick] (0,11.5) -- (5,11.5);
                \draw[blue, thick] (5,11.5) arc[start angle=270, end angle=360, radius=0.5];

                 \draw[blue, thick] (5.5,12) -- (5.5,12.5);


\draw[red, thick] (0,5) -- (3,5);
 \draw[red, thick] (3,5) arc[start angle=90, end angle=-90, radius=1];
\draw[red, thick] (0,3) -- (3,3);


\draw[red, thick] (10,5) -- (7,5);
 \draw[red, thick] (7,5) arc[start angle=90, end angle=270, radius=1];
\draw[red, thick] (10,3) -- (7,3);


\draw[blue, thick]  (3,2) arc[start angle=180, end angle=270, radius=1];
\draw[blue, thick]  (4,1) -- (6,1);
\draw[blue, thick]  (6,1) arc[start angle=270, end angle=360, radius=1];

\draw[blue, thick]  (7,6) -- (7,2);

\draw[blue, thick]  (7,6) arc[start angle=0, end angle=90, radius=1];
\draw[blue, thick]  (4,7) -- (6,7);

\draw[blue, thick]  (4,7) arc[start angle=90, end angle=180, radius=1];
\draw[blue, thick]  (3,6) -- (3,2);

  \filldraw[black] (5,0.5) circle (2pt);
    \node at (4.8,0.3) {\( {c}_1 \)};

      \filldraw[black] (5,9.5) circle (2pt);
    \node at (5.3,9.2) {\( {d}_1 \)};
    
          \filldraw[black] (5,10.5) circle (2pt);
    \node at (5.3,10.2) {\( {d}_2 \)};

      \filldraw[black] (5,11.5) circle (2pt);
    \node at (5.3,11.2) {\( {d}_3 \)};

  \filldraw[black] (3,5) circle (2pt);
    \node at (2.7,5.3) {\(y_- \)};
    
     \filldraw[black] (7,5) circle (2pt);
    \node at (7.4,5.3) {\(y_+ \)};

      \filldraw[black] (0.5,3) circle (2pt);
    \node at (0.8,2.7) {\( x_2 \)};
     \filldraw[black] (0.5,5) circle (2pt);
    \node at (0.8,5.2) {\( x_1 \)};

      \filldraw[black] (5,1) circle (2pt);
    \node at (5.3,1.2) {\( y_2 \)};
     \filldraw[black] (5,7) circle (2pt);
    \node at (5.3,7.3) {\( y_1 \)};

      \filldraw[black] (3,3) circle (2pt);
    \node at (3.4,2.7) {\( u_- \)};

 \filldraw[black] (7,3) circle (2pt);
    \node at (7.4,2.7) {\( u_+\)};


       \node at (2,11.8) {\( A\)};

       \node at (2,10.8) {\( B\)};
         
         \node at (2,9.8) {\( C\)};

 \node at (8.5,10.8) {\( B\)};

 \node at (2,7) {\( D\)};
 
    \node at (8.5,7) {\( C\)};

     \node at (0.2,4) {\( E\)};
      \node at (2,4) {\( F\)};
   
    \node at (3.5,4) {\( w_1\)};

   \node at (6.5,4) {\( z_1\)};

 \node at (4.5,4) {\( G\)};

 \node at (5.5,4) {\( H\)};
     \node at (8.5,4) {\( E\)};

      \node at (2,2) {\( I\)};

  \node at (0.4,0.4) {\( 0\)};
  \node at (8.5,2) {\( 0\)};


    \node at (7,0.4) {\( \beta_1\)};

     \node at (7,7) {\( \widetilde{\beta}_1\)};

         \node at (5.5,8) {\( {\alpha}_1\)};

               \node at (9,5.5) {\( \widetilde{\alpha}_1\)};
   
\end{tikzpicture}
    \caption{A neighborhood of $\partial_1\Sigma$ in the Heegaard diagram adapted to $(\Sigma,\phi)$ when $\Delta_1^{2}\circ\phi$ sends the arc $a_1$ weakly to the left. Here $\Delta_1$ is a right-handed Dehn twist. The multiplicities of the bottom left and right domains are both zero because they contain the basepoint $w_2$.}
    \label{fig:}
   \end{figure}
\begin{lemma}\label{lem:cycle}
    Let $\phi:\Sigma\to\Sigma$ be a diffeomorphism of a surface with $|\partial\Sigma|>1$ such that $\phi\circ\Delta_1^2$ sends an arc with exactly one endpoint on $\partial_\Sigma$ weakly to the left at that endpoint. The generator $\bm{u}_-^3$ is a cycle.
\end{lemma}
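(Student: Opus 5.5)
We argue exactly as for $\bm{u}_+$ in the proof of Lemma~\ref{lem:f1nontrivial}. We must show that no Maslov index one pseudo-holomorphic disk in $\widehat{\CFL}(\mathcal{H})$, $\mathcal{H}=(S,\bm{\beta},\bm{\alpha},\bm{z},\bm{w})$, leaves $\bm{u}_-^3$ with zero multiplicity at all basepoints. The first step is to localize. At each corner $c_j$ ($j\geq 2$) and $\widetilde{c}_k$ ($k\geq 3$), two of the four adjacent elementary regions carry a basepoint, or are forced to multiplicity zero as in Figure~\ref{fig:standardtraingles}. Since the domain of any such disk has non-negative multiplicities, it must be constant at these corners. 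Then the same propagation argument as in~\cite[Proposition 3.7]{honda2006contact} shows the domain vanishes away from a neighborhood of $\partial_1\Sigma$. So the domain of any disk from $\bm{u}_-^3$ is supported in the region drawn in Figure~\ref{fig:}, with boundary on $\alpha_1,\widetilde{\alpha}_1,\beta_1,\widetilde{\beta}_1$. Its moving corners are among $u_-$ and $d_3$, plus their targets.

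The second step is a case analysis in Figure~\ref{fig:}. We treat the region containing $w_1$, the one containing $z_1$, and the bottom corners (which contain $w_2$) as having multiplicity zero.

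First consider the $\widetilde{\alpha}_1$/$\widetilde{\beta}_1$ part. A domain with a corner at $u_-$ would have to use region $F$ or $I$. Region $I$ is adjacent to the multiplicity-zero bottom region across $\beta_1$. Region $F$ is bounded by $\widetilde{\alpha}_1$ and $\widetilde{\beta}_1$ next to $w_1$. Running the boundary from $u_-$ along $\widetilde{\alpha}_1$ or $\widetilde{\beta}_1$ in either direction, one reaches $y_-$, $x_1$, $x_2$ or $y_1$, and each of these meets a region containing $w_1$ or $z_1$. So no bigon or rectangle with a convex corner at $u_-$ avoids the basepoints.

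Next consider the $\alpha_1$/$\beta_1$ part. At $d_3$, the four quadrants are $A$, $B$ and their counterparts across $\alpha_1$. Since $d_3$ is the topmost intersection on $\alpha_1$, the region above $d_3$ on the relevant side wraps around the top of the diagram and connects to a multiplicity-zero region. This is the same observation used for region $A$ in Lemma~\ref{lem:f1nontrivial}. A disk leaving $d_3$ must therefore travel down $\alpha_1$ to $d_2$, $d_1$, $y_1$, $y_2$ or $c_1$. Travelling down $\alpha_1$ to $d_2$ or $d_1$ would produce a bigon with boundary on $\beta_1$ through $B$ or $C$. Such a bigon contradicts the minimal position hypothesis on $\overline{a}_1,\overline{b}_1$ from the preceding lemma. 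Travelling further down would cross $G$ or $H$, which abut $w_1$ and $z_1$.

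Rectangles that mix the two parts, with corners at both $u_-$ and $d_3$, are ruled out by the same multiplicity constraints. The orientation convention for $\partial\psi$ (alternating $\beta$ then $\alpha$) also fixes which quadrant at $d_3$ is convex, which removes the remaining option. Together these show $\partial\bm{u}_-^3=0$.

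The main obstacle is the bookkeeping at $d_3$. We must check carefully that the multiplicity-zero region really reaches the quadrant above $d_3$ across the wrap-around edge of Figure~\ref{fig:}. This is where the hypothesis that $\phi\circ\Delta_1^2$ sends the arc weakly to the left is used: it produces the two extra twisted strands of $\beta_1$, and so makes $d_3$, rather than $d_1$, the extremal corner. We also need to use that the diagram is in minimal position, to exclude the thin bigons between consecutive $d_i$.
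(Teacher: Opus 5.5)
There is a genuine gap. Your whole argument rests on the claim that no positive, basepoint-free, index one domain leaves $\bm{u}_-^3$, and that claim is false. So $\partial\bm{u}_-^3=0$ cannot be proved by non-existence.

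\emph{The counterexample.} Use the labels of Figure~\ref{fig:} and set $\bm{y}=\{u_+,d_2,c_2,\dots,c_N,\widetilde{c}_3,\dots,\widetilde{c}_n\}$. Then both $B+C+D+G+H$ and $B+C+D+E+F$ lie in $\pi_2(\bm{u}_-^3,\bm{y})$.
\begin{itemize}
\item Each is an annulus. One boundary component runs from $d_3$ leftwards along $\beta_1$, once around $\partial_1\Sigma$, to $d_2$, and back up $\alpha_1$. The other runs between $u_-$ and $u_+$ along $\widetilde{\alpha}_1$ and $\widetilde{\beta}_1$.
\item Neither meets $w_1$, $z_1$ or the bottom regions.
\item Each has index one. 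The regions $B,D,E,F,G,H$ are rectangles and $C$ is a hexagon, so the Euler measure is $-\tfrac12$. The point measures are $\tfrac14+\tfrac14$ at $u_-,d_3$ and $\tfrac14+\tfrac34$ at $u_+,d_2$, giving total index $1$.
\end{itemize}
These are exactly the ``mixed'' domains you dismiss. Your local analysis misses them for three reasons.
\begin{enumerate}
\item At $u_-$, the quadrants of the correct (outgoing) corner type are $F$ and $G$, not $F$ and $I$.
\item From $u_-$, the curve $\widetilde{\alpha}_1$ reaches $u_+$ in either direction with only basepoint-free regions on the domain's side. Running alongside the region containing $w_1$ or $z_1$ is no contradiction.
\item Minimal position only forbids an honest $\alpha_1/\beta_1$ bigon. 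It does not forbid a $d_3\to d_2$ loop that is one boundary component of an annulus.
\end{enumerate}

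\emph{The missing ideas.} The first is a cancellation, not a non-existence argument. The paper follows Ozsv\'ath--Szab\'o's analysis of annuli that double branched cover disks in $\Sym^2$. There is a one-parameter family of cuts out of the obtuse corner at $d_2$, and the conformal-angle matching condition holds an odd number of times along it. Hence each of the two classes has an odd number of holomorphic representatives, and together they contribute $0$ over $\F$.

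The second missing piece concerns your localization step. Fixing the corners $c_j$ and $\widetilde{c}_k$ is fine and matches the paper. But it does not confine the domain to the pictured neighbourhood of $\partial_1\Sigma$. In particular, it does not handle domains where $\partial\psi$ leaves $d_3$ to the \emph{right} along $\beta_1$; that arc exits the picture, and local multiplicity constraints do not exclude this case. The paper handles it differently: it uses Lipshitz's index formula to force the source to be an annulus with exactly one obtuse corner. It then uses an embeddedness argument for the boundary component on $\alpha_1\cup\beta_1$ to contradict the forced multiplicities.
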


For the proof we determine the homology classes of disks $\psi$ contributing to the differential. We use the following key facts. Firstly, the homology class of a disk is determined by its multiplicity in each of the elementary domains of the Heegaard diagram --- i.e. connected components of $\Sigma\setminus(\bm{\alpha}\cup\bm{\beta})$. Moreover, if $D$ is the domain of a curve $\psi\in\pi_2(\bm{x},\bm{y})$ then $\partial D$ is an oriented multi-curve which connects $\bm{x}$ to $\bm{y}$ that passes from elements of $\bm{x}$ to elements of $\bm{y}$ along $\beta$-curves and from elements of $\bm{y}$ to elements of $\bm{x}$ along $\alpha$ curves. In particular, following $\partial D$ from an element of $\bm{x}$ to an element of $\bm{y}$ along a $\beta$ curve, we must have larger multiplicity on the left than the right, and following $\partial D$ along an $\alpha$ curve from an element of $\bm{y}$ to an element of $\bm{x}$ we must have larger multiplicity on the left than the right. Likewise the multiplicities of two adjacent domains agree unless they are separated by a component of $\partial D$. We also use that the domain of any pseudo-holomorphic disk has non-negative multiplicities. If $\partial \psi$ is constant on some point $x\in\alpha\cap{\beta}$ then the multiplicities of domains separated by an arc in ${\alpha}$ or ${\beta}$ are the same. Finally, if $x\in\bm{x}\cap\bm{y}$ then the multiplicities of domains separated by an arc in ${\alpha}\cup{\beta}$ are the same, where $\alpha$ and $\beta$ are the curves with $x\in\alpha\cap\beta$. This follows from the fact that each $\alpha$ and $\beta$-curve have basepoints on either side; see Figure~\ref{fig:standardtraingles}, where one may ignore the $\gamma$-curve.

\begin{proof}
    Let $\bm{y}$ be an element of $\T_{\bm{\alpha}}\cap\T_{\bm{\beta}}$ such that $\langle\partial \bm{u}_-^3,\bm{y}\rangle\neq 0$. Let $\psi$ be a pseudo-holomorphic disk contributing to the differential.  Consider the positioning of the basepoints near $c_i$ for $i\neq 1$, and $\widetilde{c}_i$ for $i>2$. We readily see that $c_i\in\bm{y}$ for $i\neq 1$, and $\widetilde{c}_i\in\bm{y}$ for $i>2$.

    It thus suffices to determine the behaviour of $\psi$ in an appropriate neighborhood of $\alpha_1\cup\beta_1\cup\widetilde{\alpha}_1\cup\widetilde{\beta}_1$. The relevant portion of the Heegaard diagram is contained in a neighborhood of $\partial_1\Sigma$, which we have shown in Figure~\ref{fig:}. We proceed by cases. Either 
    \begin{enumerate}
        \item 
        
        $\bm{y}$ contains elements in $\alpha_1\cap\widetilde{\beta}_1$ and $\widetilde{\alpha}_1\cap\beta_1$, or
        
       \item $\bm{y}$ contains elements in $\alpha_1\cap\beta_1$ and $\widetilde{\alpha}_1\cap\widetilde{\beta}_1$. 
    \end{enumerate}

\textbf{Case 1.} Suppose $\bm{y}$ contains elements in $\alpha_1\cap\widetilde{\beta}_1$ and $\widetilde{\alpha}_1\cap\beta_1$. There are exactly two pairs of intersection points that yield a generator of the correct Alexander grading per~\cite[Lemma 4.2]{tovstopyat2018transverse}, as discussed in Section~\ref{sec:HFLintro}; we have that either $x_1,y_2\in\bm{y}$ or $x_2,y_1\in\bm{y}$, where $x_1,x_2\in\widetilde{\alpha}_1\cap\beta_1$ and $y_1,y_2\in\widetilde{\beta}_1\cap\alpha_1$ are as shown in Figure~\ref{fig:}.

Suppose $x_1,y_2\in\bm{y}$. Consider the behavior of $\partial\psi$ near $\beta_1$. Observe that $\partial\psi$ must exit from $d_3$ to the left --- as viewed in Figure~\ref{fig:} --- and pass to $x_1$, stopping there on the first time of passing. We can then see that $A=0,B=1$, $C=2$, $D=3$, $E=F$ and $I=0$. Consider the behavior of $\partial\psi$ near $\widetilde{\alpha}_1$. The positioning of basepoint $z_1$ and $w_1$ imply that $\partial\psi$ must exit $x_1$ to the right, as shown in Figure~\ref{fig:}, and stop at $u_-$ at the first time of passing. It follows that $E=0$, and $F=I$, so $E=F=I=0$. Consider the behavior of $\partial\psi$ near $\alpha_1$. Observe that $\partial\psi$ must exit upward from $y_2$ --- as viewed in Figure~\ref{fig:} --- and pass to $d_3$, stopping there on the first time of passing, so we see that $G=H+1$. Now consider the behavior of $\partial\psi$ near $\widetilde{\beta}_1$. Observe that $\partial\psi$ must exit from $u_-$ and travel counterclockwise to $y_2$, stopping at the first time of passing. Thus we see that $G=1$, but also that $G=D=3$, a contradiction.

Suppose $x_2,y_1\in\bm{y}$.  Consider the behavior of $\partial\psi$ near $\beta_1$. Observe that $\partial\psi$ must exit from $d_3$ to the left --- as viewed in Figure~\ref{fig:} --- and pass to $x_2$, stopping at the first opportunity. We thus see that $A=0,B=1$, $C=2$, $D=3$, $F=E+1$ and $I=0$. Consider the behavior of $\partial\psi$ near $\widetilde{\alpha}_1$. Observe that $\partial\psi$ must exit $x_2$ to the right --- as viewed in Figure~\ref{fig:} ---  stopping at $u_-$ on the first opportunity. We then see that $F=1$ and $E=C=0$, a contradiction. 

 \textbf{Case 2.}   Suppose $\bm{y}$ contains an element $z$ in $\alpha_1\cap\beta_1$ and an element $w$ in $\widetilde{\alpha}_1\cap\widetilde{\beta}_1$. We have two subcases; either the intersection point $z$ is in ${\Sigma}$ (Case 2.a)) or it is not (Case 2.b)).

  \textbf{Case 2.a)}  We turn now to the case $z\in\Sigma$; i.e. $z=c_1$. In turn we have that $w=y_-$ (Case 2.a)(i)) or $y_+$ (Case 2.b)(ii)), the intersection points as shown in Figure~\ref{fig:}.
    
   \textbf{Case 2.a) (i)}  Suppose $w=y_-$. Consider the behavior of $\partial\psi$ near $\widetilde{\alpha}_1$. Observe that $\partial\psi$ must exit $y_-$ to the right along $\widetilde{\alpha}_{1}$ and stop at $u_-$ on the first time of passing. In particular it follows that $E=C=0$. Consider in turn the behavior of $\partial\psi$ near $\beta_1$. Observe that $\partial\psi$ must exit $d_3$ to the left and stop at $c_1$ on the first time of passing. In particular we see that $C=B+1$, so that $B=-1$, a contradiction.
    
      \textbf{Case 2.a) (ii)} Suppose $w=y_+$. Consider the behaviour of $\partial\psi$ in a neighborhood of $\beta_1$. Observe that $\partial\psi$ must exit $d_3$ to the left along $\beta_1$ and stop at $c_1$ at the first time of passing. In particular $A=0$, $B=1$, $C=2$, $D=3$ and $F=E+1$, $I=1$. Consider the behavior of $\partial\psi$ in a neighborhood of $\alpha_1$. Observe that $\partial\psi$ must exit $c_1$ upwards along $\alpha_1$ and stop at $d_3$ at the first time of passing. In particular $G=H+1$. Consider the behaviour of $\partial\psi$ in a neighborhood of $\widetilde{\alpha}_1$. Observe that $\partial\psi$ must exit $y_+$ to the right and stop at $u_-$ at the first time of passing. From this it follows that $E=0$ and $F=D-1$ and $F=I$. But $D=3$ and $I=1$, so this is a contradiction. Having dealt with the final case, we have concluded the proof of Case 2.a.
 
  \textbf{Case 2.b)} If $z$ is in $-{\Sigma}$ (the upper half of the Heegaard diagram as viewed in, say, Figure~\ref{fig:}) then $w\in\Sigma$ as else $\bm{y}$ is not of the correct Alexander grading, again by~\cite[Lemma 4.2]{tovstopyat2018transverse}. Thus $w=u_\pm$. We first claim that the $w=u_-$ case is impossible. To see this, observe that $u_-\in \bm{y}\cap\bm{u}_-^3$, and the positioning of the basepoints in neighborhoods of $\widetilde{\alpha}_1$ and $\widetilde{\beta}_1$ would imply that $\psi$ is constant on $u_-$ and consequently that the differential would be counting bigons between $\alpha_1$ and $\beta_1$, which do not occur since we have taken our curves to be in minimal position.
  
  We now claim that $d_3\not\in \bm{y}$. Suppose otherwise. Then the positioning of the basepoints near $c_1$ imply that $\psi$ is constant on $d_3$, and the differential counts bigons cobounded by $\widetilde{\alpha}_1$ and $\widetilde{\beta}_1$ which do not contain basepoints. There are no such bigons, so we have a contradiction.
    
  Traveling along $\partial\psi$ in the reverse of the direction dictated by the orientation of $\psi$, observe that $-\partial\psi$ can exit $u_-$ to the right or to the left along $\widetilde{\alpha}_1$, but in either case $-\partial\psi$ must stop at $u_+$ on the first time of passing. From the positioning of the basepoints in the regions adjacent to $u_+$, we see that in the first case $-\partial\psi$ must exit $u_+$ downwards along $\widetilde{\beta}_1$ and stop at $u_-$ on the first time of passing. Similarly, in the second case, $-\partial\psi$ must exit $u_+$ upwards and stop at $u_-$ on the first time of passing. Consequently we have that the homology classes of disks contributing to $\langle\partial \bm{u}_-^3,\bm{y}\rangle$ come in pairs; we either have that $G=H=1$ and $E=F=0$ or $E=F=1$ and $G=H=0$. In either case we have that $I=0$ and $D=C=1$.

  We have two more sub-cases according to whether $\partial \psi$ exits $d_3$ along $\beta_1$ to the left or to the right, as viewed in Figure~\ref{fig:}.
  
   \textbf{Case 2.b) (i)}  If $\partial \psi$ exits $d_3$ to the left along $\beta_1$ --- as viewed in Figure~\ref{fig:} --- then it must terminate at $d_1$ or $d_2$ and return directly to $d_3$ along $\alpha_1$. If it terminates at $d_1$ then we have that $A=0$, $B=1$ and $C=D=2$, a contradiction.
   
   Thus it terminates at $d_2$ and $A=0$, while $B=C=1$. In particular, $\pi_2(\bm{u_-^3},\bm{y})$ is non-empty; it contains two homotopy classes of curves that correspond to annuli in $S\times S$ that project to disks in $\Sym^2(S)$ under the group action that interchanges the coordinates. These two classes of annuli are lifts of the two immersed annuli we can see in $S$  --- namely the domains $\{B,C,D,G,H\}$ and $\{B,C,D,E,F\}$ --- to $S\times S$ under the projection map from $S\times S$ to the first coordinate.  See~\cite[Lemma 3.6]{ozsvath2004holomorphic} and~\cite[Lemma 4.1]{lipshitz2006cylindrical} for more general versions of this argument. 

   Observe that there is a $(-1,1)$-parameterized family of complex structures on these annuli, with complex structures inherited from $S$. The parameter $t\in(-1,1)$ measures the portion of a cut from $d_2$ along  $\beta_1$ to the relevant point on $\widetilde{\alpha}_1$ in the case that $t\leq 0$ or a cut from $d_2$ along  $\alpha_1$ to the relevant point on $\widetilde{\beta}_1$ in the case that $t\geq 0$. Observe also that these cuts are from the obtuse right angled corner at $d_2$ along non-separating curves.

   Annuli that double branched cover disks that contribute to the differential are exactly those that admit an involution which: 1) interchanges the boundary components and 2) satisfies that the conformal angle swept out by the $\alpha$ segments of each boundary component agree~\cite[Proof of Lemma 9.3]{ozsvath2004holomorphic}. If there exists such an involution then there is exactly one pseudo-holomorphic representative. Ozsv\'ath and Szab\'o showed that there are an odd number of values of the parameter $t$ above for which this condition holds~\cite[Proof of Lemma 3.4]{OSHoldisks3propapp}. Thus the contributions of pseudo-holomorphic curves with domains given by this pair of homotopy classes of annuli---namely those with domains $\{B,C,D,G,H\}$ and $\{B,C,D,E,F\}$---cancel.

       \textbf{Case 2.b) (ii)}  If $\partial \psi$ exits $d_3$ to the right along $\beta_1$, as viewed in Figure~\ref{fig:}, then wherever it terminates it must do so before arriving in a neighborhood of $c_1$ due to the positioning of the basepoints. Likewise $\partial \psi$ must arrive at $d_3$ along $\alpha_1$ from above, as viewed in Figure~\ref{fig:}. In particular it follows that the multiplicities of $A, B,C$ and $D$ are one, while the elementary domain northeast of $d_3$ is multiplicity $2$. Recall that in Lipshitz's cylindrical reformulation of Heegaard Floer homology, counts of pseudo-homolomorphic disks are equivalent to counts of pseudo-holomorphic curves satisfying certain conditions~\cite{lipshitz2006cylindrical}. Let $u:R\to S\times\R\times[0,1]$ be such a curve with domain satisfying the conditions we have deduced thus far. Recalling that our pseudo-holomorphic curves are constant on $N-2$ of the $c_i$ intersection points, Lipshitz's local formula for the Maslov index~\cite[Equation 5, Equation 6]{lipshitz2006cylindrical} implies that \begin{align*}
    1&=N-\chi(R)-(N-2)+2e(D)\\&=2+\chi(R)-\frac{k}{2}+\frac{l}{2}\\&=4-2g(R)-|\partial R|-\frac{k}{2}+\frac{l}{2}.
       \end{align*}

   \noindent Here $k$ is the number of acute right angled corners, $l$ is the number of obtuse right angled corners, $N$ is the genus of $S$, $D$ is the domain of the holomorphic curve and $e(D)$ is its Euler measure. See~\cite[Section 4]{lipshitz2006cylindrical} for details. We already have that there are at least three acute corners --- namely at $u_-,u_+$ and $d_3$ --- and that $|\partial R|=2$. It follows that
  \begin{equation*}
 1\leq \frac{1}{2}-2g(R)+\frac{l}{2}.
 \end{equation*}

\noindent On the other hand $l\leq 1$, so in fact we have that $l=1$ and $g(R)=0$, so that $R$ is an annulus.
Let $\partial_\pm A$ denote the images of the two boundary components of $R$ in $S$. Without loss of generality we take $\partial_-A$ to be the component supported in $\widetilde{\beta}_1\cup\widetilde{\alpha}_1$. Observe that $\partial_+A$ is homotopic to $\partial_-A$ in $S$, with a homotopy induced by a $[0,1]$-parameterization of the obvious foliation of the annulus $R$ by circles. Since we are assuming that $\beta_1$ and $\alpha_1$ do not form any bigons it follows from~\cite[Theorem 2.7]{MR804478} that $\partial_+A$ is embedded in $S$. In particular, there is a path from any point on $\partial_+A$ to any basepoint in $S$ that does not intersect $\partial A$. It follows that, of the pairs of domains separated by an arc component of $\partial_+A$ in $\alpha_1$ or $\beta_1$, at least one has multiplicity zero. This contradicts the fact that we deduced earlier that $B=2$ and $C=1$, excluding this case. \end{proof}

    Proposition~\ref{thm:HFKaddtwists} will now be a consequence of the following Lemma.

\begin{lemma}\label{lem:surj}
    Suppose $\phi:\Sigma\to\Sigma$ is a diffeomorphism of a surface $\Sigma$ with $|\partial\Sigma|>1$ such that $\phi\circ\Delta_1^2$ sends some arc $a$ with exactly one endpoint on $\partial_1\Sigma$ weakly to the left at $\partial_1\Sigma\cap a$.
\end{lemma}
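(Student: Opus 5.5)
Lemma~\ref{lem:surj} as displayed has lost its conclusion. From the surrounding discussion it should read: under these hypotheses, $f_1^*$ is surjective onto $\widehat{\HFL}((-Y)_0(\eta),-L,-[\Sigma],1-G)\cong\F\langle\bm{v}_+,\bm{v}_-\rangle$. Concretely, I take the claim to be $f_1^*([\bm{u}_+])=[\bm{v}_+]$ and $f_1^*([\bm{u}_-^3])=[\bm{v}_-]$, possibly up to adding $[\bm{v}_+]$. Lemma~\ref{lem:0surgery} identifies the target as spanned by $\bm{v}_\pm$, so two preimages with linearly independent images suffice.

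The first half is already done. Weak left-veering of $\phi\circ\Delta_1^2$ at $\partial_1\Sigma$ implies the hypothesis of Lemma~\ref{lem:f1nontrivial}, which gives $f_1^*([\bm{u}_+])=[\bm{v}_+]$. For the second generator, Lemma~\ref{lem:cycle} shows $\bm{u}_-^3$ is a cycle. The remaining task is to compute $f_1^*(\bm{u}_-^3)$ by counting triangles in the triple diagram $\mathcal{H}_{\bm{\gamma},\bm{\beta},\bm{\alpha}}$, with $\gamma_1,\widetilde{\gamma}_1$ superimposed on the picture of Figure~\ref{fig:} as in Figure~\ref{fig:uniquetriangle}. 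Weak admissibility was checked in the proof of Lemma~\ref{lem:f1nontrivial}.

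The triangle count proceeds as in that proof.
\begin{enumerate}
\item For every coordinate $c_i$ ($i\ge2$) and $\widetilde{c}_j$ ($j\ge3$), the basepoint configuration of Figure~\ref{fig:standardtraingles} forces the small standard triangle to $c_i'$ or $\widetilde{c}_j'$. So every output generator contains these points.
\item Near $\partial_1\Sigma$, the output must contain $x$, the unique point of $\gamma_1\cap\bm{\alpha}$.
\item Hence the only possible outputs in the correct Alexander grading are $\bm{v}_\pm$, and it remains to count the triangles from $\bm{u}_-^3$ to each.
\item The corner at $u_-$ with vertex $\widetilde{\theta}_1$ is pinned down by the basepoints $z_1,w_1$. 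This forces a small triangle in the region analogous to $F$ ending at $v_-$; it is the mirror of the region $P$ used for $u_+\mapsto v_+$.
\item On the $\alpha_1,\beta_1,\gamma_1$ part, $\partial\psi$ runs from $d_3$ along $\alpha_1$ downward past $d_2,d_1$ to $x$, and the shadow is the thin strip between $\beta_1$ and $\gamma_1$ containing the extra twist regions.
\end{enumerate}

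The multiplicities should then be read off region by region with the same argument as in Lemma~\ref{lem:cycle}. Region $A$ is connected to a multiplicity-zero region without crossing $\partial\psi$, so $A=0$. The orientation conventions along $\beta_1$, $\alpha_1$ and $\gamma_1$ then determine the multiplicities of $B$, $C$, $D$ and the region near $\theta_1$ step by step. The goal is to show there is exactly one admissible positive domain from $\bm{u}_-^3$ to $\bm{v}_-$, of index $0$ and with a unique holomorphic representative (embedded triangle, Riemann mapping). Any domain to $\bm{v}_+$, if one exists, only changes the image by $[\bm{v}_+]$, which does not affect independence.

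I expect the main obstacle to be this step. The domain from $d_3$ to $x$ winds around the annulus twice, so it may not be an embedded triangle: it could have multiplicity $2$ regions or correspond to a higher-genus or annular source. To handle that, I would use Lipshitz's index formula as in Case 2.b) of Lemma~\ref{lem:cycle}. This should show that only one class has Maslov index $0$ and that its representative is a disk, and that competing classes (for example, those ending at $d_1$ or $d_2$ instead of passing through) have negative multiplicity or pass through a basepoint. The $\bm{v}_-$ coefficient is then $1$, so $f_1^*$ is surjective. By exactness of the triangle~(\ref{eq:exactneg}) and Equation~\ref{eq:invariance}, this yields Proposition~\ref{thm:HFKaddtwists}.
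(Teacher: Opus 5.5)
Your reduction is the paper's: the standard triangles at the $c_i,\widetilde{c}_j$, the forced corner $x$, and the Alexander grading confine $f_1(\bm{u}_-^3)$ to the span of $\bm{v}_\pm$. A stray $\bm{v}_+$ term is harmless, and $\bm{u}_+\mapsto\bm{v}_+$ and $\partial\bm{u}_-^3=0$ come from Lemmas~\ref{lem:f1nontrivial} and~\ref{lem:cycle}. The gap is that the content of the lemma is the explicit determination of the triangle from $\bm{u}_-^3$ to $\bm{v}_-$. You leave this as a plan, and the one concrete claim you make about it, step (4), is false. The coordinate $\widetilde{\theta}_1$ of $\bm{\theta}$ is the point of $\widetilde{\beta}_1\cap\widetilde{\gamma}_1$ adjacent to $u_+$ and $v_+$; it is a corner of the small region $P$ in the triangle $\bm{u}_+\mapsto\bm{v}_+$. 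The other point of $\widetilde{\beta}_1\cap\widetilde{\gamma}_1$, next to $u_-$, is not a coordinate of $\bm{\theta}$, so there is no mirror-image small triangle at $u_-$. In the labelling of Figure~\ref{fig:fdtc<-1}, positivity actually forces the following. Along $\widetilde{\alpha}_1$, $\partial\psi$ uses only the short arc from $v_-$ to $u_-$: the other direction forces $G<0$, and overshooting $u_-$ forces $J<0$. Along $\widetilde{\beta}_1$, it goes the long way from $u_-$ over the top to $\widetilde{\theta}_1$, since the short way has multiplicity-zero regions on both sides. So the shadow there is the multiplicity-one band $I,T,S,R,Q$ running around $\widetilde{\beta}_1$. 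Because that band crosses $\alpha_1$, the $\alpha_1$-constraints ($T=S$, $L=0$, $E=F$, $M=N$) have to be solved together with the $\widetilde{\alpha}_1$-constraints, not separately.

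In $-\Sigma$, the piece with corners $x,d_3,\theta_1$ is likewise not a thin strip. $\partial\psi$ goes straight up $\alpha_1$ from $x$ to $d_3$, and leftward along $\beta_1$ through both extra turns to $\theta_1$. This gives $U=0$, $A=1$, $B=2$, $D=3$; if your ``$A=0$'' means the region above the last turn of $\beta_1$, that region is the paper's $U$. Your anticipated obstacle does not need Lipshitz's index formula. This piece lifts to an embedded triangle in the universal cover of the collar of $\partial_1\Sigma$, bounded by $\gamma_1$, one lift of $\alpha_1$, and a monotone staircase lift of $\beta_1$. So it is an immersed triangle with convex corners. The full shadow is then a disjoint union of immersed triangles: the standard ones, the band, and this spiral. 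The Riemann mapping theorem gives exactly one holomorphic representative, which is how the paper finishes. Until that computation is carried out, with the correct band replacing your step (4), the proposal does not establish $\langle f_1(\bm{u}_-^3),\bm{v}_-\rangle=1$.
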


We will proceed by a diagrammatic analysis similar to that used in the proof of Lemma~\ref{lem:f1nontrivial}. We will refer frequently to Figure~\ref{fig:fdtc<-1}, which plays a role similar to that Figure~\ref{fig:uniquetriangle} played in the proof of Lemma~\ref{lem:f1nontrivial}.

\begin{proof}[Proof of Lemma~\ref{lem:surj}]
    Since  $f_1^*([\bm{u}_+]) =[\bm{v}_+]$ by Lemma~\ref{lem:f1nontrivial}, it is enough to show that $[\bm{v}_-]$ is also in the image of $f_1^*$. For this it suffices to show that $f_1(\bm{u}_-^3)=\bm{v}_-$ or $f_1(\bm{u}_-^3)=\bm{v}_++\bm{v}_-$.
    
    First, we show that $f_1(\bm{u}_-^3)$ can only be a linear combination of $\bm{v}_+$ and $\bm{v}_-$. This follows from the positions of the basepoints near $c_i, c_i'$ for $i>1$, $\widetilde{c}_j, \widetilde{c}_j'$ for $j> 2$  --- see Figure~\ref{fig:standardtraingles} --- which implies that if $\bm{x}$ is an element of $\T_{\bm{\alpha}}\cap\T_{\bm{\beta}}$ with $\langle f_1(\bm{u}_-^3),\bm{x}\rangle\neq 0$ then $c_i'\in\bm{x}$ for $i>1$, $\widetilde{c}_j'\in\bm{x}$ for $j>2$. Now, observe that $x\in \bm{x}$, as $x$ is the only intersection point lying on $\gamma_1$. Finally, observe that $v_\pm$ are the only intersection points in $\widetilde{\alpha}_1$ for which $\bm{x}$ would be of the correct Alexander grading. Recall here that the Alexander grading must be preserved by $f_1$, and can be computed via a count of how many intersection points are in the lower part of the Heegaard surface (see Section~\ref{sec:HFLintro} or~\cite{lipshitz2016heegaard} for instance), so these counts for $\bm{u}_-^3$ and $\bm{x}$ must match.
    
    It thus suffices to show that there is a single pseudo-holomorphic triangle contributing to $\langle f_1(\bm{u}_-^3),\bm{v}_-\rangle$. Let $\psi$ be a homotopy class of disk from $\bm{u}_-^3$ to $\bm{v}_-$. As noted above, the positioning of the basepoints near the $c_i$ intersection points for $i>1$ and the $\widetilde{c}_j'$ intersection points for $j>2$ imply that the shadow of $\psi$ is locally as in Figure~\ref{fig:standardtraingles}. It will thus suffice to determine  the part of the Heegaard triple diagram $\mathcal{H}_{\bm{\gamma},\bm{\beta},\bm{\alpha}}$ shown in Figure~\ref{fig:fdtc<-1}.

           \begin{figure}[ht]
\centering
   
\begin{tikzpicture}

\draw[dashed] (0,0) rectangle (10,12.5);

\draw[green, thick] (0,9) -- (10,9);


  \fill[orange!20]
  (0,11.5)  -- 
  (5,11.5) -- 
(5,10.5) -- (5.5,10.5) arc[start angle=270, end angle=360, radius=0.5] arc[start angle=180, end angle=90, radius=0.5]--
(10,11.5)  -- (10,9)
 --(0,9);


  \fill[orange!50]
  (0,9)  -- 
  (10,9) -- 
(10,10.5) --
(7,10.5) arc[start angle=90, end angle=180, radius=0.5] -- (6.5,10) arc[start angle=0, end angle=-90, radius=0.5] --
  (5,9.5) -- (5,10.5)--
  (0,10.5) -- 
  (0,9.5);


  \fill[orange!80]
  (0.5,9) arc[start angle=180, end angle=90, radius=0.5] -- 
  (5,9.5) -- 
  (5,9) -- 
  (0.5,9) -- 
  cycle;


   \fill[orange!20]
  (1.5,2)  -- 
  (8.5,2) arc[start angle=-90, end angle=0, radius=0.5]
  -- (9,2.5) -- (9,7.5)  arc[start angle=0, end angle=90, radius=0.5] -- (1.5,8) arc[start angle=90, end angle=180, radius=0.5]  --(1,2.5) arc[start angle=180, end angle=270, radius=0.5];

     \fill[white]
  (3,2)  -- (7,2) --(7,6) arc[start angle=0, end angle=90, radius=1]-- (4,7) arc[start angle=90, end angle=180, radius=1];

   \fill[white]
  (0,0)  -- (0,3) --(3,3) -- (3,0)--(0,0);

  \draw[red, thick] (5,0) -- (5,12.5);


 \draw[blue, thick] (6.5,0) arc[start angle=0, end angle=90, radius=0.5];
 \draw[blue, thick] (6,0.5) -- (1,0.5);
  \draw[blue, thick] (1,0.5) arc[start angle=270, end angle=180, radius=0.5];
 \draw[blue, thick] (0.5,1) -- (0.5,9);
   \draw[blue, thick] (0.5,9) arc[start angle=180, end angle=90, radius=0.5];
 
  \draw[blue, thick] (1,0.5) arc[start angle=270, end angle=180, radius=0.5];
  \draw[blue, thick] (6,9.5) -- (1,9.5);
 \draw[blue, thick] (6,9.5) arc[start angle=270, end angle=360, radius=0.5];
 \draw[blue, thick] (6.5,10) arc[start angle=180, end angle=90, radius=0.5];
  \draw[blue, thick] (7,10.5) -- (10,10.5);
    \draw[blue, thick] (0,10.5) -- (5.5,10.5);
     \draw[blue, thick] (5.5,10.5) arc[start angle=270, end angle=360, radius=0.5];

      \draw[blue, thick] (6.5,10) arc[start angle=180, end angle=90, radius=0.5];
  \draw[blue, thick] (7,10.5) -- (10,10.5);
    \draw[blue, thick] (0,10.5) -- (5.5,10.5);
     \draw[blue, thick] (5.5,10.5) arc[start angle=270, end angle=360, radius=0.5];
         \draw[blue, thick] (6,11) arc[start angle=180, end angle=90, radius=0.5];
           \draw[blue, thick] (6.5,11.5) -- (10,11.5);
             \draw[blue, thick] (0,11.5) -- (5,11.5);
                \draw[blue, thick] (5,11.5) arc[start angle=270, end angle=360, radius=0.5];

                 \draw[blue, thick] (5.5,12) -- (5.5,12.5);


\draw[red, thick] (0,5) -- (3,5);
 \draw[red, thick] (3,5) arc[start angle=90, end angle=-90, radius=1];
\draw[red, thick] (0,3) -- (3,3);


\draw[red, thick] (10,5) -- (7,5);
 \draw[red, thick] (7,5) arc[start angle=90, end angle=270, radius=1];
\draw[red, thick] (10,3) -- (7,3);


\draw[green, thick]  (1.5,2) -- (8.5,2);

\draw[green, thick]  (8.5,2) arc[start angle=270, end angle=360, radius=0.5];

\draw[green, thick]  (9,2.5) -- (9,7.5);

\draw[green, thick]  (9,7.5) arc[start angle=0, end angle=90, radius=0.5];
\draw[green, thick]  (1.5,8) -- (8.5,8);

\draw[green, thick]  (1.5,8) arc[start angle=90, end angle=180, radius=0.5];

\draw[green, thick]  (1,7.5) -- (1,2.5);

\draw[green, thick]  (1,2.5) arc[start angle=180, end angle=270, radius=0.5];


\draw[blue, thick]  (3,2) arc[start angle=180, end angle=270, radius=1];
\draw[blue, thick]  (4,1) -- (6,1);
\draw[blue, thick]  (6,1) arc[start angle=270, end angle=360, radius=1];

\draw[blue, thick]  (7,6) -- (7,2);

\draw[blue, thick]  (7,6) arc[start angle=0, end angle=90, radius=1];
\draw[blue, thick]  (4,7) -- (6,7);

\draw[blue, thick]  (4,7) arc[start angle=90, end angle=180, radius=1];
\draw[blue, thick]  (3,6) -- (3,2);

  \filldraw[black] (5,0.5) circle (2pt);
    \node at (4.8,0.3) {\( {c}_1 \)};

      \filldraw[black] (5,9.5) circle (2pt);
    \node at (5.2,9.3) {\( {d}_1 \)};
    
          \filldraw[black] (5,10.5) circle (2pt);
    \node at (5.2,10.2) {\( {d}_2 \)};

         \filldraw[black] (5,11.5) circle (2pt);
    \node at (5.2,11.2) {\( {d}_3 \)};

          \filldraw[black] (5,9) circle (2pt);
    \node at (5.2,8.7) {\( x \)};

          \filldraw[black] (0.5,9) circle (2pt);
    \node at (0.7,8.7) {\( \theta_1 \)};

  \filldraw[black] (3,5) circle (2pt);
    \node at (2.7,5.3) {\(y_- \)};
    
     \filldraw[black] (7,5) circle (2pt);
    \node at (7.3,5.3) {\(y_+ \)};

      \filldraw[black] (1,3) circle (2pt);
    \node at (1.3,2.7) {\( v_- \)};

      \filldraw[black] (0.5,3) circle (2pt);
    \node at (0.2,2.7) {\( x_2 \)};
     \filldraw[black] (0.5,5) circle (2pt);
    \node at (0.2,5.2) {\( x_1 \)};

      \filldraw[black] (5,1) circle (2pt);
    \node at (5.2,0.8) {\( y_2 \)};
     \filldraw[black] (5,7) circle (2pt);
    \node at (5.2,7.2) {\( y_1 \)};

      \filldraw[black] (3,3) circle (2pt);
    \node at (3.3,2.7) {\( u_- \)};

 \filldraw[black] (7,3) circle (2pt);
    \node at (7.3,2.7) {\( u_+\)};

          \filldraw[black] (9,3) circle (2pt);
    \node at (9.3,2.7) {\( v_+ \)};

       \filldraw[black] (7,2) circle (2pt);
    \node at (7.2,1.7) {\( \widetilde{\theta}_1 \)};


       \node at (2.5,10.7) {\( A\)};

       \node at (2.5,9.8) {\( B\)};

 \node at (5.5,9.8) {\( A\)};

 \node at (2.5,9.2) {\( D\)};
 
    \node at (8,9.8) {\( B\)};

       \node at (8,11.8) {\( U\)};

   \node at (0.2,8.5) {\( E\)};
     \node at (2.5,8.5) {\( F\)};

       \node at (8,8.5) {\( E\)};

       \node at (2.5,7.5) {\( T\)};

 \node at (8,7.5) {\( S\)};

     \node at (0.2,4) {\( G\)};
      \node at (0.7,4) {\( H\)};
     \node at (2.5,4) {\( I\)};
    \node at (3.5,4) {\( w_1\)};

   \node at (6.5,4) {\( z_1\)};

 \node at (4.5,4) {\( J\)};

 \node at (5.5,4) {\( K\)};
     \node at (8,4) {\( R\)};
      \node at (9.5,4) {\( G\)};

   \node at (2.5,2.5) {\( P\)};
    \node at (8,2.5) {\( Q\)};

          \node at (4.5,1.2) {\( M\)};

 \node at (5.5,1.2) {\( N\)};

    \node at (2.5,1.2) {\( L\)};

  \node at (0.4,0.4) {\( 0\)};
  \node at (9.4,0.4) {\( 0\)};


    \node at (9,9.2) {\( \gamma_1\)};
        \node at (9,8.1) {\(\widetilde{ \gamma}_1\)};

         \node at (2.7,6.6) {\(\widetilde{ \beta}_1\)};
            \node at (2,5.4) {\(\widetilde{ \alpha}_1\)};

              \node at (4.5,5.4) {\({ \alpha}_1\)};
              
              \node at (6.6,0.5) {\({ \beta}_1\)};
   
\end{tikzpicture}
    \caption{A neighborhood of $\partial_1\Sigma$ in the Heegaard triple for $f$ when $\Delta_1^2\circ\phi$ sends the arc $a_1$ weakly to the left. Here $\Delta_1$ is a right-handed Dehn twist. The Heegaard diagram $\mathcal{H}_0(-L)$ is obtained by forgetting the $\beta$-curves, which are shown in blue. The Heegaard diagram $\mathcal{H}(-L)$ is obtained by forgetting the $\gamma$-curves, which are shown in blue. }\label{fig:fdtc<-1}
   \end{figure}

Consider the behavior of $\psi$ near $\widetilde{\alpha}_1$.  Observe that $\partial\psi$ passes from $v_-$ to $u_-$ along $\widetilde{\alpha}_1$. Note that $\partial\psi$ cannot go clockwise from $v_-$ to $u_-$ along $\widetilde{\alpha}_1$, since this would require that $G<0$, violating the principle of positivity of domains. Thus, it goes counterclockwise. However, $\partial\psi$ cannot pass $u_-$, as else we would require that $J<0$, violating the principle of positivity of domains once again. It follows that $I=P+1$. Since $\partial\psi$ does not pass along any other segment of $\widetilde{\alpha}_1$ other than the one between $I$ and {$P$}, it follows that the multiplicities of the regions connected by all other segments of $\widetilde{\alpha}_1$ are equal. In particular we have that; $H=L=F$, $G=E=0$, $I=P+1=T$,  $J=0$, $K=0$ and $Q=R=S$.

   Now consider the behavior of $\psi$ near $\alpha_1$.  Observe that $\partial\psi$ passes from $x$ to $d_3$ along $\alpha_1$. Since $J=K=0$, we see that $\partial\psi$ must pass vertically from $x$ to $d_3$  and stop on the first instance it reaches $d_3$. It follows that $A=U+1$, $B=A+1$, and $D=B+1$. As in the previous paragraph, since $\partial\psi$ avoids all other segments of $\alpha_1$, it follows too that $L=0$, $E=F$, $T=S$, and $M=N$. Combining these identities with the fact that $E=0$, we see that $F=H=L=0$.  
   
  Consider the behaviour of $\psi$ near $\beta_1$. Note that $\partial\psi$ must go from $d_3$ to $\theta_1$ along $\beta_1$. Since there are zeros on the right and left of $\beta_1$ at the bottom of Figure~\ref{fig:fdtc<-1}, it follows that $\partial\psi$ must exit $d_3$ to the left and stop on the first instance it reaches $\theta_1$. It follows that $A=1$, $B=2$, $C=3$, and $D=3$, while every region in $-\Sigma$ --- with the exception of the domains shown in Figure~\ref{fig:fdtc<-1} --- has the same multiplicity, $0$.
   
Now consider the behaviour of $\psi$ near $\widetilde{\beta}_1$. Observe that $\partial\psi$ passes from $u_-$ to $\widetilde{\theta}_1$. Since $J=0$, there are zeros on the left and right of the segment of $\widetilde{\beta}_1$ that passes counterclockwise --- as in Figure~\ref{fig:fdtc<-1} --- from $u_-$ to $\widetilde{\theta}_1$, $\partial\psi$ must pass clockwise from $u_-$ to $\widetilde{\theta}_1$ and stop on the first instance it reaches $\widetilde{\theta}_1$. As a result we have that $M=0$ and $N=0$. It also follows from the above identities that $I=T=S=R=Q=1$.

This entirely determines the shadow of $\psi$; namely it has $I=T=S=R=Q=A=1,B=2, D=3$ and multiplicity zero in all other regions. Observe that $\psi$ has a unique pseudo-holomorphic representative by the Riemann mapping theorem. The desired result follows.\end{proof}

    We conclude this subsection with the proof of Proposition~\ref{thm:HFKaddtwists}.
\begin{proof}[Proof of Proposition~\ref{thm:HFKaddtwists}]
   Since $\rank(\widehat{\HFL}(-{Y}_0,-{L},{-[\Sigma]},1-G))=2$ by Lemma~\ref{lem:0surgery}, and $f_1^*$ is surjective by Lemma~\ref{lem:surj}, the result follows from the exact triangle~(\ref{eq:exactneg}). 
\end{proof}

\subsection{Rank Bounds for Link Floer Homology}

We conclude this section by deducing the lower bounds on ${\rank(\widehat{\HFL}(L_\phi, {[\Sigma]},1-G))}$ promised in the introduction. To do so, we first give bounds on $${\rank(\widehat{\HFL}(L_\phi, {[\Sigma]},1-G))}$$ where $\phi$ has $-1<\FDTC(\phi)<1$ and then extract general bounds via applications of Proposition~\ref{thm:HFKaddtwists},  and Proposition~\ref{thm:HFKaddtwists1}. We begin with two purely geometric lemmas. The first is a standard fact we include for the sake of completeness.

\begin{lemma}\label{lem:nonidentityinterestingbasis}
    Suppose that for each arc $a$ connecting some component $\partial_i\Sigma$ to a component $\partial_j\Sigma$ with $j\neq i$ the map $\phi:\Sigma\to \Sigma$ neither sends $a$ strongly to the left nor strongly to the right at $\partial_i\Sigma\cap a$. Then $\phi$ is isotopic to the identity, so that in particular $\FDTC(\phi)=0$.
\end{lemma}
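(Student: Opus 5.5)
The plan is to show that $\phi$ fixes, up to isotopy rel endpoints, every arc of a basis of arcs of $\Sigma$ all of whose arcs join distinct boundary components. Then we apply the Alexander method: a diffeomorphism fixing $\partial\Sigma$ pointwise and fixing each arc of a basis of arcs (rel endpoints) is isotopic rel boundary to a map supported in the complement of the arcs. That complement is a disk, so by the Alexander trick such a map is isotopic to the identity. Consequently $\FDTC(\phi,\partial_i\Sigma)=0$ for every $i$, since $\lfloor \mathrm{id}^n\rfloor=0$ for all $n$.

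\textbf{Step 1: each crossing arc is fixed.} Let $a$ join $\partial_i\Sigma$ to $\partial_j\Sigma$ with $i\neq j$. Put $a$ and $\phi(a)$ in minimal position rel endpoints (for instance as geodesics in the universal cover, as in the paper's alternative definition of right and left). If $\phi(a)$ is not isotopic to $a$ rel endpoints, then near the common endpoint on $\partial_i\Sigma$ the arc $\phi(a)$ leaves strictly to the left or strictly to the right of $a$. In other words, $\phi$ sends $a$ strongly to the left or to the right at $\partial_i\Sigma\cap a$, contradicting the hypothesis. Hence $\phi(a)\simeq a$ rel endpoints. This is essentially the definition, together with the standard fact that two non-isotopic arcs with a common endpoint, once in minimal position, are distinguished by their initial germs.

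\textbf{Step 2: a suitable basis exists.} We need a basis of arcs $\{a_k\}$ for $\Sigma$ in which every arc has endpoints on two distinct boundary components. Since $|\partial\Sigma|\geq 2$ in this section, such a basis exists. First connect $\partial_1\Sigma$ to each other boundary component by disjoint arcs. Then, for each handle, take the usual pair of arcs, but slide their endpoints so that one endpoint lies on $\partial_1\Sigma$ and the other on $\partial_2\Sigma$. Concretely, realize the handle arcs as bands starting on $\partial_1\Sigma$, running around a meridian or longitude, and ending on $\partial_2\Sigma$, passing alongside the arc from $\partial_1\Sigma$ to $\partial_2\Sigma$. A check of Euler characteristic, or equivalently of the number $2g+m-1$ of arcs, confirms that the complement is still a disk. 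This step is exactly where the standing assumption $|\partial\Sigma|\geq 2$ is used.

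\textbf{Step 3: simultaneous isotopy.} Step 1 gives $\phi(a_k)\simeq a_k$ for each arc individually. We then upgrade this to a single isotopy fixing all the arcs at once, inductively, using the standard fact that pairwise disjoint arcs which are individually isotopic to disjoint arcs can be isotoped simultaneously. This follows from the bigon criterion, as in the Alexander method in Farb--Margalit. Afterwards $\phi$ restricts to a map of the cut-open disk fixing its boundary, which the Alexander trick shows is isotopic to the identity.

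I expect the main obstacle to be Step 2, specifically keeping the complement of the arcs a disk while forcing every arc to cross between distinct boundary components. Step 1 and Step 3 are routine.
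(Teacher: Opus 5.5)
Your proposal is correct and is the paper's argument: a basis of arcs joining distinct boundary components, each arc fixed by the hypothesis, a simultaneous isotopy, then the Alexander trick. You are also right that $|\partial\Sigma|\geq 2$ is tacitly required, since for one boundary component the hypothesis is vacuous.

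The paper only asserts that such a basis exists, and your recipe for it in Step 2 is too narrow. Sliding the standard handle arcs only alongside the single arc from $\partial_1\Sigma$ to $\partial_2\Sigma$ moves a cyclically contiguous block of endpoints off $\partial_1\Sigma$. For $g\geq 3$, $m=2$, no such block contains exactly one endpoint of each handle arc. Also, the arc count alone does not show the complement is a disk, because connectivity is needed too. This is repaired by also sliding endpoints over arcs already converted, since arc slides preserve being a basis.
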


\begin{proof}
    Consider a basis consisting of arcs for $\Sigma$, every element of which has boundary components on distinct components of $\partial \Sigma$. One can isotope $\phi$ to a diffeomorphism which fixes this basis of arcs. The result now follows from an application of Alexander's trick.
\end{proof}

We will also use the following Lemma:

\begin{lemma}\label{lem:arcs}
Let $\Sigma$ be a surface with non-empty boundary and suppose $\phi:\Sigma\to\Sigma$ is not the identity homeomorphism. Then there is a basis of arcs for $\Sigma$ such that for every boundary component of $\Sigma$, $\partial_i\Sigma$, at least one of the arcs with a boundary component on $\partial_i\Sigma$ is not isotopic rel boundary to its image under $\phi$.
\end{lemma}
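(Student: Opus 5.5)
We argue by building the basis directly, starting from the fact that $\phi$ is not isotopic to the identity rel boundary. Read this way, Lemma~\ref{lem:nonidentityinterestingbasis} says that for some boundary component there is an arc $a$, joining distinct boundary components, which $\phi$ moves. We then want to spread this ``moved'' property to every boundary component.

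\textbf{Step 1: every boundary component carries a moved arc.} Fix a boundary component $\partial_i\Sigma$. We claim some properly embedded arc with an endpoint on $\partial_i\Sigma$ is not isotopic rel boundary to its image. Suppose instead that every arc with an endpoint on $\partial_i\Sigma$ is fixed up to isotopy. Take any arc $b$ that $\phi$ moves, say with endpoints on components $\partial_j\Sigma$ and $\partial_k\Sigma$. Band $b$ to a small arc running around $\partial_i\Sigma$; equivalently, form arcs $b'$ and $b''$ from $\partial_i\Sigma$ to $\partial_j\Sigma$ and from $\partial_i\Sigma$ to $\partial_k\Sigma$ whose union with a boundary-parallel arc recovers $b$ up to isotopy. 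If $\phi$ fixes $b'$ and $b''$ rel boundary, and fixes $\partial_i\Sigma$ pointwise, then it fixes $b$. This contradicts the choice of $b$. Hence some arc $a^{(i)}$ from $\partial_i\Sigma$ is moved. We can choose each $a^{(i)}$ to have its other endpoint on a different component, or to be non-separating, as needed.

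\textbf{Step 2: assemble a basis.} We now want a single basis of arcs that contains, for each $i$, some arc at $\partial_i\Sigma$ that is moved. We use the flexibility in changing bases. Start with any basis $\{a_j\}$. If for some $i$ every basis arc meeting $\partial_i\Sigma$ is fixed, modify the basis by arc slides (handle slides of arcs along $\partial\Sigma$) over arcs that are moved. The key observation is this: if $a$ is fixed and $c$ is moved, and $c$ has an endpoint adjacent along the boundary, then the slide of $a$ over $c$ is moved. If it were fixed, then since $a$ is fixed, $c$ would be recovered as a band sum of fixed arcs, and so would be fixed.

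Each slide changes only arcs meeting the targeted component. We also make sure it does not destroy a moved arc at the other components: slide a fixed arc, never a moved one, so that moved arcs stay in the basis. Since every basis arc meeting some component is moved or fixed, we iterate over the components one at a time. This process terminates because the number of components lacking a moved arc strictly decreases. Connectivity of $\Sigma$, together with Step 1, guarantees that a moved arc is reachable by sliding along the boundary.

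\textbf{Main obstacle.} The delicate point is the band-sum argument in Steps 1 and 2: showing that if two arcs are fixed rel boundary, then so is their band sum along the boundary. This requires isotoping $\phi$ to fix the union of the two arcs, which we can do because fixed disjoint arcs can be simultaneously isotoped to be fixed. That simultaneous isotopy is the standard input behind the Alexander trick used in Lemma~\ref{lem:nonidentityinterestingbasis}. We then need to check that the slides keep the family a basis, meaning the complement is still a disk; this holds since arc slides preserve that property.
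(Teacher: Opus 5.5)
Your Step 1 and the ``key observation'' in Step 2 are correct: sliding a fixed arc over a moved arc does produce a moved arc, because the three arcs cobound a hexagon and each is a band sum of the other two. The band-sum argument, which you flag as the main obstacle, is actually routine. The genuine gap is the sentence claiming that connectivity together with Step 1 guarantees a suitable slide exists. That claim carries the whole content of the lemma, and you assert it rather than prove it.

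For the number of bad components to drop, you must slide a fixed arc $a$ meeting $\partial_i\Sigma$ at its endpoint \emph{off} $\partial_i\Sigma$; otherwise the new moved arc need not meet $\partial_i\Sigma$. That endpoint must also be adjacent along $\partial\Sigma$ to an endpoint of a moved basis arc. Nothing in your argument supplies this. Step 1 cannot supply it either, because the moved arc it produces at $\partial_i\Sigma$ has no relation to the current basis.

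Here is a concrete example. Take a four-holed sphere with boundary components $\partial_0\Sigma,\dots,\partial_3\Sigma$, let $\phi=\Delta_0$, and use the basis $a\colon\partial_1\Sigma\to\partial_2\Sigma$, $b\colon\partial_1\Sigma\to\partial_3\Sigma$, $c\colon\partial_3\Sigma\to\partial_0\Sigma$. Only $c$ is moved.
\begin{itemize}
\item If you treat $\partial_2\Sigma$ first, its only arc is $a$. The far endpoint of $a$ lies on $\partial_1\Sigma$, and its only neighbour there is an endpoint of the fixed arc $b$. So no slide over a moved arc is available.
\item For $\partial_1\Sigma$, the arc $a$ is useless. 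Its endpoint on $\partial_2\Sigma$ is the only marked point on that circle, so it cannot be slid at all.
\end{itemize}
In this example you can succeed by choosing $b$, or by first sliding $a$ over $b$. However, your proof gives no rule for making such choices and no argument that a successful choice always exists.

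To close the gap along your lines, start from the following observation. Cut $\Sigma$ along all basis arcs other than $a$; the result is an annulus. Repeatedly sliding the far endpoint of $a$ moves it around one boundary circle of this annulus, and fixed-over-fixed slides keep $a$ fixed. So the procedure succeeds for $a$ exactly when that circle carries a copy of a moved arc. You would then have to prove that some fixed arc at $\partial_i\Sigma$ has this property, or can be made to have it.

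The paper avoids slides altogether and builds the basis inductively. At each stage it uses the change of coordinates principle to pick a full basis of arcs, each touching the new component and all disjoint from the arcs already chosen. Applying Alexander's trick to that basis shows that one of its arcs is moved by $\phi$.
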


Note of course that any basis of arcs for a surface $\Sigma$ contains an arc with a boundary component on $\partial_i\Sigma$, for every $i$.

\begin{proof}

  We construct such a basis of arcs $\mathcal{B}$ recursively. To that end, order the boundary components of $\Sigma$, $(\partial_i\Sigma)$. Pick a basis of arcs $\mathcal{A}_1$ for $\Sigma$, each element of which has exactly one boundary component on $\partial_1\Sigma$ (or, in the special case that $\Sigma$ has only one boundary component, pick any basis of arcs). Observe that if all of these arcs are fixed up to isotopy by $\phi$ then $\phi$ is isotopic to the identity, by Alexander's trick. Thus at least one arc is not fixed up to isotopy, by assumption. Add such an arc to $\mathcal{B}$.

    We now proceed recursively through the remaining boundary components which do not intersect any of the arcs already added to $\mathcal{B}$. We proceed as follows: fix such a boundary component $\partial_j\Sigma$. By the change of coordinates principle (see~\cite[Chapter 1.3]{farb_primer_2012}), there is a diffeomorphism $\psi$ of $\Sigma$ fixing $\partial \Sigma$ point-wise that sends the arcs in $\mathcal{B}$ to a standard set of arcs $B$ --- as shown in red in the upper half of Figure~\ref{fig:adaptedheegaarddiagram} in the genus 2, two boundary component case, for example --- that go between the boundary components in the most straightforward manner possible. Now pick a basis of arcs $\mathcal{A}_{j}$ for $\Sigma$, each element of which has exactly one boundary component on $\partial_j\Sigma$ such that no arc in $\mathcal{A}_{j}$ intersects any of the arcs in $B$. This is possible to do explicitly since $B$ is standard.  Observe that since $\phi$ is not the identity, neither is $\psi \phi \psi^{-1}$, and hence there must be an arc $a$ in $\mathcal{A}_{j}$  that is not fixed by $\psi \phi \psi^{-1}$ up to isotopy. Since
    $$(\psi \phi \psi^{-1})(a) \neq a,$$
    we have that
    $$(\phi)(\psi^{-1}(a)) \neq \psi^{-1}a$$
    so $\phi$ does not fix $\psi^{-1}\alpha$ up to isotopy. Add $\psi^{-1}a$ to $\mathcal{B}$, which takes care of $\partial_j \Sigma$. For all other boundary components proceed similarly.\end{proof}

    With this lemma at hand we can give the following general lower bound on the rank of link Floer homology:

\begin{lemma}\label{lem:atleasthalf}

 Suppose that $\Sigma$ is a surface with $m$ boundary components that isn't a disk. Let $\phi:\Sigma\to\Sigma$ be a non-identity diffeomorphism which fixes $\partial\Sigma$ pointwise. Then:

    \begin{align*}
    \rank(\widehat{\HFL}(\partial\Sigma,[\Sigma],1-G))\geq \Big\lceil\frac{m}{2}\Big\rceil.\end{align*}
\end{lemma}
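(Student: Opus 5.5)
The plan is to produce at least $\lceil m/2\rceil$ classes in $\widehat{\HFL}(-Y,-L_\phi,-[\Sigma],1-G)$ and show they are linearly independent. Independence will be detected by the triangle maps $f_1^*$ of the exact triangle (\ref{eq:exactneg}), one map for each boundary component we use. By (\ref{eq:invariance}) and (\ref{eq:inverse}) it suffices to work in this mirrored setting, and we may replace $\phi$ by $\phi^{-1}$ whenever convenient.

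\textbf{The case $m=1$.} Here $\Sigma$ is not a disk, so $g(\Sigma)\geq 1$. Then $\rank\geq 1=\lceil 1/2\rceil$ by~\cite[Theorem 1.1]{baldwin_note_2018}, exactly as in the proof of Proposition~\ref{prop:HFKsingleboundarycomponent}. Henceforth assume $m\geq 2$.

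\textbf{Choosing the arcs and the direction.} Since $\phi$ is not the identity, Lemma~\ref{lem:arcs} gives a basis of arcs $\mathcal{B}$ with the following property: for every $i$ there is an arc $a^{(i)}\in\mathcal{B}$ with an endpoint on $\partial_i\Sigma$ such that $a^{(i)}$ is not isotopic to $\phi(a^{(i)})$. Such an arc is sent strictly left or strictly right at that endpoint. Each boundary component is therefore labelled ``left'' or ``right''. Replacing $\phi$ by $\phi^{-1}$ if necessary (this swaps the labels and preserves the rank in grading $1-G$ by (\ref{eq:inverse}) and (\ref{eq:invariance})), at least $\lceil m/2\rceil$ components, say $\partial_{i_1}\Sigma,\dots,\partial_{i_k}\Sigma$, are labelled ``left''.

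\textbf{The classes and their images.} For each such $i=i_s$, relabel so that $\partial_i\Sigma$ plays the role of $\partial_1\Sigma$. Then Lemma~\ref{lem:f1nontrivial} gives a cycle $\bm{u}^{(i)}_+$ and a triangle map $f_1^{(i)}$, for $0$-surgery on the push-off $\eta_i$ of $\partial_i\Sigma$, with $f_1^{(i)}([\bm{u}^{(i)}_+])=[\bm{v}^{(i)}_+]\neq 0$ by Lemma~\ref{lem:0surgery}. Linear independence of $[\bm{u}^{(i_1)}_+],\dots,[\bm{u}^{(i_k)}_+]$ then follows from a triangularity statement: $f_1^{(i)}([\bm{u}^{(j)}_+])=0$ for $j\neq i$, or at least the resulting $k\times k$ matrix over $\F$ is upper triangular with ones on the diagonal for a suitable ordering. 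To get this, I would argue that every generator $\bm{u}^{(j)}_+$ with $j\neq i$ contains the BRAID-type points $c_*$, $\widetilde{c}_*$ near $\partial_i\Sigma$ rather than a point $d_1$ in $-\Sigma$ on the arc at $\partial_i\Sigma$. By the local argument of Figure~\ref{fig:standardtraingles}, the shadow of any triangle from $\bm{u}^{(j)}_+$ is then forced near $\partial_i\Sigma$. Combined with the Alexander grading count from~\cite[Lemma 4.2]{tovstopyat2018transverse}, the only possible output would place $x$ together with a $c$-type point where $\bm{v}^{(i)}_\pm$ requires $d$-type data, which would give a triangle with a negative multiplicity, a contradiction.

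\textbf{Main obstacle.} The hardest step is making all of this happen in one Heegaard diagram. That diagram must be adapted to $L$ and built from the single basis $\mathcal{B}$. Lemma~\ref{lem:f1nontrivial} assumed that $\partial_1\Sigma$ meets exactly one arc and that $\partial_2\Sigma$ carries no necklace curves, so these normalizations have to be relaxed, or arranged simultaneously for all $i_s$ (for instance by choosing the necklace-free component outside $\{i_1,\dots,i_k\}$ when possible). I would first try to rerun the multiplicity argument of Lemma~\ref{lem:f1nontrivial} with several arcs ending on $\partial_i\Sigma$. The extra arcs contribute only standard corners of the type in Figure~\ref{fig:standardtraingles}, so they should not affect the shadow computation. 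If this fails, the fallback is a weaker count: use the independence argument only for the components reachable under the normalization, and then check that this still yields $\lceil m/2\rceil$.
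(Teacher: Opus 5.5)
Your classes are the right ones, and your $m=1$ case is fine. Since $u_+=\widetilde{c}_1$ is the point under $z_1$, your $\bm{u}^{(i)}_+$ is the BRAID generator $\bm{c}$ with $c_i$ replaced by the left-veering point $d_i$, i.e.\ the paper's $\bm{d}_i$.

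The gap is in the maps you use to detect independence. The triangle map $f_1^{(i)}$ of (\ref{eq:exactneg}), and Lemmas~\ref{lem:0surgery} and~\ref{lem:f1nontrivial} about it, only exist for a diagram normalized at $\partial_i\Sigma$. There, $a^{(i)}$ must be the unique basis arc meeting $\partial_i\Sigma$, meeting it once, so that its $\beta$-curve is a meridian of $\eta_i$ and can be traded for the framing curve $\gamma_i$. The proofs also use that the other end of $a^{(i)}$ lies on the necklace-free component (e.g.\ the regions containing $z_2$ in Figure~\ref{fig:uniquetriangle}).

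The basis from Lemma~\ref{lem:arcs} does not have this property at all $i_s$ simultaneously. If you use a different diagram for each $i$, the classes $[\bm{u}^{(j)}_+]$, $j\neq i$, live in different complexes and would have to be transported through handleslide maps you have no control over. You flag this as the main obstacle but do not resolve it. Your fallback is also unjustified: the set of components you can normalize and the set labelled ``left'' both depend on the basis, and nothing forces their intersection to have $\lceil m/2\rceil$ elements.

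The triangularity claim is not established either. $\bm{v}^{(i)}_\pm$ has $c_j'$ on $\alpha_j$; there is no ``$d$-type data'' in it. So a triangle from $\bm{u}^{(j)}_+$ must move $d_j$ to $c_j'$ and $c_i\in\Sigma$ to $x\in-\Sigma$. Ruling these out needs basepoint arguments, since the obvious domain for the first move is the $d_j$-to-$c_j$ bigon through $w_j$. The analysis of Lemma~\ref{lem:f1nontrivial} starts from $d_1$ rather than $c_1$, so it does not cover this.

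The paper avoids surgery entirely. It works in the single diagram $\mathcal{H}(-L)$ built from the basis of Lemma~\ref{lem:arcs}. For each $i$ it uses the spectral sequence obtained by allowing disks to cross $w_i$, which is filtered because the meridian of $\partial_i\Sigma$ has intersection number one with $\Sigma$. Its $E_2$-differential is a map $\partial_i^*:\widehat{\HFL}(-L,[-\Sigma],1-G)\to\widehat{\HFL}(-L,[-\Sigma],-G)$, and the target is spanned by the nonzero BRAID class $[\bm{c}]$ by~\cite{tovstopyat2018transverse}.

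The bigon cobounded by $\alpha_j$ and $\beta_j$ from $d_j$ to $c_j$ through $w_j$ exists exactly because $a_j$ is sent left at $\partial_j\Sigma$. It gives $\partial_j^*[\bm{d}_j]=[\bm{c}]$, while $\partial_k^*[\bm{d}_j]=0$ for $k\neq j$. The resulting matrix is the identity, all maps live on one chain complex, and no per-component normalization is needed. Substituting these $\partial_i^*$ for your $f_1^{(i)}$ repairs your argument.
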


This is a generalization of~\cite[Theorem 1.1]{baldwin_note_2018}. The proof is inspired by a proof of a similar result due to the first author and Dey under slightly stronger hypotheses~\cite[Theorem 4.1]{binns2022rank}. Here, unlike in~\cite[Theorem 4.1]{binns2022rank}, we do not require that the manifold with open book decomposition $(\Sigma,\phi)$ be a rational homology sphere.

\begin{proof}[Proof of Lemma~\ref{lem:atleasthalf}]

Let $\phi,\Sigma, m$ be as in the statement of the theorem and set $L:=\partial \Sigma$. By symmetry properties of link Floer homology, it suffices to show that $\widehat{\HFL}(-{L},[-\Sigma],1-G)\geq\Big\lceil\frac{m}{2}\Big\rceil$.

   Pick a basis of arcs as in the statement of Lemma~\ref{lem:arcs}. Let $L_i$ denote the $i$th component of $L:=\partial\Sigma$. Let $\{a_j\}$ denote a collection of arcs from that basis, at most one per component of $L$, with a single endpoint on $L_j$ that are sent  (strongly) to the left at that point. Up to reversing orientation we may assume that $|\{a_j\}|\geq \lceil\frac{m}{2}\rceil$. Obtain a Heegaard diagram for $-L$ as in Section~\ref{sec:Heegaard}. 

    Recall that for each $i$ there is a spectral sequence from $\widehat{\CFL}(-{L})$ to $\widehat{\HFL}(-{L\setminus -L_i})\otimes V$ with $E_2$ page $\widehat{\HFL}(-{L})$ where $V$ is a vector space of rank $2$ supported in Alexander grading zero~\cite{HolomorphicdiskslinkinvariantsandthemultivariableAlexanderpolynomial}. Note here we are using the fact that $[\mu_i]\neq 0\in H_1(Y\setminus\nu(L),\partial(Y\setminus\nu(L));\Z)$ (since it has algebraic intersection number one with the embedded surface $\Sigma$) to guarantee that  the differential counting pseudo-holomorphic disks that are allowed to cross basepoints is filtered. Here $\mu_i$ is a meridian of $\partial_i\Sigma$. The induced differential on the $E_2$-page is a map $$\partial_i^*:\widehat{\HFL}(-{L},[-\Sigma],1-G)\to\widehat{\HFL}(-{L},[-\Sigma],-G).$$ Observe that the BRAID invariant, $\bm{c}$, of (an appropriate transverse approximation) of $L$ is non-zero and generates $\widehat{\HFL}(-{L},[-\Sigma],-G)$ by~\cite{tovstopyat2018transverse}. Consider the generators $\bm{d}_j$ that are given by taking the BRAID invariant and replacing the intersection point $c_j$ on the $\alpha$-curve obtained from $a_j$ --- the (strongly) left veering arc through $\partial_j\Sigma$ --- with $d_j$ the intersection point as shown in Figure~\ref{fig:uniquetriangle} in the $j=1$ case. Note that since we have chosen basis arcs that are strongly sent to the left for each of the $|\{a_j\}|$ components, and by the convenient positioning of the basepoints near the $c_k$ intersection points with $k\neq j$, and $\widetilde{c}_k$ generators for $k\neq 2$, we have that $\bm{d}_j$ represents a class in $\widehat{\HFL}(-{L},[-\Sigma],1-G)$. We also have that $\partial_j^*(\bm{d}_j)=\bm{c}$. This follows from the positioning of the basepoints near the $c_k$ and $\widetilde{c}_k$ intersection points as above, together with the fact that there is a bigon contributing to $\partial^*_j$ bounded by $\alpha_j$ and $\beta_j$, and containing $w_j$, coming from the fact that $a_j$ is sent weakly to the left at its endpoint on $\partial_j\Sigma$.
    Likewise, $\partial_k^*(\bm{d}_j)=0$ for $k\neq j$, since there is no similar such bigon. It follows that the generators $\bm{d}_j$ are linearly independent and non-trivial, so in turn that $\rank(\widehat{\HFL}(-{L},[\Sigma],1-G))\geq \lceil\frac{m}{2}\rceil$, as desired.\end{proof}

    We have an even stronger result in the identity case:

\begin{lemma}\label{lem:identitycase}
     Suppose $\Sigma$ is a surface with $m$ boundary components and let $\mathbbm{1}:\Sigma\to\Sigma$ be the identity. Then:
    \begin{align*}
    \rank(\widehat{\HFL}(\partial\Sigma,[\Sigma],1-G))\geq 2(g(\Sigma)+|\partial\Sigma|-1).\end{align*}

\end{lemma}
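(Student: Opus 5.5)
The plan is to decompose the binding of the identity open book into simple pieces whose link Floer homology is known, and then to assemble the next-to-top ranks using the K\"unneth formula~\cite[Theorem 11.1]{HolomorphicdiskslinkinvariantsandthemultivariableAlexanderpolynomial}. Throughout, write $N:=2g(\Sigma)+|\partial\Sigma|-1$, so the claimed bound is $2N$. Note first that $\Sigma$ is a boundary-connected sum of $g$ one-holed tori $T_1,\dots,T_g$ and $m-1$ annuli $A_1,\dots,A_{m-1}$, glued along arcs in their boundaries. Under this decomposition the identity restricts to the identity on each piece.

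\textbf{Step 1: decompose the open book.} The open book $(\Sigma,\mathbbm{1})$ is a Murasugi sum of the open books $(T_k,\mathbbm{1})$ and $(A_l,\mathbbm{1})$. For the identity monodromy, the pages are just glued along a square, and the ambient manifold is $\#^{N}S^1\times S^2$. I would like to show that, because the monodromy is trivial, this Murasugi sum is in fact realised by a connected sum of pairs. Concretely, a plumbing disc for the identity open book is swept out by the trivial $S^1$-family. Cutting along the resulting sphere meeting the binding in two points should separate the pieces. I would verify this directly from the Heegaard diagram of Definition~\ref{def:HDadaptredtoL}: the $\beta$-curves are small perturbations of the $\alpha$-curves, and the diagram visibly splits as a connected sum along a neck near a basepoint. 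Granting Step 1, $(Y_{\mathbbm{1}},\partial\Sigma)$ is a connected sum of $g$ copies of the Borromean knot $B\subset\#^2S^1\times S^2$ (the binding of $(T,\mathbbm{1})$) and $m-1$ copies of the link $H\subset S^1\times S^2$ formed by two $S^1$-fibres (the binding of $(A,\mathbbm{1})$). Each connected sum is taken along components with matching Seifert surfaces, so the Alexander gradings with respect to $[\Sigma]$ add.

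\textbf{Step 2: compute the pieces.} For $B$, the knot Floer homology has ranks $1,2,1$ in Alexander gradings $1,0,-1$, so its next-to-top rank is $2$. For $H$, I would use the explicit annulus computation referred to in the introduction (Proposition~\ref{prop:annuluscase} and Equation~\ref{eq:annulus2}), or compute it directly from a genus-one diagram, to check that $\widehat{\HFL}(H,[A],1-G_A)$ has rank $2$ and that the top group has rank $1$. In both cases the top grading has rank $1$, since the links are fibered.

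\textbf{Step 3: assemble.} By the K\"unneth formula, $\widehat{\HFL}$ of the connected sum is the tensor product of the factors, with Alexander gradings adding. Since every factor has rank $1$ in its top (equivalently, by symmetry, bottom) grading, the next-to-extremal summand of the product is the direct sum over factors of that factor's next-to-extremal group, tensored with the rank-one extremal groups of all the other factors. This gives rank $2g+2(m-1)=2N$ in grading $1-G$, as required; by Equation~\eqref{eq:invariance}, top versus bottom does not matter.

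\textbf{Main obstacle.} I expect Step 1 to be the main obstacle, namely justifying that the identity Murasugi sum is genuinely a connected sum of links, including the bookkeeping of which component each summand attaches to. If that proves awkward, the fallback is a direct count in the adapted diagram. For the identity, each $\alpha_i\cap\beta_i$ consists of $c_i\in\Sigma$ and $d_i\in-\Sigma$, and the generators in grading $1-G$ have exactly one point in $-\Sigma$. These generators are $\bm{d}_i$ (obtained by replacing $c_i$ with $d_i$) together with the generators obtained by swapping $c_i$ with $d_i$ while simultaneously changing a necklace point. I would show that the differential on this grading vanishes, since every bigon between $\alpha_i$ and $\beta_i$ contains a basepoint. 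Showing that there are exactly $2N$ such generators would then give the bound directly.
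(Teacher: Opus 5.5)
Your main route (Steps 1--3) is correct, and it differs genuinely from the paper's. The paper never identifies the link globally. It works in the adapted diagram of Figure~\ref{fig:identity}, uses that this diagram is nice in the sense of Sarkar--Wang, and checks that the $N=2g+m-1$ generators $\bm{d}_i$ (replace $c_i$ by $d_i$) and the $m-1$ generators $\widetilde{\bm{d}}_j$ (replace $\widetilde{c}_j$ by $\widetilde{d}_j$) have no outgoing or incoming bigons or rectangles. They therefore span a direct-summand subcomplex with zero differential, giving rank at least $N+(m-1)=2(g+m-1)$. The payoff is explicit generators living in the same diagrams used in Lemma~\ref{lem:atleasthalf} and throughout Section~\ref{sec:HFL}. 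Your argument instead uses that the binding is a connected sum of $g$ Borromean knots and $m-1$ copies of $H$. With the known groups of the pieces, K\"unneth gives Poincar\'e polynomial $(t+2+t^{-1})^{g+m-1}$. That is an exact computation showing the bound is sharp, which the paper only observes for the annulus.

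There are three things to fix.

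\textbf{The target.} The bound is $2(g+m-1)$, not $2N$. Your count $2g+2(m-1)$ is the right number, but $2g+2(m-1)\neq 2N=4g+2m-2$ once $g\geq 1$. In fact $2N$ is false: for $g=m=1$ the Borromean knot has rank $2$, not $4$, in grading $0$. This breaks your fallback as written. There the correct count is the $N$ generators $\bm{d}_i$ plus $m-1$ necklace generators obtained by changing \emph{only} the necklace point. Changing both $c_i$ and a necklace point puts two points in $-\Sigma$, which is the wrong Alexander grading. Also, checking that $\alpha_i$--$\beta_i$ bigons contain basepoints is not enough. You must exclude all positive domains into and out of these generators, which is exactly what niceness buys the paper.

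\textbf{Circularity.} Do not cite Proposition~\ref{prop:annuluscase} for $H$: its $n=0$ case is deduced from this very lemma. A non-circular argument covering both pieces runs as follows. $B$ and $H$ are fibered with $G=1$, so $\widehat{\HFL}$ is supported in gradings $-1,0,1$ with rank one at $\pm 1$. The spectral sequence to $\widehat{\HF}(Y)\otimes V^{|L|-1}$ has a target of rank $4$ in both cases, which forces rank at least $2$ in grading $0$. That lower bound is all Step 3 needs.

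\textbf{Step 1.} This is not really an obstacle. Gluing pages along arcs in their boundaries is a Murasugi sum along a $2$-gon, i.e.\ a boundary connected sum of fibres. It realises the connected sum of the fibered links for arbitrary monodromies, with components matched as you describe. Avoid calling it gluing along a square: that is a $4$-gon sum (plumbing), which is not a connected sum.
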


\begin{proof}

We continue with the same framework and notation as in the proof of Lemma~\ref{lem:atleasthalf}. In particular we consider the Heegaard diagram $\mathcal{H}$ shown in Figure~\ref{fig:identity}.  We claim that the generators $\bm{d}_i$ (and $\widetilde{\bm{d}}_i$) consisting of the intersection point $d_i$ (respectively $\widetilde{d}_i$), along with the remaining $c_j$ and $\widetilde{c}_j$ generators form a summand of the Heegaard Floer chain complex of rank $2g(\Sigma)+|\partial\Sigma|-1$ with vanishing differential. 
To see that the summand is of rank $2g(\Sigma)+|\partial\Sigma|-1$ note that there are $2g(\Sigma)+|\partial\Sigma|-1$ generators $\bm{d}_i$, and $|\partial\Sigma|-1$ generators $\widetilde{\bm{d}}_i$
The fact that the differential is trivial can be seen by noting that the Heegaard diagram is \emph{nice} in the sense of~\cite{SarkarWang2010}, so that the differential only counts bigons and rectangles. We claim that there are no such domains starting or ending at the two generators, from which the result follows. For the claim observe that the positioning of the basepoints near the $c_i$ imply that any disk contributing to $\partial \bm{d_i}$ fixes each $c_j$ intersection point. It can then be readily checked that $\partial\bm{d}_i=0$ from the positioning of the basepoints near the $d_i$ intersection points.

Suppose now that there is a generator $\bm{x}$ and a disk $\phi\in \pi_2(\bm{x},\bm{d}_i)$. Observe that $d_i\in\bm{x}$ by the positioning of the basepoints near $d_i$. It is also clear that any such $\phi$ cannot have a bigon domain. To rule out the case of rectangular domains, $\mathcal{D}$, consider the arc component, $b$, of $\partial\mathcal{D}$ that approaches some $c_j\in\bm{d}_i$ along a $\beta$-curve (an identical argument can be used to rule out the $\widetilde{c}_j$ cases). Suppose $b$ starts at some point $x\in\bm{x}$. Observe that $x\in \Sigma$ (the lower half of the Heegaard diagram as shown in~\ref{fig:identity}) in order for $\bm{x}$ to be of the correct Alexander grading. Indeed, one can see from the positioning of the basepoints near the $d_k$ and $\widetilde{d_k}$ intersection points that $b$ is entirely contained in $\Sigma$. Since the angle at $x$ is required to be acute the arc component of $\partial\mathcal{D}$ along an $\alpha$-curve, that terminates at $x$ must come from the direction of $-\Sigma$, as opposed to from the direction of the $c_l$ (or $\widetilde{c_l}$) intersection point that the $\alpha$ curve contains. Moreover, this component must be entirely supported in $\Sigma$ for the same reason as argued earlier, namely the multiplicities of the domains near the $d_k$ and $\widetilde{d_k}$ intersection points. This contradicts the requirement that this arc begins at an element $c_l$ (or $\widetilde{c_l}$), concluding the proof.
\end{proof}

    \begin{figure}[ht]
\centering

   \begin{tikzpicture}


\draw[ thick]
(-1,-3.5)
.. controls (3,-3.5) and (3,3.5) ..
(-1,3.5)
.. controls (-7.5,3.5) and (-7.5,-3.5) ..
(-1,-3.5);

\draw[red, thick] (0,0) ellipse (1 and 3.0);

\draw[red, thick] (-3,0) ellipse (1.8 and 1.0);

\draw[ draw=black, thick] (0,2) ellipse (0.5 and 0.3);
\draw[draw=black, thick] (0,-2) ellipse (0.5 and 0.3);
\draw[draw=black, thick] (-3.25,0) ellipse (0.3 and 0.5);

\node at (-4,0) {$w_1$};
\node at (-5.1,0) {$z_1$};
\node at (-1.5,0) {$w_2$};
\node at (-2.5,0) {$z_2$};

\draw[blue, thick] (-4.6,0) ellipse (0.8 and 0.5);

\draw[blue, thick]
(0.2,1.75)
.. controls  (0.2,1.0) and (-2,1)  ..
 (-2,0)
.. controls (-2,-1) and (0.2,-1.0) ..
(0.2,-1.75);

\draw[blue, dashed, thick]
(0.2,-1.75) .. controls (0.2,-1.5) and (1.0,-1.4) ..
(1.7,-1.5);

\draw[blue, dashed, thick]
(0.2,1.75) .. controls (0.2,1.5) and (1.0,1.4) ..
(1.7,1.5);

\draw[blue, thick]
(1.7,-1.5) .. controls (0,-1) and (-1.8,-0.5) ..
(-1.8,0) .. controls (-1.8,0.5)  and  (0,1)..
(1.7,1.5)
;

\draw[blue, thick]
(-5.7,0) .. controls (-5.7,-2) and (-2.3,-2) ..
(-2.3,0) .. controls (-2.3,2) and (-5.7,2)  ..
(-5.7,0)

;


\draw[red, thick]
(-3.5,0.2) .. controls (-4.5,1) and (-4.5,-1) ..
(-3.5,-0.2);

\draw[red, thick,dashed]
(-3.5,0.2) .. controls (-4,1) and (-5,1) ..
(-5.9,0.2);

\draw[red, thick,dashed]
(-3.5,-0.2) .. controls (-4,-1) and (-5,-1) ..
(-5.9,-0.2);

\draw[red, thick]
(-5.9,0.2) .. controls (-4.6,1) and (-4.6,-1) ..
(-5.9,-0.2);


\draw[blue, thick]
(-2.2,0)
.. controls (-2.2,-1.5) and (-0.5,-2.5) ..
(0,-2.5)
.. controls (0.1,-2.5) and (0.5,-2.5) ..
(0.6,-2)
.. controls (0.6,-0.7) and (-1.9,-1) .. (-1.9,0)

.. controls  (-1.9,1)  and (0.6,0.7).. (0.6,2)
.. controls  (0.5,2.5)  and (0.1,2.5).. (0,2.5)

.. controls (-0.5,2.5) and  (-2.2,1.5) .. (-2.2,0)
;

\draw[red, thick, dashed]
(0,1.7)
.. controls (0.25,0.5) and (0.25,-0.5)..
(0.0,-1.7);
\draw[red, thick]
(0,1.7)
.. controls (-0.25,0.5) and (-0.25,-0.5)..
(0.0,-1.7);

\node at (-2.5,1.3) {$d_1$};
\filldraw[black] (-2.7,1) circle (2pt);
\node at (-2.5,-1.3) {$c_1$};
\filldraw[black] (-2.7,-1) circle (2pt);

\node at (-0.9,2.6) {$d_2$};
\filldraw[black] (-0.65,2.3) circle (2pt);
\node at (-0.9,-2.6) {$c_2$};
\filldraw[black] (-0.65,-2.3) circle (2pt);

\node at (-3.8,0.6) {$\widetilde{d}_1$};
\filldraw[black] (-4.1,0.4) circle (2pt);

\node at (-3.8,-0.6) {$\widetilde{c}_1$};
\filldraw[black] (-4.1,-0.4) circle (2pt);

\node at (-0.35,1.5) {${d}_3$};
\filldraw[black] (-0.1,1.35) circle (2pt);
\node at (-0.35,-1.5) {${c}_3$};
\filldraw[black] (-0.1,-1.35) circle (2pt);


\node at (-4,-1.8) {$\beta_1$};
\node at (-1.6,-1.8) {$\beta_2$};
\node at (-0.6,-3) {$\alpha_2$};
\node at (-3.6,-1.1) {$\alpha_1$};

\end{tikzpicture}
 
    \caption{A Heegaard diagram $\mathcal{H}$ adapted to the identity diffeomorphism on surfaces with two boundary components in the sense of Section~\ref{sec:braidinvt}. $d_i$ are intersection points as shown. }\label{fig:identity}
   \end{figure}

We are now ready to prove our main theorem concerning link Floer homology, which we restate here for the reader's convenience.

\corHFKrank*

Here $G:=1-g(\Sigma)-|\partial\Sigma|$, the minimum grading in which $\widehat{\HFL}$ is non-trivial.

\begin{proof}
    Observe that $\phi$ can be written as $\prod_{1\leq i\leq m}\Delta_i^{k_i}\phi'$ where $\Delta_i$ is a full right handed Dehn twist about the $i$th boundary component of $\Sigma$, $k_i$ is given by $\big\lfloor\FDTC(\phi,\partial_i\Sigma)\big\rfloor$ if $\FDTC(\phi,\partial_i\Sigma)\geq0$ or $\Big\lceil\FDTC(\phi,\partial_i\Sigma)\Big\rceil$ if $\FDTC(\phi,\partial_i\Sigma)\leq0$. Then $0\leq\FDTC(\phi',\partial_i\Sigma)<1$ for all $i$ such that $\FDTC(\phi,\partial_{i}\Sigma) \geq 0$ and $-1 < \FDTC(\phi',\partial_{i}\Sigma) \leq 0$ for all $i$ such that $\FDTC(\phi,\partial_{i}\Sigma) \leq 0$.
    
The proof is an iterative process. Begin with $\phi$ and the first boundary component. If $\FDTC(\phi,\partial_1\Sigma)\leq0$, apply at least $(|k_1|-1)$ iterated applications of Proposition~\ref{thm:HFKaddtwists} to $\phi$, and then at most one application of Proposition~\ref{thm:HFKaddtwists1}. If $\FDTC(\phi,\partial_i\Sigma)\geq0$, instead apply these Propositions to $\phi^{-1}$. Continue this process until, by at least $\sum_{i=1}^m(|k_i|-1)$ iterated applications of Proposition~\ref{thm:HFKaddtwists}, at most one application of Proposition~\ref{thm:HFKaddtwists1} per boundary component, and an appropriate number of applications of Equation~\ref{eq:invariance}, you have arrived at $\phi'$. Let $L'$ be the boundary of the open book $\phi':\Sigma\to \Sigma$.  Now observe that $\rank(\widehat{\HFL}(L',[\Sigma],G-1))\geq \Big\lceil\frac{m}{2}\Big\rceil$ by Proposition~\ref{lem:atleasthalf} when $\phi'$ is not the identity and Lemma~\ref{lem:identitycase} in the case that $\phi'$ is the identity, giving the desired result. 
\end{proof}

We can give the following improvement of Theorem~\ref{corHFKrank} in the special case that $\phi$ is a product of boundary Dehn twists.

\begin{lemma}\label{prop:HFKintegerFDTC}
Let $\Sigma$ be a surface with $m$ boundary components that is neither a disk nor an annulus. If $\bm{\Delta}:=\prod\Delta_i^{n_i}$ where $\Delta_i$ is a boundary Dehn twist about the $i$th boundary component of $\Sigma$. Then:  \begin{align*}
    \rank\big(\widehat{\HFL}(\partial\Sigma, {[\Sigma]},G-1)\big)\geq 2(g(\Sigma)+|\partial\Sigma|-1)+\underset{1\leq i\leq m}{\sum}2\max\{|n_i|-1,0\}.\end{align*}

\end{lemma}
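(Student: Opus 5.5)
The plan is to start from the identity and add boundary twists one component at a time, using Lemma~\ref{lem:identitycase} as the base case and Proposition~\ref{thm:HFKaddtwists} for the inductive steps. Lemma~\ref{lem:identitycase} gives
\[
\rank\big(\widehat{\HFL}(L_{\mathbbm{1}},[\Sigma],1-G)\big)\geq 2(g(\Sigma)+|\partial\Sigma|-1).
\]
By Equation~\ref{eq:invariance}, grading $1-G$ and grading $G-1$ have the same rank, so we may work in whichever is convenient. The Dehn twists $\Delta_i$ about distinct boundary components commute, so $\bm{\Delta}$ can be built by first adding all twists about $\partial_1\Sigma$, then all about $\partial_2\Sigma$, and so on. The goal is to show that each of the $|n_i|-1$ twists about $\partial_i\Sigma$ beyond the first adds a rank two summand $V$. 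The single remaining twist is handled by Proposition~\ref{thm:HFKaddtwists1}, which can only increase the rank (rather than by Proposition~\ref{thm:HFKaddtwists}), or simply ignored.

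Fix a component $\partial_i\Sigma$ and suppose first $n_i\leq 0$. Let $\psi_k:=\Delta_i^{-k}\prod_{j<i}\Delta_j^{n_j}$ for $0\le k\le |n_i|$. To pass from $\psi_{k+1}$ to $\psi_k=\psi_{k+1}\circ\Delta_i$ we apply Proposition~\ref{thm:HFKaddtwists} with $\phi=\psi_{k+1}$. This requires an arc $a$ with exactly one endpoint on $\partial_i\Sigma$ such that $\psi_{k+1}\circ\Delta_i^2=\Delta_i^{1-k}\prod_{j<i}\Delta_j^{n_j}$ sends $a$ weakly to the left at that endpoint. Since $\Sigma$ is not an annulus or disk, we can pick $a$ running from $\partial_i\Sigma$ to some other boundary component, or to $\partial_i\Sigma$ only if $m=1$; in the multi-component case choose the other endpoint on a component $\partial_l\Sigma$ with $l>i$ where possible, or else on one untouched by the twists. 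Twists about components other than $\partial_i\Sigma$ only affect $a$ near its far endpoint, so the germ at $\partial_i\Sigma$ is governed by $\Delta_i^{1-k}$. For $k\geq 1$ this is a non-positive power, which sends $a$ weakly to the left at $\partial_i\Sigma$. Hence for $k=1,\dots,|n_i|-1$,
\[
\widehat{\HFL}(L_{\psi_{k+1}})\cong\widehat{\HFL}(L_{\psi_{k}})\oplus V
\]
in grading $1-G$, which gives $2(|n_i|-1)$ additional rank. The case $n_i>0$ follows by applying the same argument to the inverse monodromy and invoking Equation~\ref{eq:inverse} together with Equation~\ref{eq:invariance}. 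Inverting the twists about the already-treated components changes nothing, since the rank statements are symmetric.

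The main obstacle is that Proposition~\ref{thm:HFKaddtwists} is stated for $|\partial\Sigma|\geq 2$, and the base step $k=0$ (from $\psi_1$ to $\psi_0$) is not covered. For $k=0$ we invoke Proposition~\ref{thm:HFKaddtwists1} instead: its hypothesis is that $\psi_1\circ\Delta_i=\prod_{j<i}\Delta_j^{n_j}$ sends $a$ weakly to the left at $\partial_i\Sigma$, which holds since the germ there is the identity. That proposition only says the rank does not decrease, which suffices. In the single-component case $m=1$ we instead use Proposition~\ref{prop:newknotFDTC<0}: $\Delta^{n}$ has $\FDTC=n$, so Equation~\ref{HFK:twists-1} applies $|n|-1$ times and Equation~\ref{HFK:twists0} once, again starting from the identity bound $2g(\Sigma)$ of Lemma~\ref{lem:identitycase}. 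The remaining care point is choosing $a$ so that, even in the multi-component case after earlier twists about other components, "weakly to the left at $\partial_i\Sigma$" depends only on the power of $\Delta_i$. This should be clear, since twists supported in collars of other components are disjoint from a collar of $\partial_i\Sigma$. Summing the contributions over all components gives the stated bound.
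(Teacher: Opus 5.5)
Your route is the paper's: base case from Lemma~\ref{lem:identitycase}, then per component $|n_i|-1$ uses of Proposition~\ref{thm:HFKaddtwists} and one of Proposition~\ref{thm:HFKaddtwists1}, and Proposition~\ref{prop:newknotFDTC<0} when $m=1$. The gap is in how you check the arc hypotheses.

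\textbf{The locality claim is false.} Whether $\psi(a)$ lies weakly to the left of $a$ at $\partial_i\Sigma$ is decided after isotoping $a$ and $\psi(a)$ into minimal position. It is therefore not determined by the germ of $\psi$ near $\partial_i\Sigma$. A positive twist about $\partial_l\Sigma$ is supported in a collar of $\partial_l\Sigma$, yet it sends an arc from $\partial_i\Sigma$ to $\partial_l\Sigma$ strictly to the right at \emph{both} endpoints, since products of positive Dehn twists are right-veering.

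For $k\geq 2$ your conclusion survives, because a nonzero power of $\Delta_i$ decides the side at $\partial_i\Sigma$ (compare lifts to the universal cover). At $k=1$, and at the $k=0$ step via Proposition~\ref{thm:HFKaddtwists1}, the map that must send $a$ weakly left is $\prod_{j<i}\Delta_j^{\pm n_j}$ itself. That holds only if the far endpoint of $a$ lies on a component that is still untwisted or has $n_l n_i>0$. So passing to the inverse is not harmless, since it flips the signs of the twists already applied. Your fallback of an untouched component also does not exist for the last component processed when all $n_j\neq 0$.

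\textbf{Where this can and cannot be repaired.} If some $n_j=0$, or all $n_j$ have the same sign, or $m\geq 3$, you can fix this. Put the far endpoint of $a$ on an untwisted component or one with the same sign as $n_i$. When $m\geq 3$ with mixed signs, process first a component whose sign occurs only once. For $m=2$ with $n_1n_2<0$, however, neither ordering nor inversion helps. Every arc with exactly one endpoint on one boundary component ends on the other, which after normalizing signs already carries positive twists.

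Take $\bm{\Delta}=\Delta_1^{2}\Delta_2^{-2}$. Tracking exactly which steps Propositions~\ref{thm:HFKaddtwists} and~\ref{thm:HFKaddtwists1} (with Equation~\ref{eq:inverse}) control, you obtain only
$\rank\widehat{\HFL}(L_{\Delta_1^{2}\Delta_2^{-2}},[\Sigma],1-G)\geq\rank\widehat{\HFL}(L_{\Delta_1\Delta_2^{-1}},[\Sigma],1-G)$.
Nothing bounds the right-hand side below in terms of the identity, so the claimed bound $2(g(\Sigma)+1)+4$ does not follow. This case needs an additional idea. The paper's own proof simply refers back to the proof of Theorem~\ref{corHFKrank} and does not check these hypotheses either.
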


\begin{proof}
   In the $m>1$ case, this follows exactly as in the proof of Theorem~\ref{corHFKrank}, except that now we can always apply Lemma~\ref{lem:identitycase} for the base case $\phi'=\mathbbm{1}_\Sigma$ --- following the notation from the proof of Theorem~\ref{corHFKrank}. The $m=1$ case follows as in the $m>1$ case except that we apply Proposition~\ref{prop:newknotFDTC<0} instead of Proposition~\ref{thm:HFKaddtwists} and Proposition~\ref{thm:HFKaddtwists1}.
\end{proof}

\begin{remark}We note in passing that if one additionally assumes that the three-manifold with open book decomposition $(\Sigma,\phi)$ in the statement of Theorem~\ref{corHFKrank} is a rational homology sphere, then the ``$\Big\lceil\frac{m}{2}\Big\rceil$" can be improved to an $m$ by applying~\cite[Theorem 4.1]{binns2022rank} as opposed to Lemma~\ref{lem:atleasthalf} in the proof.
\end{remark}

\begin{remark}\label{rmk:nociels}
    
 Suppose that for a non-identity diffeomorphism $\phi:\Sigma\to\Sigma$ there is  a basis of arcs $\{a_j\}$ with the property that for all $k$ at least one $a_j$ abutting to $\partial\Sigma_k$ is sent strictly to the left by $\phi'$ (the diffeomorphism constructed from $\phi$ by composing with appropriate boundary twists in the proof of Theorem~\ref{corHFKrank}). Then the ``$\Big\lceil\frac{m}{2}\Big\rceil$" in the statement of Lemma~\ref{lem:atleasthalf} can be replaced by an $m$ and likewise for Theorem~\ref{corHFKrank}. In Lemma~\ref{lem:HFLbraidlift}, we will show this can be achieved for open books arising as the double branched covers of braids. We will apply this result to obtain Theorem~\ref{con:akh}, our rank bound in annular Khovanov homology.
\end{remark}

We conclude this subsection by discussing the annulus case. Conceptually this is no different from the higher genera, two boundary component cases.  In particular note that the technical lemmas at the beginning of this section apply for annuli. However, since the fractional Dehn twist coefficients of self-diffeomorphisms of annuli behave slightly differently than for higher genera --- in that a single Dehn twist about the annulus's core increases the FDTC by one on both boundary components (see \cite{ito2018fractional} Remark 4.3) --- the statement looks slightly different. Recall that every diffeomorphism of the annulus is some power of a positive Dehn twist, $\Delta_1=\Delta_2$, about the annulus' core.
\begin{proposition}\label{prop:annuluscase}
    Let $L_n$ denote the binding of an annular open book $A$ with monodromy $(\Delta_1)^n$. Then 
 \begin{align}\label{eq:annulus}
   \widehat{\HFL}(L_n,[A],i)&\cong \begin{cases}\F&\text{if $ i=\pm 1$}\\ \F^{\max\{2|n|,2\}}&\text{if $ i=0$}\\
        0&\text{otherwise.}\end{cases}
    \end{align}

\end{proposition}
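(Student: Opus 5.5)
The plan is to identify $L_n$ explicitly and compute directly, treating the general machinery of this section only as a consistency check. The binding of the annular open book with monodromy $\Delta_1^n$ is the $(2,2n)$-torus link $T(2,2n)$ in the lens space $L(n,1)$, with the orientation making the annulus a Seifert surface. For $n=0$ this is the two-component unlink-like link $S^1\times\{p,q\}$ in $S^1\times S^2$. For $n=\pm1$ it is the positive or negative Hopf link in $S^3$.

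The Alexander grading range is forced by earlier results. We have $G=g(A)+2-1=1$, so $\widehat{\HFL}$ is supported in gradings $-1\le i\le 1$. Since the link is fibered, the top and bottom gradings $\pm1$ each have rank one. It therefore remains to compute the rank in grading $0$.

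For this I would use the Heegaard diagram of Definition~\ref{def:HDadaptredtoL} adapted to $(A,\Delta_1^n)$.
\begin{itemize}
\item There is a single basis arc $a_1$, running from $\partial_1A$ to $\partial_2A$.
\item There is one necklace pair $\widetilde{\alpha}_1,\widetilde{\beta}_1$ around $\partial_1A$.
\item The Heegaard surface is a torus. The curve $\beta_1$ is $a_1\cup\overline{\Delta_1^n(a_1)}$ and wraps $|n|$ times around the core in the $-A$ half.
\end{itemize}
Generators then consist of one point of $\alpha_1\cap\beta_1$ and one point of $\widetilde\alpha_1\cap\widetilde\beta_1$.
\begin{itemize}
\item $\alpha_1\cap\beta_1$ consists of $c_1\in A$ and $|n|$ points $d_1,\dots,d_{|n|}\in -A$. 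For $n=0$ there is one extra point coming from the perturbation.
\item $\widetilde\alpha_1\cap\widetilde\beta_1$ has two points, one lying in each half of the surface.
\end{itemize}
By \cite[Lemma 4.2]{tovstopyat2018transverse}, the grading-$0$ generators are those with exactly one coordinate in $-A$. Counting them gives $|n|+|n|=2|n|$ generators for $n\ne0$, and $2$ for $n=0$. The diagram is nice in the sense of \cite{SarkarWang2010} after the standard positioning of the basepoints. I would then check, exactly as in the proof of Lemma~\ref{lem:identitycase}, that no empty bigon or rectangle connects two grading-$0$ generators without crossing a $z$ or $w$ basepoint. This gives $\rank=\max\{2|n|,2\}$ in grading $0$.

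The main obstacle is verifying that this differential vanishes. I would first try to avoid the domain chase by observing that the chain complex has total rank $2+\max\{2|n|,2\}$. That total must be at least $\rank\widehat{\HFK}$ of the knotification, and for the Hopf links and $T(2,2n)$ in $L(n,1)$ the latter is known to equal the generator count. This follows from the Künneth formula in the $n=0$ case, and in general from a genus-one lens space computation showing that every generator survives.

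As a cross-check, Proposition~\ref{thm:HFKaddtwists} applies for $|n|\ge 2$ with $\phi=\Delta_1^{n}$: the hypothesis about $\phi\circ\Delta_1^2$ holds for $n\le -2$. Each additional twist then adds a copy of $V$. Starting from the Hopf-link value $2$ at $|n|=1$, this reproduces $2|n|$, and the cases $n>0$ follow by Equation~\ref{eq:inverse}.
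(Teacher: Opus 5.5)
\textbf{The direct count in the adapted diagram, your main route, does not work.} In that diagram $\widetilde\alpha_1\cap\widetilde\beta_1$ has four points: two in $A$ and two in $-A$ (the $u_\pm$ and $y_\pm$ in the paper's figures). You have also dropped the generators that pair a point of $\alpha_1\cap\widetilde\beta_1$ with a point of $\widetilde\alpha_1\cap\beta_1$ (the $y_i$ and $x_i$ there).

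Consequently $\{c_1,y_+\}$, $\{c_1,y_-\}$, $\{x_1,y_2\}$ and $\{x_2,y_1\}$ already lie in grading $0$. Each point of $\alpha_1\cap\beta_1\cap(-A)$ contributes two more, paired with $u_\pm$. There are at least $|n|-1$ such points, so the grading-$0$ summand of $\widehat{\CFL}$ has rank at least $\max\{4,2|n|+2\}$, which exceeds $\max\{2|n|,2\}$. Hence the differential in grading $0$ is nonzero for every $n$, and no ``no empty bigons or rectangles'' check can succeed. Lemma~\ref{lem:identitycase} only shows that certain generators split off as a summand; it does not show the whole differential vanishes. Even on your own description the count would be $|n|+1$, not $2|n|$.

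The fallback is circular: that the rank of the knotification's $\widehat{\HFK}$ ``is known to equal the generator count'' is exactly what is to be proved. Separately, $L_n$ is not a $(2,2n)$ torus link. It is the pair of core circles of the genus-one splitting of $L(n,1)$, i.e.\ the lift of the axis of $\sigma_1^n$.

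\textbf{Your cross-check is a complete argument for $|n|\ge 1$ and should be your main argument.} It is precisely the paper's proof: the Hopf link gives rank $2$ at $|n|=1$, Proposition~\ref{thm:HFKaddtwists} applies to $\phi=\Delta_1^n$ for $n\le -2$, and Equation~\ref{eq:inverse} handles $n>0$.

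It cannot reach $n=0$, however. Proposition~\ref{thm:HFKaddtwists} never relates $L_0$ to any other $L_m$, because $\Delta_1^2$ and $\Delta_1$ are not weakly left-veering. For $n=0$ your proposal offers only the flawed count and an unexplained appeal to K\"unneth, so this case is a genuine gap.

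The paper closes it by applying Proposition~\ref{thm:HFKaddtwists1} to $\phi=\Delta_1^{-1}$; here $\phi\circ\Delta_1=\mathrm{id}$ sends $a_1$ weakly to the left. Writing $r=\rank\widehat{\HFL}(L_0,[A],0)$, this gives $\rank\widehat{\HFL}(L_{-1},[A],0)\in\{r,r+2\}$, so $r\le 2$. Lemma~\ref{lem:identitycase} gives $r\ge 2$.

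Alternatively, the knotification of $L_0$ is the binding of the once-punctured torus with identity monodromy. This is the Borromean knot in $\#^2(S^1\times S^2)$, whose $\widehat{\HFK}$ is $\F^4$ in Alexander gradings $1,0,0,-1$.
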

\noindent Recall here that $\F$ is the field with two elements.

\begin{proof}
    Consider first the $n=1$ case. This is the positive Hopf link, which is well known to have link Floer homology of the desired form. The $n>1$ cases then follow from repeated applications of Proposition~\ref{thm:HFKaddtwists}. The $n\leq -1$ cases follow from the symmetry properties of link Floer homology.

    For the $n=0$ case, observe that$$\widehat{\HFL}(L_{0},[\Sigma],i)\cong\begin{cases} \F&\text{for $|i|=1$}\\0&\text{for $|i|>1$}.\end{cases}$$

     since $L_0$ is fibered and bounds an annulus. Moreover, Proposition~\ref{thm:HFKaddtwists1} implies that; \begin{align*}   
   2=\rank(\widehat{\HFL}(L_1, [A],0))\geq \rank(\widehat{\HFL}(L_0,[A],0)).\end{align*}

\noindent The inequality above is tight by Lemma~\ref{lem:identitycase}, concluding the proof. \end{proof}

Let $\partial_{\pm}A$ be the boundary components of $A$. Since $\FDTC((\Delta_1)^n,\partial_\pm A)=n$, Equation~(\ref{eq:annulus}) can be restated as
    \begin{align}\label{eq:annulus2}
  \widehat{\HFL}(L_n,[A],i)\cong \begin{cases}\F&\text{if $ i=\pm 1$}\\ \F^{\max\{2|\FDTC((\Delta_1)^n,\partial_\pm A)|,2\}}&\text{if $ i=0$}\\
        0&\text{otherwise.}\end{cases}
    \end{align}

\section{Annular Khovanov homology}\label{sec:akh}

In this section we prove our rank bound for annular Khovanov homology in terms of the fractional Dehn twist coefficient, Theorem~\ref{con:akh}.
\subsection{A brief review of annular Khovanov homology}

Annular Khovanov homology is an invariant of links in the thickened annulus due to Asaeda, Przytyki, and Sikora~\cite{asaeda_categorification_2004} that generalizes Khovanov's original invariant for links in $\R^3$~\cite{khovanov2000categorification}. It assigns to each such link  a finitely generated module, which we denote $\AKh(L)$. We will take coefficients in $\Z/2$ unless otherwise stated. $\AKh(L)$ comes equipped with a $\Z$-valued \emph{annular grading}. The annular gradings in which $\AKh(L)$ is supported are of the same parity. We denote the annular grading $m$ portion of annular Khovanov homology by $\AKh(L,m)$. If $\widehat{\beta}$ is an $n$-braid closure then a straightforward computation shows that the maximum annular grading in which $\AKh(\widehat{\beta})$ is non-trivial is $n$, and indeed that the rank there is one. In this section we will be interested in the next to maximum grading in which $\AKh(\widehat{\beta})$ is non-trivial for an $n>1$-braid closure $\widehat{\beta}$, namely $n-2$.

We can alternatively view annular links as links in the complement of an unknot $A$ in $S^3$. (If $L$ is the closure of a braid, then $A$ is the braid axis.) Let $\Sigma(L)$ denote the double branched cover of $S^{3}$ branched over $L$, and let $\widetilde{A}$ denote the lift of $A$ to $\Sigma(L)$. Let $\widetilde{D}$ denote the lift of a disk $D$ bounded by $A$. There is a spectral sequence from the annular Khovanov homology of an annular link $L$ to (a version of) the knot Floer homology of $\widetilde{A}$ relative to the homology class of $\widetilde{D}$. 
 Roberts~\cite{roberts_knot_2013} proved this in the odd index case and Grigsby and Wehrli extended this result to the general case~\cite[Theorem 2.1]{grigsby_khovanov_2010}. Note that while \cite[Theorem 2.1]{grigsby_khovanov_2010} refers to sutured Floer homology rather than knot Floer homology, but \cite[Proposition 2.24]{grigsby_khovanov_2010} demonstrates that these are, up to some simple modifications, equivalent in the setting at hand. 
 
Let $\beta$ be a braid of index $n$. If $n=1$ --- i.e. for the unique $1$-stranded braid --- we have that $\rank(\AKh(\widehat{\beta};\Z))=2$, with support in annular gradings  $\pm1$. A consequence of the spectral sequence described above is that if $n>1$ is odd then (see~\cite[Theorem 2.1, Proposition 2.24]{grigsby_khovanov_2010}):
\begin{equation}\label{eq:oddindex}
\rank\big(\AKh(\widehat{\beta},n-2)\big)\geq \rank\Big(\widehat{\HFL}\Big(\widetilde{A},\Sigma(\widehat{\beta}),[\widetilde{D}],\frac{n-3}{2}\Big)\Big)+1,
\end{equation}

\noindent while if $n$ is even index then:
\begin{equation}\label{eq:evenindex}
\rank\big(\AKh(\widehat{\beta},n-2)\big)\geq \rank\big(\widehat{\HFL}(\widetilde{A},\Sigma(\widehat{\beta}),[ \widetilde{D}],\frac{n-2}{2})\big).
\end{equation}

\noindent The reason these rank bounds depend on the parity of the braid index is clarified in~\cite[Proposition 2.24 ]{grigsby_khovanov_2010} . 

\subsection{The rank bound}

Our goal in this section is to prove the following:

\akh*

For the even $n$ case we need a slightly stronger version of Theorem~\ref{corHFKrank} under slightly stronger hypotheses. 
\begin{lemma}\label{lem:HFLbraidlift}
    Suppose $\widetilde{A}$ is the lift of the braid axis to the double branched cover of a $2n$-braid given by $(D,\beta)$ with $n\geq 1$. Then : \begin{align}\label{eq6.11}\rank(\widehat{\HFL}(\widetilde{A},\Sigma(\widehat{\beta}),[\widetilde{D}],1-G(\widetilde{D})))\geq 2+\max\{0,4\lfloor|\FDTC(\beta)|\rfloor-4\}.\end{align}

\end{lemma}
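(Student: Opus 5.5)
The plan is to run the proof of Theorem~\ref{corHFKrank} again in the special case at hand, using Remark~\ref{rmk:nociels} to improve the base case. For a $2n$-braid the page $\Sigma$ of the lifted open book is the double branched cover of $D_{2n}$. It has two boundary components $c_1,c_2$, and $g(\Sigma)=n-1$. The monodromy $\phi$ is the lift of $\beta$, and by Ito--Kawamuro $\FDTC(\phi,c_1)=\FDTC(\phi,c_2)=\FDTC(\beta)$ when $2n>2$.

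The case $n=1$, i.e.\ a $2$-braid, is handled directly. Here $\Sigma$ is an annulus and $\phi=\Delta_1^k$, and Proposition~\ref{prop:annuluscase} gives rank $\max\{2|k|,2\}$. This is at least $2+\max\{0,4|k|-4\}$ only when $|k|\le 1$, so for $|k|\geq 2$ I would have to check the normalization of $\FDTC$ under the $2$-fold cover. The lift of $\sigma_1^k$ is $\Delta^{k}$, while $\FDTC(\sigma_1^k)=k/2$. The bound then reads $2+\max\{0,4\lfloor |k|/2\rfloor-4\}\le 2|k|$, which holds.

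For $n\ge 2$, choose the symmetric, reflection-friendly decomposition. Write $\beta=\Delta_{D}^{2k}\beta'$ with $\Delta_D^2$ the full twist, $k=\lfloor\FDTC(\beta)\rfloor$ (or its ceiling, after mirroring so that $\FDTC\le 0$ as in the proof of Theorem~\ref{corHFKrank}), and $|\FDTC(\beta')|<1$. The full twist lifts to $\Delta_1\Delta_2$, so $\phi=\Delta_1^{k}\Delta_2^{k}\phi'$, where $\phi'$ is the lift of $\beta'$ and $|\FDTC(\phi',c_i)|<1$.

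Next, apply Proposition~\ref{thm:HFKaddtwists} at least $|k|-1$ times on each of the two boundary components, together with at most one application of Proposition~\ref{thm:HFKaddtwists1} per component. Each application of Proposition~\ref{thm:HFKaddtwists} contributes a copy of $V$. The hypothesis of Proposition~\ref{thm:HFKaddtwists} (that $\phi\circ\Delta_i^2$ sends an arc weakly left) holds at each stage because the FDTC of the intermediate map at $c_i$ is at most $-2$ there, using Kazez--Roberts. Together these give a contribution of $2\cdot 2(|k|-1)=4|k|-4$.

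For the base case I would show that the rank for $\phi'$ is at least $2$, and this is the main obstacle. It is exactly Remark~\ref{rmk:nociels} with $m=2$: I need a basis of arcs in which, for each of $c_1,c_2$, some arc abutting that component is sent strictly to the left by $\phi'$, rather than just one side as in Lemma~\ref{lem:atleasthalf}. The approach is to take lifts of arcs in $D_{2n}$ from the outer boundary to punctures. Each such arc lifts to a single arc joining $c_1$ to $c_2$, and the covering involution swaps the two boundary components. So if $\beta'$ sends a boundary-to-puncture arc $\tau$ strictly left, its lift $\tilde\tau$ is sent strictly left at both endpoints simultaneously. If $\beta'$ sends no arc strictly left, I would pass to the mirror, which is allowed by Equation~\ref{eq:invariance}. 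If $\beta'$ is trivial, Lemma~\ref{lem:identitycase} gives rank at least $2(g+1)\ge 2$.

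Finally, extend $\tilde\tau$ to a basis of arcs for $\Sigma$ by lifting a standard arc system. The generators $\bm{d}_1,\bm{d}_2$ then come from the two endpoints of the same arc, or from two arcs if needed. I expect the delicate point to be verifying that $\partial^*_1$ and $\partial^*_2$ still separate $\bm{d}_1$ and $\bm{d}_2$ when the witnessing arc is shared. I would first try using two distinct lifted arcs, $\tilde\tau$ and the lift of a neighbouring arc $\tau'$, chosen so that each is adjacent to only one necklace region in the diagram of Definition~\ref{def:HDadaptredtoL}. This argument would also give Remark~\ref{rmk:nociels} for such lifts. Adding the base case of $2$ to the $4|k|-4$ from the twist steps yields \eqref{eq6.11}.
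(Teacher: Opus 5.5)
Your argument is the paper's. A boundary-to-puncture arc sent strictly left by $\beta'$ (or by $\beta'^{-1}$) lifts to an arc sent strictly left at both $c_1$ and $c_2$. The proof of Lemma~\ref{lem:atleasthalf} then gives rank at least $2$, with Lemma~\ref{lem:identitycase} covering trivial $\beta'$. The twist iteration from the proof of Theorem~\ref{corHFKrank}, together with Ito--Kawamuro, supplies the rest.

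Your separate treatment of $n=1$ through Proposition~\ref{prop:annuluscase}, using $\FDTC(\phi)=2\FDTC(\beta)$ on the annulus, is correct. It covers a case the paper's proof skips, since the paper invokes Ito--Kawamuro ``since $n>1$''. The shared-arc worry is handled exactly as in Lemma~\ref{lem:atleasthalf}: the single lifted arc gives two generators, one for each endpoint, detected by $\partial_1^*$ and $\partial_2^*$ respectively. So no second arc is needed, and a neighbouring lifted arc need not be sent left in any case.

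The step that fails as you justify it is the claim that the hypothesis of Proposition~\ref{thm:HFKaddtwists} holds at all $|k|-1$ stages. At the last stage, the map $\psi\circ\Delta_i^2$ in that hypothesis is $\phi'$ up to twists about the other boundary component. Its $\FDTC$ at $c_i$ is therefore $\FDTC(\phi',c_i)\in(-1,0]$. A negative value forces strict left-veering, so those cases are fine. But when $\FDTC(\beta)\in\Z$ the value is exactly $0$, and $\FDTC=0$ does not produce an arc from $c_i$ to the other boundary component that is sent weakly left.

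For a concrete failure, take the $4$-braid $\beta=\Delta_D^{-2m}\sigma_1^2\sigma_3^2$ with $m\geq 2$. Then $\FDTC(\beta)=-m$ and $\phi'=T_{\gamma_1}^2T_{\gamma_2}^2$, where $\gamma_1,\gamma_2$ are the lifts of the curves around punctures $\{1,2\}$ and $\{3,4\}$. These curves jointly separate $c_1$ from $c_2$ in the genus one page. Hence every $c_1$-to-$c_2$ arc is sent strictly right at both ends, and this persists after composing with twists about the other component.

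In such cases you get only $|k|-2$ applications per component. Proposition~\ref{thm:HFKaddtwists1} adds nothing at the next step, and the base case must be taken one twist before $\phi'$. The twisting then contributes $4|k|-8$ rather than $4|k|-4$.

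This hole is not yours alone. The paper's proof defers to the proof of Theorem~\ref{corHFKrank}, which asserts the $|k_i|-1$ applications without checking the hypothesis. Closing it needs a separate argument for integer $\FDTC(\beta)$ in the case where $\phi'$ sends every $c_i$-to-$c_j$ arc strictly right.
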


 This lemma follows from arguments similar to those applied in the proofs of Lemma~\ref{lem:atleasthalf} and Theorem~\ref{corHFKrank}. 

\begin{proof}
Suppose first that $\beta$ is not the identity. Pick a basis of arcs $a_i$ for $D$ where at least one of them is sent strongly to the left (perhaps after reversing orientation). This basis lifts to a basis of arcs for $\Sigma$ in the double branched cover containing an arc which is sent strongly to the left at both endpoints. In particular, in the notation of Lemma~\ref{lem:atleasthalf}, we have that $\rank(\widehat{\HFL}(\partial\Sigma,[\Sigma],G-1))\geq 2$. This rank bound also holds true in the case that $\beta$ is the identity, by Lemma~\ref{lem:identitycase}.

We can now apply the rest of the argument in the proof of Theorem~\ref{corHFKrank} to deduce that
 \begin{align*}
    \rank\big(\widehat{\HFL}(\widetilde{A}, {[\Sigma(\widehat{\beta})]},G(\widetilde{D})-1))\big)\geq 2+\underset{i\in \{1,2\}}{\sum}2\max\big\{\big(\big\lfloor|\FDTC(\phi,\widetilde{A}_i)|\big\rfloor-1\big),0\big\}.\end{align*}
 
 \noindent Here $\widetilde{A}_i$ are components of the lift of $A$, and $\phi$ is the monodromy of the open book with page $\widetilde{D}$. Since $n>1$ we have that $\FDTC(\phi,\widetilde{A}_i)=\FDTC(\beta)$ for each $i$, by~\cite[Theorem 4.2]{ito2018fractional}. \end{proof}

With this Lemma in hand, we can proceed to the proof of the main Theorem of this section.

\begin{proof}[Proof of Theorem~\ref{con:akh}]
 Suppose $\beta$ is an $n$-braid as in the statement of the Theorem. We have two cases according to whether $n$ is odd or even.
 
 If $n$ is even, then $\widetilde{A}$ has two components $\widetilde{A}_1$ and $\widetilde{A}_2$, and --- provided that the Euler characteristic of the branched cover of the disk is strictly negative, i.e. $n>2$ --- the fractional Dehn twist coefficients of each component is exactly the fractional Dehn twist coefficient of $\beta$~\cite[Theorem 4.2]{ito2018fractional}. Thus Equation~(\ref{eq:evenindex}) and Lemma~\ref{lem:HFLbraidlift} imply that:
\begin{align*}\rank(\AKh(\widehat{\beta}, n-2))&\geq \rank\big(\widehat{\HFL}(\widetilde{A},\Sigma(\widehat{\beta}),[\widetilde{D}],\dfrac{n-2}{2})\big)\\
&\geq \max\{2,4\big\lfloor|\FDTC(\beta)|\big\rfloor-2\},
\end{align*}
as desired.

The $n=2$ case can be treated similarly. Here the fractional Dehn twist of $\widetilde{A}_1$ and $\widetilde{A}_2$ are both twice the fractional Dehn twist coefficient of $\beta$, so Equation~(\ref{eq:evenindex}) and Proposition~\ref{prop:annuluscase} imply that:
\begin{align*}
\rank(\AKh(\widehat{\beta}, 0))&\geq \rank(\widehat{\HFL}(\widetilde{A},\Sigma(\widehat{\beta}),[ \widetilde{D}],0)) \\&=\max\{2,4|\FDTC(\beta)|\}\\
&\geq \max\{2,4\big\lfloor|\FDTC(\beta)|\big\rfloor-2\}.
\end{align*}

The result in the odd index case holds by a similar argument. The fractional Dehn twist of the open book corresponding to the double branched cover of an odd-index braid is half that of the braid~\cite[Theorem 4.2]{ito2018fractional}.  Thus Equation~(\ref{eq:oddindex}) and Proposition~\ref{prop:HFKsingleboundarycomponent} imply that:
\begin{align*}
\rank(\AKh(\widehat{\beta}, n-2))&\geq 1+ \rank\Big(\widehat{\HFL}(\widetilde{A},[ \widetilde{D}],\dfrac{n-2}{2})\Big)\\
&\geq 2+2\max\{\big\lfloor|\FDTC(\widetilde{A})|\rfloor-1,0\}\\& =\max\Big\{2\Big\lfloor\Big|\frac{\FDTC(\beta)}{2}\Big|\Big\rfloor,2\Big\},\end{align*}concluding the proof.\end{proof}

Note that the $n=2$ case in the proof is somewhat redundant; $\AKh(-,\Z/2)$ can be computed by hand for $2$-braids following, say, the computation of $\AKh(-,\C)$ in~\cite[Section 9.3]{grigsby_annular_2018}. We conclude this section with a stronger bound for certain special braids:

\begin{proposition}\label{prop:braidfulltwist}
  Let $\Delta_m$ denote the $m>1$-braid given by a full right handed Dehn twist. If $m$ is even then $\rank(\AKh(\widehat{\Delta^n_m},m-2))\geq m+\max\{0,4 |n|-4\}$.
\end{proposition}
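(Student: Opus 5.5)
We follow the same route as the even case of Theorem~\ref{con:akh}: pass to the double branched cover and apply the link Floer bound for products of boundary twists. The gain comes from the base case, which is now the identity, so Lemma~\ref{lem:identitycase} applies in place of Lemma~\ref{lem:atleasthalf}.

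First we identify the open book. The double branched cover of $D^2$ branched over $m$ points ($m$ even) is a surface $\Sigma$ with two boundary components. By Riemann--Hurwitz, $\chi(\Sigma)=2-m$, so $g(\Sigma)=(m-2)/2$. Hence
\[
G=g(\Sigma)+|\partial\Sigma|-1=\frac{m}{2},\qquad 2\bigl(g(\Sigma)+|\partial\Sigma|-1\bigr)=m .
\]
The full twist $\Delta_m$ is the boundary-parallel Dehn twist on $D_m$. Its lift to $\Sigma$ is a product of powers of the two boundary Dehn twists $\Delta_1,\Delta_2$ of $\Sigma$, up to isotopy rel boundary. We want to see that the lift of $\Delta_m^n$ is $\Delta_1^{n_1}\Delta_2^{n_2}$ with $|n_i|=|n|$. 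Here are two ways to check this.

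\begin{itemize}
\item Directly: the preimage of a boundary-parallel curve in $D_m$ is two boundary-parallel curves in $\Sigma$, each mapping by degree one, since the branch points lie inside.
\item Via \cite[Theorem 4.2]{ito2018fractional}: it gives $\FDTC(\phi,c_i)=\FDTC(\Delta_m^n)=n$ for $m>2$. A lift of a boundary twist that restricts to a degree-one cover near each boundary component is itself the boundary twist there, so $n_1=n_2=n$.
\end{itemize}

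The case $m=2$ is the annulus. There Proposition~\ref{prop:annuluscase} gives rank $\max\{2|n|,2\}$ after accounting for the FDTC doubling, and we check directly that this is at least $2+\max\{0,4|n|-4\}$. This is the one point to watch: we will need to verify which twisting the lift actually has in the annular case, and if the lift is $\Delta_1^{2n}$ then the annulus formula gives $4|n|$, which suffices.

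For $m>2$, $\Sigma$ is neither a disk nor an annulus, so Lemma~\ref{prop:HFKintegerFDTC} applies to $\bm{\Delta}=\Delta_1^{n}\Delta_2^{n}$. It gives
\[
\rank\widehat{\HFL}(\widetilde{A},[\widetilde{D}],\cdot)\geq m+2\cdot 2\max\{|n|-1,0\}=m+\max\{0,4|n|-4\}
\]
in the next-to-extremal Alexander grading. By the symmetry~\eqref{eq:invariance}, this is grading $\pm(G-1)=\pm\frac{m-2}{2}$. Finally, Equation~\eqref{eq:evenindex} transfers the bound to $\rank\AKh(\widehat{\Delta_m^n},m-2)$, which gives the statement.

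The main obstacle is the identification of the lift as a pure product of boundary twists with exponent exactly $n$ on each component, rather than merely having FDTC equal to $n$. Lemma~\ref{prop:HFKintegerFDTC} needs the monodromy to be literally $\prod\Delta_i^{n_i}$, because its base case $\phi'=\mathbbm{1}$ is what lets Lemma~\ref{lem:identitycase} contribute $m$ instead of $\lceil m/2\rceil$. We will settle this by the direct covering-space argument in the first bullet above.
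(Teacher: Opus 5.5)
Your proposal is correct and follows the paper's route: pass to the double branched cover, apply Lemma~\ref{prop:HFKintegerFDTC} with the identity as base case, then transfer via Equation~\eqref{eq:evenindex}. Your covering-space check that the lift is literally $\Delta_1^{n}\Delta_2^{n}$, and your separate treatment of $m=2$ (which Lemma~\ref{prop:HFKintegerFDTC} excludes), fill in points that the paper's one-line proof leaves implicit. In the annulus case, though, drop the $\max\{2|n|,2\}$ claim, which fails for $|n|\geq 2$. Your own first-bullet argument shows the lift of $\Delta_2^n$ is the $2n$-th power of the core twist, so Proposition~\ref{prop:annuluscase} gives $\max\{4|n|,2\}$, and that suffices.
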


\begin{proof}
Observe that the lift of the axis to the double branched cover of $\widehat{\Delta^n_m}$ is a fibered two component link of genus $\frac{m-2}{2}$. The result now follows directly from Lemma~\ref{prop:HFKintegerFDTC} and Equation~\ref{eq:evenindex}.
\end{proof}

\section{Examples and Questions}\label{sec:examples}

    In this section we provide a few groups of examples that may be of interest and pose some questions. We do not have examples where any of our rank bounds from Theorems~\ref{con:akh}, \ref{prop:HFKsingleboundarycomponent}, and \ref{corHFKrank} are tight, but we are limited substantially by the computational complexity of the corresponding calculations, so our search for such examples can by no means be considered exhaustive. We insted provide families of examples for both annular Khovanov homology and knot and link Floer homology that show that our rank bounds can be arbitrarily bad. 
   
    \subsection{The full twist in the braid group}

    Recall that $2$-stranded braids are given by $\sigma_1^n$ for some $n$. Our lower bound on $\rank(\AKh(\widehat{\sigma_1^{2m}},0))$ from Theorem~\ref{con:akh}, $\max\{2,4|m|-2\}$ is not tight, as the rank is in fact $\max\{2,4|m|\}$. To see this, one could give a complete computation of $\AKh(\widehat{\sigma_1^{m}};\Z/2)$ using a computation similar to that given by Grigsby-Licata-Wehrli in the complex coefficient case~\cite[Section 9.3]{grigsby_annular_2018}. Alternatively, $\rank(\AKh(\widehat{\sigma_1^{m}},0;\Z/2))))$ can be computed by combining Proposition~\ref{prop:annuluscase} and Equation~\ref{eq:evenindex}.

    For full twists on more strands, the bound continues to be non-tight, and the gap appears to widen. Using Hunt, Keese, Licata and Morrison's computer program (see \cite{hunt2015computing}) we find that the full twist on $3$ strands has annular Khovanov homology of rank $7$ (with complex coefficients) in annular gradings $\pm 1$. On the other hand, the lower bound from Theorem~\ref{con:akh} evaluates to $2$.

\subsection{Split sums of braids}
    
    The rank bound in annular Khovanov homology can be arbitrarily bad. Let $\beta_1$ be an $n$-braid and $\beta_2$ be an $m$-braid. The split sum of two braids has index $n+m$. Moreover, by the K\"unneth formula for the split sum of annular links and the fact that braids have annular Khovanov homology of rank one in the maximal non-trivial $k$-gradings we have that  \begin{align*}
        \rank(\AKh(\widehat{\beta_1\sqcup\beta}_2;\Z,k=n+m-2))=\rank( \AKh(\widehat{\beta}_1),k=n-2)+\rank(\AKh(\widehat{\beta}_2;\Z,  m-2)).
    \end{align*}

    \noindent Thus for appropriate choices of $\beta_1$ and $\beta_2$ we have that $\rank(\AKh(\widehat{\beta_1\sqcup\beta}_2;\Z, n+m-2))$ can be made arbitrarily large. On the other hand $\beta_1\sqcup\beta_2$ has fractional Dehn twist coefficient zero. This is because the properly embedded arc separating the first $n$ punctures from the remaining $m$ punctures is fixed by $\beta_1\sqcup\beta_2$.

    \subsection{Knot and Link Floer homology} The rank bounds in Theorem~\ref{corHFKrank} can also be arbitrarily bad. Consider, for example, the $n$-fold connect sums of the figure eight knot, $\#^{n}(K)$. The K\"unneth formula for knot Floer homology implies that $\rank(\widehat{\HFL}(\#^{n}(K),G-1))=3n$, but $\FDTC(\#^{n}(K))=0$ since the arc on the fiber surface cutting across the connected sum band is fixed by the monodromy. Similar examples hold for link Floer homology by taking connected sums on each link component and applying the K\"unneth formula. 
    More generally, note that for fibered knots in $S^3$, the rank bound is trivial since Gabai-Ortell proved that in this case $|\FDTC(\phi)|<\frac{1}{2}$~\cite{gabai1989essential}, while as seen above $\rank(\widehat{\HFL}(K,G-1))$ can be arbitrarily large in general.

\subsection{Questions} We suspect better general rank bounds are possible for both homology theories. We note that annular Khovanov homology is an invariant that is particularly well suited to studying braids, so it is natural to ask:

\begin{question}
    Is there an intrinsic proof of the annular Khovanov homology lower rank bound that does not require using link Floer homology?
\end{question}

In the absence of a proof intrinsic to Khovanov homology, we can still ask for a more natural proof as follows:

\begin{question}
    There is another invariant of annular links, \emph{annular instanton Floer homology} and a spectral sequence from the annular Khovanov homology to annular instanton Floer homology, both due to Xie~\cite{xie2021instantons}. Is there a proof of the annular Khovanov homology rank bound via annular Xie's annular instanton Floer homology? Or, more precisely, can one show that the rank of the next to top annular grading of annular instanton Floer homology of a braid closure is bounded below by a function of the fractional Dehn twist coefficient of the braid?
\end{question}

We conclude with:

\begin{question}
    Let $\widehat{\beta}$ be a braid closure in an abstract open book $(\Sigma,\phi)$. Is there a version of the Heegaard Floer homology results in this paper for the next to top grading of $\widehat{\HFL}(\widehat{\beta}\cup\partial\Sigma,[\Sigma])$?
\end{question}

\begin{question}
 Can the strategy applied in Section~\ref{sec:HFL} be used to generalize Hedden and Mark's result that there is a uniform bound on the fractional Dehn twist coefficient of fibered knots in any fixed $3$-manifold~\cite{HeddenMark} to the case of fibered links?
\end{question}

\bibliographystyle{alpha}
\bibliography{bibliography}

\end{document}